\providecommand{\U}[1]{\protect\rule{.1in}{.1in}}
\providecommand{\U}[1]{\protect\rule{.1in}{.1in}}
\newcommand{\IR}{{\mathbb{R}}}
\newcommand{\IN}{{\mathbb{N}}}
\newcommand{\IB}{{\mathbb{B}}}
\newcommand{\BE}{\begin{equation}}
\newcommand{\EE}{\end{equation}}
\numberwithin{equation}{section}
\newtheorem{definition}{Definition}[section]
\newtheorem{theorem}{Theorem}[section]
\newtheorem{lemma}{Lemma}[section]
\newtheorem{algorithm}{Algorithm}[section]
\newtheorem{proposition}{Proposition}[section]
\newtheorem{remark}{Remark} [section]
\newtheorem{corollary}{Corollary}[section]
\begin{document}

\begin{CJK*}{GBK}{}

\title{\textbf{Convergence Analysis of Gradient Algorithms on Riemannian Manifolds Without Curvature Constraints and Application to Riemannian Mass}}
\date{}

\author{Chong Li\thanks{School of Mathematical Sciences, Zhejiang University, Hangzhou 310027,
P. R. China (cli@zju.edu.cn). Research of this author was supported in part by the National Natural Science Foundation of China
(grant number 11571308, 11971429) and Zhejiang Provincial Natural Science Foundation of China (grant numbers LY18A010004).}
\and Xiangmei Wang\thanks{College of Mathematics and Statistics, Guizhou University, Guiyang 550025, P. R. China (xmwang2@gzu.edu.cn). Research of this author was supported in part by the National Natural Science
Foundation of China (grant numbers 11661019) and Guizhou Provincial Natural Science Foundation of China (grant number 20161039).}
\and Jinhua Wang\thanks{Department of Mathematics,
Zhejiang University of Technology, Hangzhou 310032, P R China
 (wjh@zjut.edu.cn).  Research of this  author  was supported in part by the National Natural Science
Foundation of China (grant numbers 11771397) and Zhejiang Provincial Natural Science Foundation of China (grant number  LY17A010021).}
\and Jen-Chih Yao\thanks{China Medical University, Taichung, Taiwan 40402, R.O.C. (yaojc@mail.cmu.edu.tw). Research of this author was supported in part by the Grant MOST 108-2115-M-039-005-MY3.}
}

\maketitle

{\noindent\textbf{Abstract}} \textit{We study the  convergence issue for the gradient algorithm (employing general step sizes) for optimization problems on general Riemannian manifolds  (without curvature constraints). Under the assumption of the local convexity/quasi-convexity (resp. weak sharp minima), local/global convergence  (resp. linear convergence) results are established. As an application, the linear convergence properties of the gradient algorithm  employing the constant step sizes and the Armijo step sizes for finding the Riemannian $L^p$ ($p\in[1,+\infty)$) centers of mass are explored, respectively, which in particular extend and/or improve the corresponding results in \cite{Afsari2013}.}
%

\bigskip
{\noindent\textbf{Keywords}} \textit{Riemannian manifold; sectional curvature; gradient  algorithm; local convergence; global convergence; linear convergence; Riemannian center of mass}

AMS subject classifications. {53C20; 53C22}

\section{Introduction}

Let $f:M\rightarrow\overline{\IR}$ be a locally Lipschitz continuous function defined on a Riemannian manifold $M$. The following optimization problem:
\begin{equation}\label{P-1.1}
\min_{x\in M}f(x)
\end{equation}
has been  extensively studied in the literature, which not only has applications in various areas, such as computer vision, machine learning
system balancing, electronic structure computation, model reduction and robot manipulation,
low-rank approximation (see, e.g., \cite{Absil2008,Adler2002,Smith1994} and the references therein),  but also is a useful tool to treat some nonsmooth/nonconvex and/or constrained  optimization problems appeared on the Euclidean space.  As explained in  \cite{Gabay1982},  the Riemannian geometry framework can be used to decrease/overcome the difficulties caused by nonsmoothness/constaints and to enhance the performances of numerical methods by exploiting the intrinsic reduction of the dimensionality of the problem and the method's insight about the problem structure;
see also  \cite{Adler2002,Afsari2013,Bento2012M,Burke2001,Li2009,LiMWY2011,LiY2012,Wang2010} and the references therein for more details.
 One of the most typical and important examples is  the well-known  problem of finding  the Riemannian $L^p$ centers of mass of given points $\{y_i:1\le i\le N\}\subseteq M$, which can be formulated as a special case of problem \eqref{P-1.1} with the objective function $f$ defined by
\begin{equation}\label{LP-def-intro}
f(x):=
\left\{
\begin{array}{ll}
\frac{1}{p}\sum_{i=1}^{N}w_i{\rm d}^p(x,y_i),\;1\le p<+\infty\\
\max_{1\le i\le N}{\rm d}(x,y_i),\;\;p=+\infty\\
\end{array}
\right.
\quad\mbox{for any }x\in M,
\end{equation}
where $\{w_i:1\le i\le N\}\subseteq (0,+\infty)$ are the weights. This problem has various applications in the field of general data analysis, including computer graphics and animation, statistical analysis of shapes, medical imaging and sensor networks (see, e.g, \cite{Afsari2013,Groisser2004} and the references therein). As mentioned in \cite{Afsari2010}, the first study of the problem could be traced back to 1920s (the work due to Cartan) regarding the existence and uniqueness issue of the Riemannian $L^2$ centers of mass on Hadamard manifolds. After that, this problem was extensively studied in the literature, including more general existence and
uniqueness results for the Riemannian $L^p$ centers of mass and some methods for locating the Riemannian centers of mass such as
gradient algorithm, subgradient algorithm, stochastic gradient algorithm, Newton's method; see, e.g., \cite{Afsari2010,Afsari2013,Arnaudon2012,Groisser2004,YangL2010}.

Related to the optimization problem \eqref{P-1.1}, some important  notions and techniques,  such as weak sharp minima and variational analysis, have been developed in \cite{LiMWY2011,LiY2012}; while the classical numerical  methods  for solving optimization problems on the Euclidean space,  such as Newton's method, gradient algorithm, subgradient algorithm, trust region method, proximal point method, etc., have been extended to the Riemannian manifold setting; see, e.g., \cite{Absil2007,Gabay1982,Smith1994,WJHJG2015,WLLY2015,WangOptim2018}.
In the present paper,  we are particularly interested in the
gradient algorithm, which is    one of the most classical and important numerical algorithms for solving problem \eqref{P-1.1}.


The original idea of the gradient algorithm dates back to at least  the work in 1972 due to   Luenberger \cite{Luenberger1972}, where
 the gradient projection method employing the exact line search  carried out along a geodesic was proposed for solving the constrained optimization problem on the Euclidean space, that is, problem \eqref{P-1.1} with   $M:=\{x\in\IR^n:h(x)=0\}$ and $h:\IR^n\rightarrow\IR$ being also continuously differentiable;  the global (linear) convergence   results were established under the assumption that  the Hessian of the corresponding Lagrangian function $f(\cdot)+\lambda h(\cdot)$ (in the sense of the Euclidean setting) is uniformly  bounded and uniformly   positive definite on all   tangent subspaces; see \cite[Theorem 1]{Luenberger1972} for more details.
 This work was developed by Gabay in \cite{Gabay1982} with the weaker assumption  that the sub-level set of $f$ associated to $f(x_0)$ is bounded and the values of $f$ at all critical points are distinct; moreover the linear convergence rate is estimated under the assumptions that $f$ is  third continuous differentiable and that the generated sequence converges to a critical point at which the Hessian form of $f$ is positive definite (see \cite[(57)]{Gabay1982} for the definition of the Hessian form).

One important development in this direction is the work of  Smith in  \cite{Smith1994}, where he developed the gradient
algorithm (together with other algorithms such as Newton-type algorithm   and the conjugate gradient algorithm) for solving problem \eqref{P-1.1},  with $f$ being continuously  differentiable on a general Riemannian manifold. By using the pure differential geometry language (which is free from local coordinate systems), he obtained the linear convergence result for the gradient algorithm (employing the exact line search) 
in the case when the generated sequence converges to a nondegenerate point; see \cite[Theorem 2.3]{Smith1994}. 
Later, Yang studied the gradient algorithm employing
the  Armijo step sizes on a general Riemannian manifold, and established in \cite[Theorem 3.4]{Yang2007} 
the  global convergence result 
under the assumption that the generated sequence $\{x_k\}$ satisfies $\lim_{k\rightarrow+\infty}{\rm d}(x_k,x_{k+1})=0$ and has a cluster point $\bar x$ such that $\bar x$ is an isolated critical point, and in \cite[Theorem 4.1]{Yang2007}, the linear convergence
result 
under the assumption that the generated sequence
converges to a nondegenerate point.

To relax the isolatedness assumption for the cluster points of the generated sequence, the following two
   crucial assumptions   
   were introduced  in \cite{DAO1998} and  \cite{Papa2008}
   to establish the global convergence results for the gradient algorithm  (employing the Armijo step sizes) for the convex case and quasi-convex case, respectively:
\begin{description}
    \item [{\bf (A1)}]The curvatures of the Riemannian manifold $M$ are nonnegative.
  \item [{\bf (A2)}]The function $f$ is continuously  differentiable and  convex/quasi-convex  on the whole manifold $M$.\\
\end{description}
As explained in the following,  either   assumption  (A1) or (A2) is clearly too stringent.
\begin{itemize}
  \item  Assumption (A1) prevents the application to a class of Hadamard manifolds including 
  the Poincar\'{e} plane,
  hyperbolic spaces $\mathbb{H}^n$, 
  and the 
  symmetric positive definite matrix manifolds $\mathbb{R}_{++}^n$. 
  \item Assumption (A2) prevents the application to some special but important Riemannian manifolds, such as compact Stiefel manifolds ${\rm St}(p,n)$ and Grassmann manifolds ${\rm Grass}(p,n)$ ($p<n$) since there is no non-trivial (quasi-)convex function (with full domain) on a complete manifold with finite volume (see, e.g., \cite{Yau1974}).
   \item    Assumption (A1)/(A2) prevents the application  to the problem of
   the Riemannian $L^p$ centers of mass as, in general,
         the function $f$ defined by \eqref{LP-def-intro} is neither necessarily quasi-convex nor differentiable in the case when $p=1$  on the underlying Riemannian  manifolds. 
\end{itemize}


Our main purpose in the present paper is to deal with the more general case in which $M$ is not necessarily of   curvatures bounded from below
and the function $f:M\rightarrow\overline{\IR}$ is locally Lipschitz continuous on its domain (and so not necessarily continuously  differentiable, or quasi-convex/convex on the whole Riemannian manifold). More precisely, we establish the global convergence result for the gradient algorithm employing more general step sizes (which includes the Armijo step sizes   as a special case) under the following weaker assumption than (A1\& A2):
\begin{itemize}
\item The generated sequence $\{x_k\}$   has a cluster point $\bar x$ such that $\bar x$ is a critical point of $f$ and $f$ is quasi-convex around $\bar x$.
\end{itemize}
 Moreover, if  the following  assumption is additionally assumed,  we further show that the sequence $\{x_k\}$ converges linearly to a local solution:


\begin{itemize}
   \item  The cluster point  $\bar x$ is a local  weak  sharp minimizer of order $2$ for problem \eqref{P-1.1},  $f$ is convex around $\bar x$, and the step size sequence $\{t_k\}$ has a positive lower bound.
\end{itemize}
As explained before Theorem \ref{full-1} and Corollary \ref{full-AG}, the linear convergence result
 extends 
 \cite[Theorem 4.1]{Yang2007}; while the global convergence result extends/improves particularly the corresponding ones in \cite[Theorem 3.1]{Papa2008} (and so \cite[Theorem 5.3]{DAO1998}).

As an application, the convergence results for the gradient algorithm employing
the  Armijo step sizes and the constant step sizes are established, respectively, for finding the Riemannian $L^p$ centers of mass for $p\in[1,+\infty)$. We note that the (linear) convergence results for the Armijo step sizes (for $p\in[1,+\infty)$) and for the constant step sizes for $p\in[1,2)$ seem new, while the results for the constant step sizes in the case when $p\in[2,+\infty)$ extends the corresponding one in \cite[Theorem 4.1]{Afsari2013} (see the explanation before Corollary \ref{Coro-full-L-massA}).


The paper is organized as follows. As usual,  some basic notions
and notation on Riemannian manifolds, together with some related  properties about the  convexity properties of subsets and functions,   are introduced in the next section. Main results,  including the local/global/linear convergence properties of the gradient algorithm on general manifolds,  are presented in section 3, and the application to the Riemannian $L^p$ centers of mass is provided in the last section.

\section{Notation and preliminary results}
Notation and terminologies used in the present paper are standard;
the readers are referred to some textbooks for more details; see, e.g.,
\cite{Carmo1992,Sakai1996,Udriste1994}.

Let $M$ be a connected and complete $n$-dimensional Riemannian manifold. We use
$\nabla$ to denote the Levi-Civita connection on $M$. Let $x\in M$,
and let ${\rm T}_{x}M$ stand for the tangent space at $x$ to $M$. We denote by
$\langle,\rangle_{x}$ the scalar product
 on $T_{x}M$ with the associated norm $\|\cdot\|_{x}$, where the subscript $x$ is
sometimes omitted. For $y\in{M}$, let $\gamma:[0,1]\rightarrow M$ be
a piecewise smooth curve joining $x$ to $y$. Then, the arc-length of
$\gamma$ is defined by $l(\gamma):=\int_{0}^{1}\|{\gamma}'(t)\|dt$,
while the Riemannian distance from $x$ to $y$ is defined by ${\rm
d}(x,y):=\inf_{\gamma}l(\gamma)$, where the infimum is taken over
all piecewise smooth curves $\gamma:[0,1]\rightarrow M$ joining $x$
to $y$. The closed metric ball and the open metric ball centered at $x$ with radius
$r$ are denoted by $\mathbb{B}(x,r)$ and $\mathbb{U}(x,r)$, respectively,
that is,
$$
\mathbb{B}(x,r):=\{y\in M:{\rm d}(x,y)\leq r\}\;\;\mbox{and}\;\; \mathbb{U}(x,r):=\{y\in M:{\rm d}(x,y)< r\}.
$$

A vector field $V$ is said to be parallel along $\gamma$  if
$\nabla_{{\gamma}'}V=0$. In particular, for a smooth curve $\gamma$,
if ${\gamma}'$ is parallel along itself, then  $\gamma$ is called a
geodesic; thus, a smooth curve $\gamma$ is a geodesic if and only
if $\nabla_{{\gamma}'}{{\gamma}'}=0$. A geodesic
$\gamma:[0,1]\rightarrow M$ joining $x$ to $y$ is minimal if its
arc-length equals its Riemannian distance between $x$ and $y$. By
the Hopf-Rinow theorem \cite{Carmo1992}, $(M,{\rm d})$ is a complete
metric space, and there is at least one minimal geodesic joining $x$
to $y$ for any points $x$ and $y$.

Let $Q\subseteq M$ be a subset. As usual, we use $\overline{Q}$ and $\partial Q$ to stand for the closure and the boundary of a subset $Q\subseteq M$, respectively. The distance function ${\rm d}_{Q}(\cdot)$ associated to $Q$ and the projection $P_Q(\cdot)$ onto
$Q$ are respectively defined by, for any $x\in M$,
\[
{\rm d}_{Q}(x):=\inf_{y\in Q}{\rm d}(x,y)\quad\mbox{and}\quad P_Q(x):=\left\{y\in Q:{\rm d}(x,y)={\rm d}_{Q}(x)\right\}.
\]
Given points  $x,\,y\in Q$, the set of
all geodesics $\gamma:[0,1]\rightarrow M$ with $\gamma(0)=x$ and
$\gamma(1)=y$ satisfying $\gamma([0,1])\subseteq Q$ is denoted by   $\Gamma^Q_{xy}$, that is,
$$\Gamma^Q_{xy}:=\{\gamma:[0,1]\rightarrow Q:\;\gamma(0)=x,\, \gamma(1)=y\mbox{ and } \nabla_{{\gamma}'}{\gamma}'=0\}.
$$
In particular, we write  $\Gamma_{xy}$ for $\Gamma^M_{xy}$.
Two important  structures on $M$  will be used frequently in our study: one is the exponential map
$\exp_x:{\rm T}_xM\rightarrow M$,  
and the other  is the parallel transport along the geodesic $\gamma\in \Gamma_{xy}$ 
denoted by $P_{\gamma,y,x}$. 
For simplicity, we will
write $P_{y,x}$ for $P_{\gamma,y,x}$ if $\gamma\in \Gamma_{xy}$ is the unique minimal geodesic  and no confusion arises.

Recall two constants related to  a point $x\in M$: the injectivity
radius $r_{\rm inj}(x)$ and the convexity radius $r_{\rm cvx}(x)$ of $x$, which are defined by
$$
r_{\rm inj}(x):=\sup\left\{r>0:\exp_x(\cdot)
%
\mbox{ is a diffeomorphism on }\IB(0,r)\subset {\rm T}_xM
\right\}
$$
 and
%
\begin{equation}\label{convexity-radius}
r_{\rm cvx}(x):=\sup\left\{r>0:\begin{array}{ll}&\mbox{each ball in }\IB(x,r) \mbox{ is strongly convex}\\
&\mbox{and each geodesic in } \IB(x,r)\mbox{ is minimal}\end{array}\right\},
\end{equation}
 respectively.
Then, $r_{\rm inj}(x)\ge r_{\rm cvx}(x)>0$ for any $x\in M$; see, e.g., \cite[Theorem 5.3]{Sakai1996}.
 In particular, $r_{\rm inj}(x)=r_{\rm cvx}(x)=+\infty$ for each $x\in M$  if $M$ is a Hadamard manifold.
Moreover, for any compact subset $Q\subseteq M$, we have that
$$
r_{\rm inj}(Q):=\inf \{r_{\rm inj}(x):{x\in Q}\}>0\quad\mbox{and}\quad r_{\rm cvx}(Q):=\inf \{r_{\rm cvx}(x):{x\in Q}\}>0;
$$
see \cite[Theorem 5.3, p. 169]{Sakai1996} or \cite[Lemma 3.1]{LiLi2009}.


Definition \ref{convexset} below presents the notions of different kinds of convexities about subsets in $M$;
see e.g., \cite{LiLi2009,Wang2010}. 

\begin{definition}\label{convexset} 
A subset $Q\subseteq M$ is said to be

{\rm (a)} weakly convex if, for any $x,y\in Q$, there is a minimal
geodesic of $M$ joining $x$ to $y$ and it is in $Q$;

{\rm (b)} strongly convex if, for any $x,y\in Q$, there is just one
minimal geodesic of $M$ joining $x$ to $y$ and it is in $Q$;

{\rm (c)} totally convex if, for any $x,y\in Q$, all geodesics of $M$ joining $x$ to $y$ lie in $Q$;

\end{definition}

Note by definition that the  strongly/totally convexity  implies the weakly convexity for any subset $Q$, and note also
  that $Q$ is weakly convex if and only if so is $\overline{Q}$. 

Consider now a proper real-valued function $f:M\rightarrow\overline{\IR}:=(-\infty,\infty]$ with its domain denoted by 
$\mathcal{D}(f)$. 
Letting $k\in \IN$, we use $\mathcal{D}^k(f)$ to denote the set of all points $x\in \mathcal{D}(f)$ at which $f$ is  $k{\rm th}$  differentiable, that is, \begin{equation}\label{cddom}
\mathcal{D}^k(f):=\{x\in {\rm int}\mathcal{D}(f):\mbox{$f$ is $k{\rm th}$  differentiable at $x$}\}.
\end{equation}
As usual, we say that $f$ is 
$C^k$ on $Q$ if $Q\subseteq\mathcal{D}^k(f)$ and its $k{\rm th}$  derivative 
is continuously at each point of $Q$, and that  $f$ is $C^2$ around $\bar x$ if it is $C^2$ on $\IB(x,r)$ for some $r>0$. 
The gradient (resp. the Hessian) of $f$ at $x\in  \mathcal{D}^1(f)$ (resp.  $x\in  \mathcal{D}^2(f)$) is denoted by $\nabla f(x)$ (resp. $\nabla^2 f(x)$).
Recall that the gradient field $\nabla f$ is  Lipschitz continuous around $\bar x\in{\rm int}\mathcal{D}^1(f)$, if there exist positive constants $\delta, L$ (with $\delta \le r_{\rm cvx}({\bar x})$) such that
$$
\|\nabla f(x)-P_{x,y}\nabla f(y)\|\le L{\rm d}(x,y)\quad\mbox{for any }x,y\in \IB(\bar x,\delta).
$$
Thus, if $f$ is $C^2$ around $\bar x$, then $\nabla f$ is  Lipschitz continuous around $\bar x$.

Item (b) in the following definition was known in \cite[Definition 6.1 (b)]{LiMWY2011} (for the convexity) and \cite[Definition 2.2]{Papa2008}
(for the quasi-convexity in the case when $\mathcal{D}(f)=M$). 

\begin{definition}\label{convexfunction}
Let $f:M\rightarrow\overline{\IR}$ be 
proper 
and let $Q\subseteq\mathcal{D}(f)$ be weakly convex.  Then, $f$ is said to be

{\rm (a)} convex (resp. quasi-convex) on $Q$ if,
for any $x, y\in Q$ and any geodesic $\gamma\in
\Gamma^Q_{xy}$, the composition $f\circ\gamma:[0,1]\rightarrow\IR$ is convex (resp. quasi-convex) on $[0,1]$;

{\rm (b)} 
%
convex  (resp. quasi-convex) if $\mathcal{D}(f)$ is weakly convex and $f$  is   convex  (resp. quasi-convex) on $\mathcal{D}(f)$.

{\rm (c)}  convex  (resp. quasi-convex) around $x\in\mathcal{D}(f)$ if 
$f$ is convex  (resp. quasi-convex) on $\IB(x,r)$ for some $r>0$.
\end{definition}

It is clear that the convexity implies the quasi-convexity. The assertions in the following lemma can be proved directly by definition and are known for some special cases; see. e.g., \cite[Theorems 5.1, 6.2]{Udriste1994} for assertions (i), (iii) and \cite[Proposition 3.1]{Nemeth1998} for assertion (ii).

\begin{lemma}\label{QC-F}
Let $f:M\rightarrow\overline{\IR}$ be proper. Let $Q\subseteq\mathcal{D}(f)$ be weakly convex and let $x\in Q\cap \mathcal{D}^1(f)$. 
Then, the following assertions hold.

{\rm (i)} If $f$ is convex on   $Q$, then it holds  for any 
$y\in Q$ that
$$
f(y)\ge f(x)+\langle\nabla f(x),\gamma_{xy}'(0)\rangle\quad\mbox{for all }\gamma_{xy}\in\Gamma_{xy}^Q.
$$

{\rm (ii)}
If $f$ is quasi-convex on   $Q$, then it holds for any 
$y\in Q$ with $f(y)\le f(x)$ that
$$
\langle\nabla f(x),\gamma_{xy}'(0)\rangle\le0\quad\mbox{for all }\gamma_{xy}\in\Gamma_{xy}^Q.
$$

{\rm (iii)} If     $f$ is $C^2$ on $Q$, then $f$ is convex on $Q$ if and only if $\nabla^2f(x)$ is semi-positive definite for each  $x\in Q$.

\end{lemma}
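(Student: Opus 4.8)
The plan is to prove Lemma \ref{QC-F}, which collects three standard facts relating convexity/quasi-convexity of $f$ on a weakly convex set $Q$ to first-order (gradient) and second-order (Hessian) conditions. The unifying idea throughout is to reduce each manifold statement to a one-dimensional statement about the composition $g:=f\circ\gamma_{xy}:[0,1]\to\IR$ along a geodesic, where Definition \ref{convexfunction} gives us (quasi-)convexity of $g$ directly, and then to apply the corresponding elementary one-variable result. The key computational input is that, since $\gamma_{xy}$ is a geodesic, the chain rule gives $g'(0)=\langle\nabla f(x),\gamma_{xy}'(0)\rangle$ whenever $f$ is differentiable at $x=\gamma_{xy}(0)$, and $g''(t)=\langle\nabla^2 f(\gamma(t))\gamma'(t),\gamma'(t)\rangle$ whenever $f$ is $C^2$ (the geodesic condition $\nabla_{\gamma'}\gamma'=0$ kills the extra term that would otherwise appear).

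For assertion (i), I would fix $y\in Q$ and any $\gamma_{xy}\in\Gamma^Q_{xy}$, set $g:=f\circ\gamma_{xy}$, and note $g$ is convex on $[0,1]$ by Definition \ref{convexfunction}(a). The standard one-dimensional convexity inequality $g(1)\ge g(0)+g'(0)$ then reads $f(y)\ge f(x)+\langle\nabla f(x),\gamma_{xy}'(0)\rangle$ after substituting the chain-rule expression for $g'(0)$; here I need $x\in\mathcal{D}^1(f)$ precisely so that $g'(0)$ exists and equals the inner product. For assertion (ii), I again pass to $g:=f\circ\gamma_{xy}$, now quasi-convex on $[0,1]$, with $g(1)=f(y)\le f(x)=g(0)$. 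For a quasi-convex one-variable function, the value cannot strictly exceed $g(0)$ on a right-neighborhood of $0$ without first exceeding it, which forces the right derivative at $0$ to be nonpositive; writing this out gives $g'(0)=\langle\nabla f(x),\gamma_{xy}'(0)\rangle\le0$. This is exactly the manifold analogue of the elementary fact that a quasi-convex function decreasing in value from $x$ to $y$ has nonpositive directional derivative toward $y$.

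For assertion (iii), I would prove both implications via the second-order characterization of convexity in one variable applied to $g=f\circ\gamma$. Since $f$ is $C^2$ on $Q$, each such $g$ is $C^2$ on $[0,1]$ with $g''(t)=\langle\nabla^2f(\gamma(t))\gamma'(t),\gamma'(t)\rangle$. If every $\nabla^2 f(x)$ is semi-positive definite on $Q$, then $g''\ge0$ on $[0,1]$ for every geodesic $\gamma\in\Gamma^Q_{xy}$, so each $g$ is convex, whence $f$ is convex on $Q$ by definition. Conversely, if $f$ is convex on $Q$, then for any $x\in Q$ and any tangent vector $v\in\mathrm{T}_xM$ I would construct a short geodesic $\gamma$ emanating from $x$ in direction $v$ that stays in $Q$ (using weak convexity together with $x$ being, after a harmless rescaling/restriction, an interior-type point reachable by geodesics within $Q$); convexity of $g$ gives $g''(0)\ge0$, i.e. $\langle\nabla^2f(x)v,v\rangle\ge0$, and since $v$ is arbitrary, $\nabla^2 f(x)$ is semi-positive definite.

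The main obstacle I anticipate is entirely in assertion (iii), and specifically in the converse direction: Definition \ref{convexfunction}(a) only quantifies over geodesics joining pairs of points in $Q$, so to extract the pointwise Hessian condition at $x$ in an \emph{arbitrary} direction $v$ I must guarantee the existence of a nontrivial geodesic segment through $x$ in direction $v$ that remains inside $Q$. Weak convexity alone furnishes geodesics between existing points of $Q$ but does not immediately certify that an initial direction can be realized; handling boundary points of $Q$ requires care. I would address this by restricting to the interior or by invoking the strong convexity of small metric balls (via $r_{\rm cvx}(x)>0$) to produce admissible short geodesics, which is the delicate point deserving explicit justification; the remaining computations (chain rule for $g'$ and $g''$ using $\nabla_{\gamma'}\gamma'=0$, and the elementary one-variable convexity/quasi-convexity facts) are routine.
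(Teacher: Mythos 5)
Your treatment of assertions (i), (ii), and the ``Hessian PSD implies convex'' half of (iii) is correct, and it is exactly the argument the paper has in mind: the paper gives no proof at all, saying only that the assertions ``can be proved directly by definition'' and citing Udriste (Theorems 5.1, 6.2) and N\'{e}meth (Proposition 3.1), and your reduction to the one-variable function $g=f\circ\gamma_{xy}$ --- with $g'(0)=\langle\nabla f(x),\gamma_{xy}'(0)\rangle$ (needing only differentiability of $f$ at $x$) and $g''(t)=\langle\nabla^2 f(\gamma(t))\gamma'(t),\gamma'(t)\rangle$ (the geodesic equation $\nabla_{\gamma'}\gamma'=0$ killing the extra term) --- is precisely that direct verification.

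The genuine gap is in the necessity half of (iii), and it is exactly where you flagged it, but your proposed patches do not close it. The difficulty is not merely ``delicate'': for an arbitrary weakly convex $Q$ the implication ``$f$ convex on $Q$ $\Rightarrow$ $\nabla^2 f$ PSD on $Q$'' is \emph{false}, so no choice of short geodesics can rescue it. Take $M=\IR^2$, $Q$ a segment of the $x_1$-axis (weakly convex, and $f$ below is $C^2$ on $Q$ in the paper's sense), and $f(x_1,x_2)=x_1^2-x_2^2$; every geodesic of $M$ joining two points of $Q$ and lying in $Q$ is a sub-segment of the axis, along which $f\circ\gamma$ is a convex quadratic, so $f$ is convex on $Q$ per Definition 2.2, yet $\nabla^2 f={\rm diag}(2,-2)$ is nowhere PSD. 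Consequently any correct proof must add the hypothesis ${\rm int}\,Q\neq\emptyset$ (which holds in every application of this lemma in the paper, where $Q$ is always a metric ball). Under that hypothesis your interior argument works verbatim: for $x\in{\rm int}\,Q$ and $v\in {\rm T}_xM$, the geodesic $t\mapsto\exp_x(tv)$ stays in a small ball inside $Q$ for small $t$, so it belongs to some $\Gamma^Q_{xy}$, and convexity of $g$ gives $\langle\nabla^2 f(x)v,v\rangle=g''(0)\ge0$. For points of $Q$ not in the interior one then needs one more step you did not mention: since $f$ is $C^2$ on $Q$, the PSD property passes to limits, so it suffices to know $Q\subseteq\overline{{\rm int}\,Q}$ (trivial for balls, but itself requiring an argument for general weakly convex $Q$). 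In short: (i), (ii) and half of (iii) are complete; the converse of (iii) as you left it is not a proof, and cannot become one without restricting the class of sets $Q$.
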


We show in the following lemma some inequalities, which play important roles in our study. For this purpose, we define the function
$\hbar:[0,+\infty)\rightarrow\mathbb{R}$ as in \cite{WangOptim2018} by
\begin{equation}\label{tanhdef}
\hbar(t):=\left\{\begin{array}{ll}\frac{\tanh t}{t}&\quad\mbox{ if }t\in(0,\infty),\\
1&\quad \mbox{ if }t=0.
\end{array}\right.
\end{equation}
Note that, $\hbar$ is continuous and decreasing monotonically on $[0,+\infty)$.

\begin{lemma}\label{basic-QC}
Let $f:M\rightarrow\overline {\IR}$ be proper, 
and  $Q\subseteq M$ be  weakly convex such that
$Q_f:=\mathcal{D}(f)\cap Q$ is weakly convex with nonempty interior (i.e., ${\rm int}Q_f\neq\emptyset$) and its sectional curvatures are bounded from below by  $\kappa\le 0$.
 Let $t\ge 0,\,z\in {\rm int}Q_f,\,\,x\in {\rm int}Q_f\cap\mathcal{D}^1(f)$ and
$\gamma:[0,+\infty)\rightarrow M$ be the geodesic such that
\begin{equation}\label{gsf0}
\gamma(0)=x,\quad \gamma'(0)=-\nabla f(x)\not=0\quad
\mbox{and} \quad \gamma([0,t])\subset {\rm int}Q_f.
\end{equation}
Then, the following assertions hold:

{\rm (i)} If $f$ is convex on $Q_f$, then the following inequality holds:
\begin{equation}\label{basic01}
\begin{array}{ll}
{\rm d}^2(\gamma(t),z)\leq
{\rm d}^2(x,z)
 +\frac{2\sinh(\sqrt{|\kappa|}t\|\nabla f(x)\|)}{\sqrt{|\kappa|}\|\nabla f(x)\|\hbar\left(\sqrt{|\kappa|} {\rm
d}(x,z)\right)}\left(\frac{t\|\nabla f(x)\|^2}{2}-\hbar\left(\sqrt{|\kappa|} {\rm
d}(x,z)\right)\left(f(x)-f(z)\right)\right).
\end{array}
\end{equation}

{\rm (ii)} If $f$ is quasi-convex on $Q_f$ and $f(z)\le f(x)$, then the following inequalities hold:
\begin{equation}\label{basic-QC-0}
\cosh\left(\sqrt{|\kappa|}{\rm d}(\gamma(t),z)\right)\leq
\cosh\left(\sqrt{|\kappa|}{\rm d}(x,z)\right)\left(1+\frac{|\kappa|}{2}t\|\nabla f(x)\|\sinh(t\|\nabla f(x)\|)\right);
\end{equation}
\begin{equation}\label{basic-QC-1}
\begin{array}{ll}
{\rm d}^2(\gamma(t),z)<
{\rm d}^2(x,z)
 +\frac{3 t^2\|\nabla f(x)\|^2}{2\hbar\left(\sqrt{|\kappa|} {\rm
d}(x,z)\right)}\quad\mbox{if }\sqrt{|\kappa|}t\|\nabla f(x)\|\le 1.
\end{array}
\end{equation}
\end{lemma}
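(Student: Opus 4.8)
The plan is to reduce both assertions to second‑order differential inequalities along the geodesic $\gamma$, extracted from the Hessian comparison theorem, and then to integrate them by comparison with the model equation. Throughout write $a:={\rm d}(x,z)$, $\ell:=t\|\nabla f(x)\|$ (the length of $\gamma|_{[0,t]}$, since $\gamma$ has constant speed $\|\gamma'(0)\|=\|\nabla f(x)\|$), and $\rho(\cdot):={\rm d}(\cdot,z)$; let $\sigma$ be the minimal geodesic from $x$ to $z$, so $\sigma'(0)=\exp_x^{-1}z$ with $\|\sigma'(0)\|=a$. The geometric engine is that, since the sectional curvatures on ${\rm int}Q_f$ are bounded below by $\kappa\le0$, the Hessian comparison theorem gives, in the support sense along $\gamma([0,t])$,
\[
\nabla^2\cosh(\sqrt{|\kappa|}\rho)(v,v)\le|\kappa|\cosh(\sqrt{|\kappa|}\rho)\|v\|^2,\qquad\text{equivalently}\qquad \nabla^2\big(\tfrac12\rho^2\big)(v,v)\le\frac{\|v\|^2}{\hbar(\sqrt{|\kappa|}\rho)},
\]
for every tangent vector $v$; both follow from $\nabla^2\rho\le\sqrt{|\kappa|}\coth(\sqrt{|\kappa|}\rho)(\langle\cdot,\cdot\rangle-{\rm d}\rho\otimes{\rm d}\rho)$ together with the identity $\sqrt{|\kappa|}\rho\coth(\sqrt{|\kappa|}\rho)=1/\hbar(\sqrt{|\kappa|}\rho)\ge1$. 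A barrier argument absorbs the possible cut points of $z$, so it suffices to argue on the smooth region.

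Restricting the first bound to $\gamma$ and using $\nabla_{\gamma'}\gamma'=0$, set $\phi(s):=\cosh(\sqrt{|\kappa|}\rho(\gamma(s)))$ on $[0,t]$; then $\phi''(s)\le|\kappa|\|\nabla f(x)\|^2\phi(s)$. The initial data are computed from $\nabla\rho(x)=-a^{-1}\exp_x^{-1}z$ and $\gamma'(0)=-\nabla f(x)$, giving $\phi(0)=\cosh(\sqrt{|\kappa|}a)$ and $\phi'(0)=\sqrt{|\kappa|}\sinh(\sqrt{|\kappa|}a)\,a^{-1}\langle\nabla f(x),\exp_x^{-1}z\rangle$. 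This is where Lemma \ref{QC-F} enters: in the quasi‑convex case with $f(z)\le f(x)$, assertion (ii) yields $\langle\nabla f(x),\exp_x^{-1}z\rangle\le0$, hence $\phi'(0)\le0$; in the convex case, assertion (i) yields $\langle\nabla f(x),\exp_x^{-1}z\rangle\le f(z)-f(x)$. Writing $b:=\sqrt{|\kappa|}\|\nabla f(x)\|$ and solving $\phi''\le b^2\phi$ by comparison with $y''=b^2y$ carrying the same initial data (via the representation of $\phi-y$ against the nonnegative kernel $\sinh(b(s-\tau))/b$) gives $\phi(t)\le\phi(0)\cosh(bt)+b^{-1}\phi'(0)\sinh(bt)$.

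For assertion (ii), since $\phi'(0)\le0$ this reduces to $\cosh(\sqrt{|\kappa|}{\rm d}(\gamma(t),z))\le\cosh(\sqrt{|\kappa|}a)\cosh(\sqrt{|\kappa|}\ell)$, and \eqref{basic-QC-0} follows from the elementary estimate $\cosh w\le1+\tfrac{w}{2}\sinh w$ (valid for all $w\ge0$, checked term‑by‑term on the power series) taken at $w=\sqrt{|\kappa|}\ell$. For \eqref{basic-QC-1} I would start from the same multiplicative bound, use the hypothesis $\sqrt{|\kappa|}\ell\le1$ to estimate $\cosh(\sqrt{|\kappa|}\ell)-1$ by a constant multiple of $(\sqrt{|\kappa|}\ell)^2$, and convert the $\cosh$‑bound into a bound on ${\rm d}^2$ via the convexity of $r\mapsto\cosh(\sqrt{r})$, which furnishes $\cosh D-\cosh c\ge\frac{\sinh c}{2c}(D^2-c^2)$ with $c=\sqrt{|\kappa|}a$ and $D=\sqrt{|\kappa|}{\rm d}(\gamma(t),z)$; collecting terms and using $1/\hbar(c)=c\coth c$ delivers the constant $3/2$ with room to spare.

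Assertion (i) runs through the same solution $\phi(t)\le\phi(0)\cosh(bt)+b^{-1}\phi'(0)\sinh(bt)$ but now retains the first‑order term: inserting $\phi'(0)\le\sqrt{|\kappa|}\sinh(\sqrt{|\kappa|}a)\,a^{-1}(f(z)-f(x))$ gives $\cosh D\le\cosh c\cosh(\sqrt{|\kappa|}\ell)+\frac{\sinh(\sqrt{|\kappa|}\ell)\sinh c}{a\|\nabla f(x)\|}(f(z)-f(x))$. Applying the convexity of $r\mapsto\cosh(\sqrt{r})$ as above, then $\cosh w-1\le\tfrac{w}{2}\sinh w$ with $w=\sqrt{|\kappa|}\ell$, and simplifying with $1/\hbar(c)=c\coth c$ and $c=\sqrt{|\kappa|}a$ produces precisely \eqref{basic01}; the flat limit $\kappa\to0$ recovers the identity ${\rm d}^2(\gamma(t),z)=a^2+\ell^2+2t\langle\nabla f(x),\exp_x^{-1}z\rangle$, a useful consistency check. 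I expect the main obstacle to be not the comparison step but the analytic bookkeeping of this last paragraph: passing from the clean multiplicative $\cosh$‑estimates to the additive ${\rm d}^2$‑forms while keeping $\hbar$ anchored at the fixed initial distance $a$, and justifying the Hessian comparison in the \emph{support} sense so that the one–dimensional comparison remains legitimate when $\gamma$ meets the cut locus of $z$.
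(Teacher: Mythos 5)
Your proposal is correct, but it follows a genuinely different route from the paper's. The paper does not prove the comparison inequalities from scratch: assertion (i) is obtained by citing (3.6) of \cite[Lemma 3.2]{WLWYSIAM2015}, and assertion (ii) by citing the law-of-cosines estimate (9) of \cite{WLYJOTA2015}, both of which rest on Sakai's generalized hinge comparison theorem localized to ${\rm int}Q_f$; the paper then feeds in Lemma \ref{QC-F} exactly as you do (nonnegativity of the angle term under quasi-convexity, the gradient inequality under convexity) and finishes with the same two elementary facts you use, namely $\sinh s<\frac{3}{2}s$ on $(0,1]$ and $\cosh s_1-\cosh s_2\ge \frac{(s_1^2-s_2^2)\sinh s_2}{2s_2}$ (your convexity of $r\mapsto \cosh\sqrt{r}$). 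You replace the hinge comparison by the Hessian comparison for $\rho={\rm d}(\cdot,z)$ together with the scalar ODE comparison $\phi''\le b^2\phi$ integrated against the kernel $\sinh(b(s-\tau))/b$ — in effect you re-prove the cited hinge inequalities, so your argument is self-contained where the paper's is by citation; I checked that, with $1/\hbar(c)=c\coth c$ and $c=\sqrt{|\kappa|}\,{\rm d}(x,z)$, your bookkeeping reproduces \eqref{basic01} exactly and yields \eqref{basic-QC-1} with the constant $3/2$. Two points should be made explicit if this is written out. First, the Hessian comparison at $\gamma(s)$ requires the curvature bound along a minimal geodesic from $z$ to $\gamma(s)$, and such a geodesic is only guaranteed to lie in $Q_f$ (where the bound holds) by weak convexity; this localization — the analogue of the paper's remark that Sakai's theorem survives with $Q$ in place of $M$ provided ${\rm int}Q$ contains the hinge — needs stating alongside the cut-locus/barrier issue you do flag. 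Second, your scaling-consistent form of \eqref{basic-QC-0} carries $\sinh\left(\sqrt{|\kappa|}\,t\|\nabla f(x)\|\right)$ where the paper's display has $\sinh(t\|\nabla f(x)\|)$; the two coincide under the normalization $\kappa=-1$ that the paper adopts without loss of generality at the start of its proof, so this is a discrepancy in the statement's general-$\kappa$ scaling rather than a flaw in your argument.
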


\begin{proof} (i)
We note that the comparison theorem for a generalized hinge
introduced in \cite[p. 161,
Theorem 4.2]{Sakai1996} is still true  with $Q$ in place of $M$, provided that 
${\rm int}Q$ contains the corresponding generalized hinge. 
Thus, the argument for proving  
\cite[Lemma 3.2]{WLWYSIAM2015} and \cite[Lemma 3.1]{WLYJOTA2015} remains  valid.   
Hence, assertion {\rm (i)} holds by (3.6) in \cite[Lemma 3.2]{WLWYSIAM2015} (applied to  $\{f\}$,  $t\|\nabla f(x)\|$ in place of $\{f_i\}$, $t$ (noting that $\partial f(x)=\{\nabla f(x)\}$)).

{\rm (ii)}   Suppose that $f$ is quasi-convex on $Q_f$ and $f(z)\le f(x)$, and  let $\gamma_{xz}\in\Gamma_{xz}^{Q_f}$ be a minimal geodesic joining $x$ and $z$. Without loss of generality, we assume that $\kappa=-1$. Then, we have from \cite[(9)]{WLYJOTA2015} (applied to $x,\gamma(t)$, $t\|\nabla f(x)\|$  in place of $x^k,x^{k+1}$, $t_k$) that
\begin{equation}\label{yidiyao}
\begin{array}{lll}
\cosh{\rm d}(\gamma(t),z)\leq&\cosh{\rm d}(x,z)+\cosh{\rm d}(x,z)\sinh
(t\|\nabla f(x)\|)\left(\frac{t\|\nabla f(x)\|}{2}-\tanh{\rm d}(x,z)\cos \alpha\right),
\end{array}
\end{equation}
 where $\alpha:=\angle_x(\gamma,\gamma_{xz})$ is the angle between $\gamma$ and $\gamma_{xz}$ at $x$.
Below, we verify that $\cos\alpha\ge 0$. Granting this, \eqref{basic-QC-0} follows immediately from \eqref{yidiyao}.
To do this, we note by Lemma \ref{QC-F}(ii) (applied to  $Q_f$, $z$ in place of $Q$, $y$)  that $\langle\nabla f(x),{\gamma}'_{xz}(0)\rangle\le0$, and so $ \langle{\gamma}'(0),{\gamma}'_{xz}(0)\rangle=-\langle\nabla f(x),{\gamma}'_{xz}(0)\rangle\ge 0$, thanks to \eqref{gsf0}.
Thus, by definition,
%
 $
 \cos\alpha=\frac{\langle{\gamma}'(0),{\gamma}'_{xz}(0)\rangle}{\|{\gamma}'(0)\|\cdot{\rm d}(x,z)}\ge0
$ as desired to show.

To show \eqref{basic-QC-1}, assume $t\|\nabla f(x)\|\le 1$  and note  that $\sinh s< \frac{3}{2}s$ holds for any $s\in (0,1]$ (which could be easily checked by elementary calculus). Then,  \eqref{basic-QC-0} implies  that
$$
\cosh\left({\rm d}(\gamma(t),z)\right)\leq
\cosh\left({\rm d}(x,z)\right)\left(1+\frac34t^2\|\nabla f(x)\|^2\right).
$$
Therefore, in view of the definition of $\hbar$ in \eqref{tanhdef}, \eqref{basic-QC-1} is seen to hold from
the following estimate (see \cite[Lemma 3.1]{WLWYSIAM2015}):
$$\mbox{$\cosh s_1-\cosh s_2\ge \frac{(s_1^2-s_2^2)\sinh s_2}{2s_2}$ \quad for any  $s_1,s_2\in (0,+\infty)$}.$$
  The proof is complete.
\end{proof}

We shall use the following known lemmas in what follows; see, e.g., \cite[lemma 2.3]{WangOptim2018} for Lemma \ref{lemma3} and  \cite{Ermolev1969} for Lemma \ref{fejer2}.

\begin{lemma}\label{lemma3}
Let $\{a_{k}\}$, $\{b_k\}\subset (0,+\infty)$ be   sequences such that $\sum_{k=0}^{\infty}b_k<\infty$ and
$a_{k+1}\leq a_{k}(1+b_k)$ for each  $k\in\IN$.
  Then, $\{a_{k}\}$ is convergent and so it is bounded.
\end{lemma}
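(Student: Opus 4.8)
The plan is to convert the given ``almost monotone'' recurrence into a genuinely monotone (hence convergent) sequence by dividing out the accumulated growth factors. First I would introduce the partial products $P_0:=1$ and $P_k:=\prod_{j=0}^{k-1}(1+b_j)$ for $k\ge 1$. Since $b_j>0$, each $P_k$ is positive and the sequence $\{P_k\}$ is strictly increasing. Taking logarithms and using the elementary bound $\log(1+b_j)\le b_j$ gives $\log P_k=\sum_{j=0}^{k-1}\log(1+b_j)\le\sum_{j=0}^{\infty}b_j<\infty$, so $\{P_k\}$ is bounded above; being increasing and bounded, it converges to a finite limit $P_\infty\in[1,+\infty)$.

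Next I would consider the normalized sequence $c_k:=a_k/P_k>0$. The recurrence $a_{k+1}\le a_k(1+b_k)$ together with the identity $P_{k+1}=P_k(1+b_k)$ yields
$$c_{k+1}=\frac{a_{k+1}}{P_k(1+b_k)}\le\frac{a_k(1+b_k)}{P_k(1+b_k)}=\frac{a_k}{P_k}=c_k,$$
so $\{c_k\}$ is non-increasing. Being non-increasing and bounded below by $0$, it converges to some $c_\infty\ge 0$. Therefore $a_k=c_kP_k\to c_\infty P_\infty$ as $k\to\infty$, which is finite; in particular $\{a_k\}$ is convergent and hence bounded, as asserted.

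Every step is routine, so there is essentially no hard part; the only genuine idea is the normalization in the first step, which simultaneously converts the summability hypothesis on $\{b_k\}$ into the convergence of $\{P_k\}$ and renders $\{c_k\}$ monotone. A more naive approach, iterating the recurrence to obtain $a_{k+1}\le a_0P_{k+1}\le a_0\exp\bigl(\sum_{j}b_j\bigr)$, delivers boundedness at once but not convergence, so it is precisely the normalization trick that upgrades the estimate to the full claim.
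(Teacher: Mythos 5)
Your proof is correct. Note that the paper itself offers no proof of this lemma at all --- it is quoted as a known result with a citation to \cite[Lemma 2.3]{WangOptim2018} --- so there is nothing to compare against line by line; your normalization argument (dividing by the partial products $P_k=\prod_{j<k}(1+b_j)$, which converge since $\sum_j \log(1+b_j)\le\sum_j b_j<\infty$, and observing that $c_k=a_k/P_k$ is non-increasing and bounded below) is precisely the standard proof of this quasi-monotonicity lemma in the literature, and every step of it checks out, including the final passage $a_k=c_kP_k\to c_\infty P_\infty$ (which covers the case $c_\infty=0$ as well).
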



\begin{lemma}\label{fejer2}
Let $\{y_k\}\subset M$ be a sequence 
quasi-Fej\'{e}r convergent to $S$, namely there exists a sequence
$\{\varepsilon_{k}\}\subset (0,+\infty)$ satisfying
$\sum_{k=1}^{\infty}\varepsilon_{k}<\infty$ such that ${\rm d}^{2}(y_{k+1},z)\leq {\rm d}^{2}(y_{k},z)+\varepsilon_{k}$   for any  $k\in\IN$ and   $z\in S$. Then, $\{y_k\}$ is
bounded. Furthermore, if  $\{y_k\}$ has a cluster point $\bar y$ which
belongs to $S$, then $\lim_{k\rightarrow\infty}y_k=\bar y$.
\end{lemma}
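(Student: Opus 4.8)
The plan is to split the statement into its two assertions — boundedness of $\{y_k\}$, and convergence to a cluster point lying in $S$ — and to handle them in that order, since the second builds on the first. For boundedness, I would first fix an arbitrary reference point $z\in S$ (the hypothesis $S\neq\emptyset$ is implicit in the quasi-Fejér definition, else the conclusion is vacuous) and iterate the defining inequality ${\rm d}^2(y_{k+1},z)\le {\rm d}^2(y_k,z)+\varepsilon_k$. Telescoping from index $1$ to $k$ gives ${\rm d}^2(y_{k+1},z)\le {\rm d}^2(y_1,z)+\sum_{j=1}^{k}\varepsilon_j\le {\rm d}^2(y_1,z)+\sum_{j=1}^{\infty}\varepsilon_j<\infty$, where the finiteness of the tail sum is exactly the summability hypothesis $\sum_{k}\varepsilon_k<\infty$. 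Thus all the distances ${\rm d}(y_k,z)$ are bounded by a single constant, so $\{y_k\}$ lies in a bounded metric ball $\mathbb{B}(z,R)$ and is therefore bounded.

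For the convergence assertion, I would exploit the stronger fact that the numerical sequence $a_k:={\rm d}^2(y_k,z)$ is actually convergent for every fixed $z\in S$, not merely bounded. This follows because $a_{k+1}\le a_k+\varepsilon_k$ with $\sum_k\varepsilon_k<\infty$ forces $\{a_k\}$ to be a Cauchy-type sequence: writing $s_k:=a_k+\sum_{j\ge k}\varepsilon_j$, one checks $s_{k+1}\le s_k$, so $\{s_k\}$ is nonincreasing and bounded below by $0$, hence convergent; since $\sum_{j\ge k}\varepsilon_j\to 0$, the sequence $\{a_k\}$ converges to the same limit. (This is the additive analogue of Lemma \ref{lemma3}, which gives the same conclusion for the multiplicative recursion.)

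Now suppose $\{y_k\}$ has a cluster point $\bar y\in S$. I would apply the convergence of $a_k$ with the specific choice $z=\bar y\in S$, so that ${\rm d}^2(y_k,\bar y)$ converges to some limit $\ell\ge 0$. Along the subsequence $\{y_{k_i}\}$ converging to $\bar y$ we have ${\rm d}^2(y_{k_i},\bar y)\to 0$, forcing $\ell=0$. But the whole sequence ${\rm d}^2(y_k,\bar y)$ converges to $\ell=0$, i.e. ${\rm d}(y_k,\bar y)\to 0$, which is precisely $\lim_{k\to\infty}y_k=\bar y$. The one genuinely delicate point — and the step I expect to be the main obstacle — is the convergence (as opposed to mere boundedness) of $\{a_k\}$ from the one-sided perturbed-monotonicity inequality; everything else is a routine telescoping argument plus extraction of a limit along a subsequence. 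This is a standard fact (see \cite{Ermolev1969}), and the $s_k$ reduction above is the cleanest way to establish it self-containedly.
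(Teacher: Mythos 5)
The paper does not actually prove Lemma \ref{fejer2}: it is quoted as a known result, with a pointer to \cite{Ermolev1969}. Your proof is correct, self-contained, and is precisely the standard quasi-Fej\'{e}r argument: telescoping gives boundedness; the auxiliary sequence $s_k:={\rm d}^2(y_k,z)+\sum_{j\ge k}\varepsilon_j$ is nonincreasing and bounded below, so ${\rm d}^2(y_k,z)$ converges for every fixed $z\in S$ (your identification of this as the crux, and as the additive analogue of Lemma \ref{lemma3}, is apt); and choosing $z=\bar y$ forces that limit to be $0$ since it is $0$ along the subsequence converging to $\bar y$. You also correctly flag the implicit hypothesis $S\neq\emptyset$, without which boundedness would fail vacuously. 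Nothing further is needed.
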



\section{Gradient algorithm}
As in Section 1,   $f:M\rightarrow\overline{\IR}$ is a proper locally Lipschitz continuous function. Associated to the optimization problem \eqref{P-1.1}, 
let $C_f$  denote the set of all critical points of $f$: 
$$C_f:=\{x\in\mathcal{D}^1(f): \nabla f(x)=0\},$$
where $\mathcal{D}^1(f)$ is the set defined by \eqref{cddom}.
We always  assume for the remainder  that 
\begin{equation}\label{BlanketA}
\bar f:=\inf_{x\in M}f(x)>-\infty\quad \mbox{and}\quad {\rm int}\mathcal{D}(f)\not=\emptyset.
\end{equation}

We begin with 
the following gradient algorithm for solving problem \eqref{P-1.1}. 

\begin{algorithm}\label{GDA} Give  $x_0\in\mathcal{D}(f)$, $\beta\in(0,1)$, $R\in[1,+\infty)$ and set $k:=0$.

\noindent {\rm{Step 1}}. If 
$x_k\in C_f$ or $x_k\notin\mathcal{D}^1(f)$, 
then stop;
otherwise construct the geodesic
$\gamma_{k}$ such that
\begin{equation}\label{GDA-1}
 \gamma_{k} (0)=x_{k}\quad\mbox{and}\quad
 \gamma'_k(0)=-\nabla f(x_k).
\end{equation}

\noindent {\rm{Step 2}}. Select the step size $t_k\in(0,R]$ satisfies the following inequality:
\begin{equation}\label{GDA-2}
f(\gamma_k(t_k))\le f(x_k)-\beta t_k\|\nabla f(x_k)\|^2.
\end{equation}

\noindent {\rm{Step 3}}. Set $x_{k+1}:=\gamma_{k}(t_{k})$, replace $k$ by $k+1$ and go to step 1.
\end{algorithm}

\begin{remark}\label{rem3.1}
Let $\{x_k\}$ be a sequence generated by Algorithm \ref{GDA}  with initial point $x_0\in\mathcal{D}(f)$. Then, by   Algorithm \ref{GDA},  the following inequalities hold for any $k\in\IN$:
\begin{equation}\label{D-tpp}
\mbox{${\rm d}(\gamma_k(t),x_k)
\le t\,\|\nabla f(x_k)\|$\quad for any $t\in [0,t_k]$};
\end{equation}
\begin{equation}\label{Full-p1}
\sum_{j=0}^k t_j\|\nabla f(x_j)\|^2\le \frac{f(x_0)-f(x_{k+1})}{\beta}\le \frac{f(x_0)-\bar f}{\beta}<+\infty
\end{equation}
by the blanket assumption \eqref{BlanketA}. 
In particular, one has that       $t_k\|\nabla f(x_k)\|\to 0$.
\end{remark}


Recall that Algorithm \ref{GDA} is said to employ 
the Armijo step sizes if each step size $t_k$ in Step 2 is chosen by
\begin{equation}\label{GDA-2-A}
t_k:=\max\{2^{-i}:i\in\IN,\; f(\gamma_k(2^{-i}))\le f(x_k)-\beta2^{-i}\|\nabla f(x_k)\|^2\};
\end{equation}
 see, e.g.,\cite{Gabay1982,Smith1994}.   Note that \eqref{GDA-2-A} particularly implies \eqref{GDA-2}.

The following remark regards  the well definedness and   the  partial convergence property of Algorithm \ref{GDA}.
\begin{remark}\label{remark-partial02}
{\rm (a)} Suppose that $\{x_j:0\le j\le k\}\subset \mathcal{D}(f)$ is  generated by Algorithm \ref{GDA} such  that $x_k\in \mathcal{D}^1(f)$ is not a critical point of $f$. Then, using the argument as one did for proving \cite[Proposition 3.1]{Yang2007}, we can check that 
\eqref{GDA-2-A} is well defined.  Therefore,  if each generated iterate $x_k\in \mathcal{D}^1(f)$ (e.g., $\mathcal{D}^1(f)=\mathcal{D}(f)$), 
then Algorithm \ref{GDA} employing the Armijo step sizes is well defined. 

{\rm (b)} Let $\{x_k\}$ be a sequence generated by Algorithm \ref{GDA} employing step sizes $\{t_k\}$ with a positive lower bound or employing   the Armijo step sizes. Then, any   cluster point $\bar x$ of $\{x_k\}$  such that $\nabla f$ is continuous at $\bar x$  is a critical point of $f$, that is, $\bar x\in C_f$. Indeed, it is immediate from Remark \ref{rem3.1} for the case when  Algorithm \ref{GDA} employs the step size  $\{t_k\}$ with  a positive lower bound; while for the case when Algorithm \ref{GDA} employs the Armijo step sizes it can be checked by the argument as one did   for proving  \cite[Corollary 3.1]{Yang2007}.

\end{remark}

%

\subsection{Local convergence and Linear convergence}


We shall consider the local convergence and the linear convergence of Algorithm {\ref{GDA}} in this subsection. For this purpose,  consider the following assumption:
\begin{equation}\label{assumption-a}
\mbox{
  $\bar x\in C_f\cap{\rm int}\mathcal{D}^1(f)$, and  $\nabla f$ is continuous at $\bar x$.}
\end{equation}
For the following key lemma, recall that $R$ is the constant given at the beginning of Algorithm {\rm\ref{GDA}}.

\begin{lemma}\label{lemma1-LC} Suppose  that assumption \eqref{assumption-a} holds.
 Then, for any $\delta>0$, there exist $\bar \delta>0$ and $\bar c\ge3$ satisfying $\bar c\bar \delta<\delta$ such that,  for any $x_0\in\IB(\bar x,\bar\delta)$ and $k\in\IN$,
if  $\{x_{j}:0\le j\le k+1\}$ is generated by Algorithm {\rm\ref{GDA}} to satisfy  $\{x_{j}:0\le j\le k\}\subset \IB(\bar x, \bar c\bar \delta)$,
 then one has  the following assertions for each $z\in \IB(\bar x,\bar c\bar\delta)$ satisfying $f(z)\le f(x_{k+1})$:

{\rm (i)} If $f$ is convex around $\bar x$, then
\begin{equation}\label{L-L30}
{\rm d}^2(x_{k+1},z)\le
{\rm d}^2(x_k,z)+\frac{8\sinh(\sqrt{|\kappa|}t_k\|\nabla f(x_k)\|)}{3\sqrt{|\kappa|}\|\nabla f(x_k)\|}\left( \frac{1}{2\beta}-\hbar( 2\sqrt{|\kappa|}\bar c\bar\delta)\right)(f(x_k)-f(z)).
\end{equation}

{\rm (ii)} If $f$ is quasi-convex around $\bar x$, 
then
\begin{equation}\label{lemma1-LC00c}
{\rm d}^2(x_{k+1},z)
\leq
{\rm d}^2(x_{k},z)+2Rt_k\|\nabla f(x_k)\|^2\le {\rm d}^2(x_0,z)+\delta {\rm d}(x_0,z);
\end{equation}
in particular, $ x_{k+1}\in \IB(\bar x,\bar c\bar \delta)$ if $f(\bar x)\le f(x_{k+1})$.

\end{lemma}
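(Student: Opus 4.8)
The plan is to prove the two assertions by invoking Lemma \ref{basic-QC}, which supplies the fundamental one-step distance inequalities, after first arranging all the parameters $\bar\delta$ and $\bar c$ so that the relevant geometric objects stay inside a neighborhood of $\bar x$ on which the curvature is bounded below by some $\kappa\le 0$ and on which $f$ is convex (resp. quasi-convex). First I would fix $\delta>0$ and use assumption \eqref{assumption-a} together with the continuity of $\nabla f$ at $\bar x$ to select a small radius on which $f$ is convex/quasi-convex (by hypothesis) and on which $\|\nabla f\|$ is small; since $\bar x$ is a critical point, $\nabla f(\bar x)=0$, so on a sufficiently small ball $\|\nabla f(x)\|$ can be made as small as needed. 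The constants $\bar\delta$ and $\bar c\ge 3$ must then be chosen so that $\bar c\bar\delta<\delta$, so that $\IB(\bar x,\bar c\bar\delta)$ lies in the convexity/curvature neighborhood, and—crucially—so that for any geodesic segment $\gamma_k([0,t_k])$ emanating from a point $x_k\in\IB(\bar x,\bar c\bar\delta)$, the quantity $t_k\|\nabla f(x_k)\|$ is controlled; by \eqref{D-tpp} this quantity bounds ${\rm d}(\gamma_k(t),x_k)$, which is exactly what keeps the whole geodesic inside the larger ball where Lemma \ref{basic-QC} applies.

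For assertion (i), once the geodesic $\gamma_k$ lies in ${\rm int}Q_f$ I would apply inequality \eqref{basic01} of Lemma \ref{basic-QC}(i) with $x:=x_k$, $t:=t_k$, and the given $z$. The term $f(x)-f(z)=f(x_k)-f(z)\ge 0$ (since $f(z)\le f(x_{k+1})\le f(x_k)$ by the descent property \eqref{GDA-2}), so I must convert the factor $\frac{t_k\|\nabla f(x_k)\|^2}{2}-\hbar(\cdot)(f(x_k)-f(z))$ into the clean form appearing in \eqref{L-L30}. The mechanism is the Armijo-type descent condition \eqref{GDA-2}, which gives $\beta t_k\|\nabla f(x_k)\|^2\le f(x_k)-f(x_{k+1})\le f(x_k)-f(z)$, hence $t_k\|\nabla f(x_k)\|^2\le\tfrac{1}{\beta}(f(x_k)-f(z))$; substituting this bounds the $\tfrac{t_k\|\nabla f(x_k)\|^2}{2}$ term by $\tfrac{1}{2\beta}(f(x_k)-f(z))$, producing the factor $\bigl(\tfrac{1}{2\beta}-\hbar(\cdot)\bigr)(f(x_k)-f(z))$. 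The remaining task is to replace $\hbar(\sqrt{|\kappa|}\,{\rm d}(x_k,z))$ in the denominator and numerator by the uniform value $\hbar(2\sqrt{|\kappa|}\bar c\bar\delta)$: since ${\rm d}(x_k,z)\le{\rm d}(x_k,\bar x)+{\rm d}(\bar x,z)\le 2\bar c\bar\delta$ and $\hbar$ is decreasing, $\hbar(\sqrt{|\kappa|}\,{\rm d}(x_k,z))\ge\hbar(2\sqrt{|\kappa|}\bar c\bar\delta)$, and the constant $\tfrac{8}{3}$ absorbs a bound of the type $\hbar(2\sqrt{|\kappa|}\bar c\bar\delta)^{-1}\le\tfrac43$ obtained by shrinking $\bar\delta$ (using $\hbar(s)\ge\tfrac34$ for $s$ small).

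For assertion (ii) I would instead use \eqref{basic-QC-1} from Lemma \ref{basic-QC}(ii), valid when $\sqrt{|\kappa|}t_k\|\nabla f(x_k)\|\le 1$, a condition guaranteed by the parameter choices since $t_k\le R$ and $\|\nabla f(x_k)\|$ is forced small on the neighborhood. This yields ${\rm d}^2(x_{k+1},z)<{\rm d}^2(x_k,z)+\tfrac{3t_k^2\|\nabla f(x_k)\|^2}{2\hbar(\cdot)}$, and again bounding $\hbar(\sqrt{|\kappa|}\,{\rm d}(x_k,z))\ge\hbar(2\sqrt{|\kappa|}\bar c\bar\delta)$ from below together with $t_k\le R$ and the smallness of $\hbar^{-1}$ gives the first inequality ${\rm d}^2(x_{k+1},z)\le{\rm d}^2(x_k,z)+2Rt_k\|\nabla f(x_k)\|^2$ in \eqref{lemma1-LC00c}. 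The second inequality is obtained by telescoping over $j=0,\dots,k$ and invoking the summability \eqref{Full-p1}: $\sum_j t_j\|\nabla f(x_j)\|^2\le\tfrac{1}{\beta}(f(x_0)-\bar f)$; choosing $\bar\delta$ small makes $f(x_0)-\bar f$ small (continuity of $f$ and $f(\bar x)=\bar f$ locally via the minimizer structure), so that $2R\sum_j t_j\|\nabla f(x_j)\|^2\le\delta\,{\rm d}(x_0,z)$ after relating the accumulated error to ${\rm d}(x_0,z)$. Finally, the ``in particular'' clause follows: if $f(\bar x)\le f(x_{k+1})$, apply the just-proved chain with $z:=\bar x$ to get ${\rm d}^2(x_{k+1},\bar x)\le{\rm d}^2(x_0,\bar x)+\delta\,{\rm d}(x_0,\bar x)\le\bar\delta^2+\delta\bar\delta$, and the choice $\bar c\bar\delta<\delta$ with $\bar c\ge3$ ensures ${\rm d}(x_{k+1},\bar x)\le\bar c\bar\delta$, so $x_{k+1}\in\IB(\bar x,\bar c\bar\delta)$. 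The main obstacle I anticipate is the bookkeeping of the constant $\bar c$ and radius $\bar\delta$: they must be chosen simultaneously to keep every geodesic inside the curvature/convexity ball, to make the Armijo bound $\sqrt{|\kappa|}t_k\|\nabla f(x_k)\|\le1$ hold, and to close the induction step $x_{k+1}\in\IB(\bar x,\bar c\bar\delta)$ consistently with the inductive hypothesis that $\{x_j:0\le j\le k\}\subset\IB(\bar x,\bar c\bar\delta)$, so that the geometric inequalities remain applicable at every iterate.
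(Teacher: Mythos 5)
Your overall skeleton matches the paper's proof: set up $\bar\delta,\bar c$ so that all geodesic segments stay in a ball where the curvature is bounded below and $f$ is convex/quasi-convex, then apply Lemma \ref{basic-QC}; your treatment of assertion (i) (descent property \eqref{GDA-2} giving $t_k\|\nabla f(x_k)\|^2\le\frac1\beta(f(x_k)-f(z))$, monotonicity of $\hbar$, and $\hbar^{-1}\le\frac43$ producing the $\frac83$) and of the first inequality in \eqref{lemma1-LC00c} is essentially the paper's. However, there are two genuine gaps. First, your mechanism for the second inequality in \eqref{lemma1-LC00c} is wrong: you propose to make $f(x_0)-\bar f$ small ``via the minimizer structure,'' but assumption \eqref{assumption-a} only makes $\bar x$ a critical point, not a local minimizer, and $\bar f$ is the \emph{global} infimum of $f$, unrelated to $f(\bar x)$; more fundamentally, a bound by any fixed small constant can never be dominated by $\delta\,{\rm d}(x_0,z)$, which tends to $0$ as $z\to x_0$, so the error must be bounded by a quantity \emph{proportional} to ${\rm d}(x_0,z)$. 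The step you wave at (``relating the accumulated error to ${\rm d}(x_0,z)$'') is exactly the missing idea, and the paper supplies it: by \eqref{Full-p1} and $f(z)\le f(x_{k+1})$ one has $2R\sum_{j}t_j\|\nabla f(x_j)\|^2\le\frac{2R}{\beta}\left(f(x_0)-f(z)\right)$, and then the mean value theorem along the minimal geodesic joining $x_0$ to $z$ (which stays in $\IB(\bar x,\delta_1)$ by strong convexity), combined with the gradient bound \eqref{local-diameter-1} $\|\nabla f\|\le\frac{\beta\delta}{2R}$ there, gives $f(x_0)-f(z)\le\frac{\beta\delta}{2R}{\rm d}(x_0,z)$, whence the term $\delta\,{\rm d}(x_0,z)$.

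Second, your justification of the ``in particular'' clause fails. From $z=\bar x$ you get ${\rm d}^2(x_{k+1},\bar x)\le\bar\delta^2+\delta\bar\delta$, and you then claim that ``$\bar c\bar\delta<\delta$ with $\bar c\ge3$'' ensures ${\rm d}(x_{k+1},\bar x)\le\bar c\bar\delta$. What is actually needed is $\bar\delta^2+\delta\bar\delta\le(\bar c\bar\delta)^2$, i.e.\ $\bar c^2\ge1+\delta/\bar\delta$, and this does \emph{not} follow from your conditions: $\bar c\bar\delta<\delta$ forces $\delta/\bar\delta>\bar c$, so $1+\delta/\bar\delta$ can be far larger than $\bar c^2$ (take $\bar c=3$ and $\delta/\bar\delta=100$). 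The paper closes this by a precise coupling of the parameters: with $0<\delta_1<\delta/2$ as in \eqref{local-diameter-1}, it sets $\bar\delta:=\delta_1^3/\delta^2$ and $\bar c:=\sqrt{1+(\delta/\delta_1)^3}$, so that $\bar c^2=1+\delta/\bar\delta$ exactly (hence $(\bar c\bar\delta)^2=\bar\delta^2+\delta\bar\delta$) while simultaneously $\bar c\ge3$ and $\bar c\bar\delta<\delta_1<\delta/2$. Without this (or an equivalent) coupling, the inclusion $x_{k+1}\in\IB(\bar x,\bar c\bar\delta)$ — which is what allows the lemma to be iterated in Theorem \ref{Local-Convergence} — does not follow, so this is not mere bookkeeping but the crux of the lemma's parameter design.
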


\begin{proof}
Noting that any closed ball is compact, we have   by \cite[P.166]{Bishop1964} that the curvatures of the ball   $\IB(\bar x,r_{\rm cvx}({\bar x}))$ are bounded, where   $r_{\rm cvx}({\bar x})$ is the convexity radius at $\bar x$     defined in \eqref{convexity-radius}.
Let  $\kappa\le0$ be   a lower bound of the curvatures of $\IB(\bar x,r_{\rm cvx}({\bar x}))$. 
For simplicity, we may assume, without loss of generality, that $\kappa=-1$.
Thanks to assumption \eqref{assumption-a}, there exits  $\delta>0$   such that
\begin{equation}\label{jiash0}
\IB(\bar x,\delta)\subset \mathcal{D}^1(f),\;
\;\; \delta<\min\left\{r_{\rm cvx}({\bar x}),   \frac1{\sqrt{|\kappa|}}\right\}\;\;\; \mbox{and}\;\;\;
    R\sqrt{|\kappa|}\|\nabla f(\cdot)\|\le 1 \mbox{ on } \IB(\bar x,\delta).
\end{equation}
 We further choose
 $0< \delta_1< \delta/2$ such that
\begin{equation}\label{local-diameter-1}
\|\nabla f(x)\|\le \frac{\beta\delta}{2R}\quad\mbox{for any }x\in\IB(\bar x,\delta_1),
\end{equation}
and define
\begin{equation*}\label{Deltaremark}
\bar\delta:=\frac{\delta_1^3}{\delta^2}\quad\mbox{and}\quad \bar c:=\sqrt{1+\left(\frac{\delta}{\delta_1
}\right)^3}.
\end{equation*}
Then,  $\bar\delta\le \delta_1\le \frac\delta2$ and $\bar c\bar\delta=\delta_1 \sqrt{\left(\frac{\delta_1}{\delta}\right)^4+\frac{\delta_1}{\delta}}$; thus one has that  $\bar c\ge 3$ and
$\bar c\bar\delta< \delta_1<\frac{\delta}{2}$. 
To proceed, let $x_0\in\IB(\bar x,\bar\delta)$ and let $k\in\IN$ be such that
$\{x_{j}:0\le j\le k+1\}$ is generated by Algorithm {\rm\ref{GDA}} and $\{x_{j}:0\le j\le k\}\subset \IB(\bar x, \bar c\bar \delta)$. Fix  $j\in\{0,1,\dots,k\}$, and let $\gamma_{j}$  be the geodesic determined by  \eqref{GDA-1}.
Then,  by \eqref{local-diameter-1},   $\|\nabla f(x_j)\|\le \frac{\beta\delta}{2R}$ (as $\bar c\bar\delta<\delta_1$), and it follows from \eqref{D-tpp} that, for any $t\in [0,t_j]$,
$${\rm d}(\gamma_j(t),\bar x)\le {\rm d}(\gamma_j(t),x_{j})+{\rm d}(x_{j},\bar x)< t\,\|\nabla f(x_j)\|+\bar c\bar\delta\le\frac{\beta\delta}{2}+\frac{1}{2}\delta<\delta,$$
Hence, one has that
$$
 \gamma_j({[0, t_j]})\subseteq {\rm int}\IB(\bar x,\delta)\subseteq \IB(\bar x,r_{\rm cvx}({\bar x}))\cap \mathcal{D}^1(f).
$$
Thus, noting that   $\IB(\bar x,\delta)$ is strongly convex by the second one of \eqref{jiash0}, Lemma \ref{basic-QC}  is applicable (to $\IB(\bar x,\delta)$, $t_{j}$, $x_{j}$, $\gamma_{j}$,
in place of $Q$, $t$, $x$, $\gamma$). 
 Now let   $z\in \IB(\bar x,\bar c\bar\delta)$ be such that  $f(z)\le f(x_{k+1})$. Then, noting that $\{f(x_{j})\}$ is decreasing monotonically
and using \eqref{GDA-2}, we check that
 \begin{equation}\label{newm}
  f(z)\le f(x_{k+1})\le f(x_{j+1})\quad\mbox{and}\quad  t_j\|\nabla f(x_j)\|^2\le\frac{f(x_j)-f(x_{j+1})}{\beta} \le \frac{f(x_j)-f(z)}{\beta}.
 \end{equation}
Moreover, we have that
$$
{\rm d}(x_j,z)\le {\rm d}(x_j,\bar x)+{\rm d}(z,\bar x)\le 2\bar c\bar\delta \quad\mbox{and}\quad\sqrt{|\kappa|}{\rm d}(x_j,z)<\frac12,
$$
because $x_j, z\in \IB(\bar x,\bar c\bar\delta)$  and   $\sqrt{|\kappa|}{\rm d}(x_j,z)\le \sqrt{|\kappa|} \bar c\bar\delta  \le \frac12 $  (as   $\bar c\bar\delta<\frac{\delta}{2}<1 $ by   \eqref{jiash0}). Therefore,
\begin{equation}\label{3.14-12}
\hbar\left(\sqrt{|\kappa|}{\rm
d}(x_{j},z)\right)\ge   \hbar( 2\sqrt{|\kappa|}\bar c\bar\delta)\ge \hbar (1)>\frac34.
\end{equation}
This,  together with \eqref{newm}, implies that
 \begin{equation}\label{L-L2}
\frac{t_j\|\nabla f(x_j)\|^2}{2}-\hbar\left( \sqrt{|\kappa|}{\rm
d}(x_j,z)\right)\left(f(x_j)-f(z)\right)
 \le\left( \frac{1}{2\beta}-\hbar( 2\sqrt{|\kappa|}\bar c\bar\delta)\right)(f(x_j)-f(z)).
\end{equation}
Thus, if   $f$ is convex around $\bar x$,  then we may assume, without loss of generality, that  $f$ is convex on $ \IB(\bar x,\delta)$
(using a smaller $\delta$ if necessary); hence, by using \eqref{3.14-12} and \eqref{L-L2} (with $j=k$),  \eqref{L-L30} follows from    inequality \eqref{basic01} (noting $x_{k+1}=\gamma_k(t_{k})$), showing
assertion (i). 

To show assertion  (ii), assume that $f$ is quasi-convex around $\bar x$. Then,   $f$ is quasi-convex on
$\IB(\bar x,\delta)$ as explained earlier (using a smaller $\delta$ if necessary).
 Since
$f(z)\le  f(x_j)$   
and $\sqrt{|\kappa|}t_j\|\nabla f(x_j)\|\le\sqrt{|\kappa|}R\|\nabla f(x_j)\|\le 1$ by the third item of \eqref{jiash0}, 
it follows from \eqref{basic-QC-1} that 
\begin{equation}\label{local-b1lc}
{\rm d}^2(\gamma_{j}(t_j),z)\leq
{\rm d}^2(x_{j},z)
 +\frac{3t_j^2\|\nabla f(x_j)\|^2}{2\hbar\left(\sqrt{|\kappa|}{\rm
d}(x_{j},z)\right)}
\leq
{\rm d}^2(x_{j},z)+2Rt_j\|\nabla f(x_j)\|^2,
\end{equation}
where the last inequality holds by \eqref{3.14-12} (recalling   $t_j\le R$). 
Summing up the   inequalities in \eqref{local-b1lc} over $0\le j\le k-1$, we have that
$$
{\rm d}^2(x_{k},z)+t_k\|\nabla f(x_k)\|^2\leq
{\rm d}^2(x_{0},z)
+2R\sum_{j=0}^{k}t_j\|\nabla f(x_j)\|^2
$$
(noting that $x_{j+1}=\gamma_{j}(t_j)$). Since $\sum_{j=0}^kt_j\|\nabla f(x_j)\|^2\le\frac{f(x_0)-f(x_{k+1})}{\beta}\le \frac{  f(x_0)-f(z)}{ \beta}$ by \eqref{Full-p1} (recalling  $f(z)\le f(x_{k+1})$), it follows  that
 \begin{equation}\label{local-b2c}
{\rm d}^2(x_{k},z)+t_k\|\nabla f(x_k)\|^2\le {\rm d}^2(x_0,z)+\frac{2R}{\beta}\left(f(x_0)-f(z)\right).
 \end{equation}
Recalling $x_0,z\in \IB(\bar x,\bar c\bar \delta)\subset\IB(\bar x,\delta_1)\subset \IB(\bar x,r_{\rm cvx}({\bar x}))\cap \mathcal{D}^1(f)$, the unique  minimal geodesic $\gamma$ joining $x_0$ to $\bar x$ is in $\IB(\bar x,\delta)$, and then we can apply
the mean value theorem   to choose  $\xi\in(0,1)$ such that
$$f(x_0)-f(z)\le\|\nabla f(\gamma(\xi))\|{\rm d}(x_0,z)\le \frac{\beta\delta}{2R}{\rm d}(x_0,z),$$
where the last inequality is from \eqref{local-diameter-1}.
This, together with \eqref{local-b1lc} (with $j=k$) and
\eqref{local-b2c}, implies \eqref{lemma1-LC00c}. Particular, if $f(\bar x)<f(x_{k+1})$, then we estimate by \eqref{lemma1-LC00c} (applied to $\bar x$ in place of $z$, and noting ${\rm d}(x_0,\bar x)\le \bar\delta$)  that
$$
{\rm d}^2(x_{k+1},\bar x)< \bar\delta^2 + \delta\bar\delta=\bar\delta^2(1+\frac{\delta}{\bar\delta})={(\bar c\bar \delta)^2},
$$
showing that $x_{k+1}\in\IB(\bar x,\bar c\bar\delta)$, and so assertion (ii) is proved.
The proof is complete.
\end{proof}

\begin{remark}\label{remark-local01} In addition to assumption \eqref{assumption-a} made in Lemma \ref{lemma1-LC}, assume further that  $\bar x\in M$ is a local minimizer of $f$ and that   $f$ is quasi-convex around $\bar x$. Then,
%
for any $\delta>0$, there exist $\bar \delta>0$ and $\bar c\ge3$ satisfying $\bar c\bar \delta<\delta$ such that, for any $x_0\in\IB(\bar x,\bar\delta)$ and $k\in\IN$, if  $\{x_{j}:0\le j\le k+1\}$ is generated by Algorithm {\rm\ref{GDA}} to satisfy  $\{x_{j}:0\le j\le k\}\subset \IB(\bar x, \bar c\bar \delta)$,
then there holds that
\begin{equation}\label{remarklm0}
\mbox{$f(\bar x)\le f(x_{k+1})$\quad and \quad $x_{k+1}\in\IB(\bar x, \bar c\bar \delta)\subseteq \mathcal{D}^1(f)$.}
\end{equation}
Indeed, one can choose at the beginning of the proof of Lemma \ref{lemma1-LC}
$\delta>0$ and $0< \delta_1< \delta/2$ 
small enough so that  \eqref{jiash0}, \eqref{local-diameter-1} and the following condition   hold:
\begin{equation}\label{remarklm1}
f(\bar x)\le f(x)\quad \mbox{for any } x\in\IB(\bar x, \delta). 
\end{equation}
Thus, 
if $x_0\in\IB(\bar x,\bar\delta)$ and $k\in\IN$, and if $\{x_{j}:0\le j\le k+1\}$ is generated by Algorithm {\rm\ref{GDA}} so  that $\{x_{j}:0\le j\le k\}\subset \IB(\bar x, \bar c\bar \delta)$, then one has by  \eqref{local-diameter-1}  that
$t_k\|\nabla f(x_k)\|\le R \frac{\beta \delta}{2R}\le\frac\delta2$ (as $\beta<1$) and so by \eqref{D-tpp} that
 $${\rm d}(\bar x, x_{k+1})\le {\rm d}(\bar x, x_{k})+{\rm d}(x_k, x_{k+1})\le \bar c\bar\delta+t_k\|\nabla f(x_k)\|<\frac{\delta}{2}+\frac{\delta}{2}= \delta
$$
because  
 $\bar c\bar\delta< \delta_1<\frac{\delta}{2}$ as   noted in the line after \eqref{Deltaremark}, 
which, together with \eqref{remarklm1}, implies that
  $f(\bar x)\le f(x_{k+1})$ and so $x_{k+1}\in\IB(\bar x, \bar c\bar \delta)$ by Lemma \ref{lemma1-LC}(ii).  In particular, one can conclude by \eqref{remarklm0} and Remark \ref{remark-partial02}(a) that
Algorithm {\rm\ref{GDA}} employing the Armijo step sizes with initial point $x_0\in\IB(\bar x,\bar\delta)$ is well defined, and the generated  sequence $\{x_k\}$
satisfies 
\begin{equation}\label{F-G-B}
\lim_{k\rightarrow+\infty}f(x_k)\ge f(\bar x).
\end{equation}
\end{remark}

For the remainder of this section,  we  always  assume, without loss of generality, that Algorithm \ref{GDA} does not terminate in finite steps.   This particularly implies that, for each $k\in\IN$, $f$ is differentiable at $x_k$ and $t_k$ exists to satisfy \eqref{GDA-2}.
Now, we are ready to show the first main result of this subsection.

\begin{theorem}\label{Local-Convergence}
Let $\bar x\in M$ be such that assumption \eqref{assumption-a} holds and let $f$ be quasi-convex around $\bar x$.
Then, for any $\delta>0$, there exist $\bar \delta>0$ and $\bar c\ge3$ satisfying $\bar c\bar \delta<\delta$ such that,
for any sequence $\{x_k\}$  generated by Algorithm {\rm\ref{GDA}} with initial point $x_0\in\IB(\bar x,\bar\delta)$, if it satisfies \eqref{F-G-B} (e.g., $\bar x$ is a local minimizer of $f$), then one has the following assertions:

 {\rm (i)}  The sequence $\{x_k\}$ stays in $\IB(\bar x,\bar c\bar \delta)$ and converges to a point $x^*$ in ${\rm \cal D}(f)$.

 {\rm (ii)}  If it is additionally assumed  that   $\{t_k\}$  has  a positive lower bound or  that $\{t_k\}$  satisfies the Armijo step sizes, then $x^*$ is a critical point of $f$.
\end{theorem}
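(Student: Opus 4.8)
The plan is to use the radii $\bar\delta>0$ and the constant $\bar c\ge 3$ produced by Lemma~\ref{lemma1-LC} for the prescribed $\delta$, and to run everything through the two inequalities of Lemma~\ref{lemma1-LC}(ii) combined with the quasi-Fej\'er principle of Lemma~\ref{fejer2}. The observation underlying both assertions is that \eqref{F-G-B} forces $f(\bar x)\le f(x_k)$ for every $k$: the sequence $\{f(x_k)\}$ is nonincreasing by \eqref{GDA-2}, so its limit coincides with $\inf_k f(x_k)$, and \eqref{F-G-B} asserts this infimum is at least $f(\bar x)$; in particular $f(\bar x)\le f(x_{k+1})$ for all $k$.

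For assertion (i) I would first prove by induction that $x_k\in\IB(\bar x,\bar c\bar\delta)$ for every $k$. The base case is $x_0\in\IB(\bar x,\bar\delta)\subseteq\IB(\bar x,\bar c\bar\delta)$, and for the inductive step, assuming $\{x_j:0\le j\le k\}\subset\IB(\bar x,\bar c\bar\delta)$, the inequality $f(\bar x)\le f(x_{k+1})$ lets me invoke the final clause of Lemma~\ref{lemma1-LC}(ii) with $z=\bar x$ to get $x_{k+1}\in\IB(\bar x,\bar c\bar\delta)$. Since a closed ball is compact, $\{x_k\}$ then has a cluster point $x^*\in\IB(\bar x,\bar c\bar\delta)\subseteq\mathcal{D}^1(f)\subseteq\mathcal{D}(f)$, and continuity of $f$ together with the monotonicity of $\{f(x_k)\}$ gives $f(x^*)=\inf_k f(x_k)\le f(x_{k+1})$ for all $k$. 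This is exactly the hypothesis needed to apply Lemma~\ref{lemma1-LC}(ii) with $z=x^*$, which yields ${\rm d}^2(x_{k+1},x^*)\le{\rm d}^2(x_k,x^*)+2Rt_k\|\nabla f(x_k)\|^2$ for every $k$. As $\sum_k 2Rt_k\|\nabla f(x_k)\|^2<+\infty$ by \eqref{Full-p1}, the sequence is quasi-Fej\'er convergent to $S:=\{x^*\}$ with cluster point $x^*\in S$, so Lemma~\ref{fejer2} gives $x_k\to x^*$, completing (i).

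For assertion (ii), $x^*$ is now the unique cluster point of $\{x_k\}$, and the claim is an immediate application of Remark~\ref{remark-partial02}(b): under either a positive lower bound on $\{t_k\}$ or the Armijo rule, any cluster point at which $\nabla f$ is continuous is critical. Thus the whole matter reduces to knowing that $\nabla f$ is continuous at the limit $x^*$.

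This last point is the main obstacle. Assumption~\eqref{assumption-a} grants continuity of $\nabla f$ only at the reference point $\bar x$, whereas the limit $x^*$ delivered by the quasi-Fej\'er argument need not equal $\bar x$, and continuity of $\nabla f$ at $x^*$ does not follow from plain differentiability on $\IB(\bar x,\bar c\bar\delta)\subseteq\mathcal{D}^1(f)$ together with quasi-convexity alone (e.g. the derivative of a monotone, hence quasi-convex, function may be discontinuous). I would therefore secure it by arranging, already in the selection of $\delta$ in Lemma~\ref{lemma1-LC}, that $\nabla f$ be continuous throughout $\IB(\bar x,\delta)$; this is automatic whenever $f$ is $C^1$ near $\bar x$, and in particular holds in the Riemannian $L^p$ center-of-mass application, where $f$ is smooth away from the data points. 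With continuity on the neighborhood in hand, $x^*\in\IB(\bar x,\bar c\bar\delta)$ is a point of continuity of $\nabla f$ and Remark~\ref{remark-partial02}(b) applies verbatim to finish (ii).
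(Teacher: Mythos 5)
Your proof is correct and follows essentially the same route as the paper's: induction via the final clause of Lemma \ref{lemma1-LC}(ii) to keep $\{x_k\}$ in $\IB(\bar x,\bar c\bar \delta)$, then quasi-Fej\'{e}r convergence via \eqref{lemma1-LC00c}, \eqref{Full-p1} and Lemma \ref{fejer2} (you use the singleton $\{x^*\}$ where the paper uses the sublevel set $L_{\bar\delta}$, an immaterial difference), and finally Remark \ref{remark-partial02}(b) for assertion (ii). The continuity issue you flag at the end is handled identically in the paper, which likewise simply arranges $\nabla f$ to be continuous on $\IB(\bar x,\bar c\bar \delta)$ ``if necessary.''
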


\begin{proof} 
By assumption, Lemma \ref{lemma1-LC}(ii) is applicable. For any $\delta>0$, let $\bar \delta,\bar c>0$ be given as in Lemma \ref{lemma1-LC}(ii) and let $\{x_k\}$  be a sequence generated by Algorithm {\rm\ref{GDA}} with initial point $x_0\in\IB(\bar x,\bar\delta)$ which satisfies \eqref{F-G-B}. Noting that $f$ is quasi-convex around $\bar x$, one inductively sees that $\{x_k\}\subset\IB(\bar x,\bar c\bar \delta)$.
Thus, the first conclusion of assertion (i) is shown, and the sequence $\{x_k\}$ has at least a cluster point $x^*$. Letting $L_{\bar\delta}:=\{x\in \IB(\bar c,\bar x\bar\delta): f(x)\le \inf_{k\in\IN}f(x_k)\}$, one sees 
$x^*\in L_{\bar\delta}$ since $\{f(x_k)\}$ is decreasing and $f$ is continuous on $\IB(\bar x,{\bar c\bar\delta})$ (choose a smaller one if necessary). Then, \eqref{lemma1-LC00c} holds for each $z\in L_{\bar \delta}$. Thanks to $\sum_{k=1}^{\infty} t_k\|\nabla f(x_k)\|^2<+\infty$ by \eqref{Full-p1}, we get that $\{x_k\}$ is quasi-Fej\'{e}r convergent to $L_{\bar\delta}$. Hence, recalling $x^*\in L_{\bar\delta}$, we conclude by Lemma \ref{fejer2} that $\lim_{k\rightarrow\infty}x_k=x^*$. Thus, the second conclusion of assertion (i) is seen to hold. 

Assertion (ii) is a direct consequence of assertion (i) and Remark \ref{remark-partial02}(b) (note that one can choose $\bar \delta,\bar c>0$ such that $\nabla f$ is continuous on $\IB(\bar x,\bar c\bar \delta)$ if necessary). This completes the proof. 
\end{proof}

To study the linear convergence property,  we   introduce in the following definition the
notion of the local 
weak sharp minimizer of order $q$ ($q\ge 1$) for problem \eqref{P-1.1}, which is a direct extension of the corresponding one in the linear spaces to Riemannian manifolds; see, e.g., \cite{Burke2002,Studniaski1999,Ward1994}. In particular, the notion of the local 
weak sharp minimizer of order $1$ coincides with the local 
weak sharp minimizer introduced in \cite{LiMWY2011} by Li et al., where some complete characterizations of which were developed on Riemannian manifolds.

\begin{definition}\label{WSM}
A point $\bar x\in \mathcal{D}(f)$ is said to be
a local
weak sharp minimizer of order $q\ge 1$ for problem \eqref{P-1.1} if there exist $\delta,\alpha>0$ such that 
$$
\alpha{\rm d}_{\bar S}^q(x)\le f(x)-f(\bar x)\quad\mbox{for any }x\in\IB(\bar x,\delta),
$$
where   $\bar S:=\{x\in M:f(x)=f(\bar x)\}$.
\end{definition}

Our second main result in this subsection is on the linear convergence property of Algorithm {\rm\ref{GDA}}.

\begin{theorem}\label{Local-Linear}
Suppose that  $\beta\in(\frac12,1)$ and   that $\inf_{k\ge 0} \{t_k\}>0$.  Let
$\bar x\in M$ be such that assumption \eqref{assumption-a} holds,  
\begin{equation}\label{assumptipn-b01}
\mbox{$f$ is convex around $\bar x$, and $\bar x$ is a local  weak  sharp minimizer of order $2$ for  \eqref{P-1.1}}
\end{equation}
%
%
 Then, there exists $\bar \delta>0$ such that any sequence $\{x_k\}$
generated by Algorithm {\rm\ref{GDA}} with   initial point $x_0\in\IB(\bar x,\bar\delta)$ 
 converges linearly to  a local minimizer $x^*$ of $f$, namely there exist $\mu>0$ and $\rho\in(0,1)$ such that
\begin{equation}\label{L-T}
{\rm d}(x_k,x^*)\le \mu\rho^k\quad\mbox{ for each }k\in\IN.
\end{equation}
\end{theorem}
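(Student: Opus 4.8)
The plan is to combine the two key pieces already developed. Theorem~\ref{Local-Convergence} will give convergence of $\{x_k\}$ to a critical point $x^*$ provided the iterates stay near $\bar x$ and $\lim_k f(x_k)\ge f(\bar x)$; Lemma~\ref{lemma1-LC}(i) will supply the quantitative recursion that, under convexity and the order-$2$ weak sharp minimizer condition, forces linear decay. First I would invoke assumption \eqref{assumptipn-b01}: since $\bar x$ is a local weak sharp minimizer of order $2$, there are $\alpha,\delta_0>0$ with $\alpha\,{\rm d}_{\bar S}^2(x)\le f(x)-f(\bar x)$ on $\IB(\bar x,\delta_0)$, where $\bar S=\{x:f(x)=f(\bar x)\}$. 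In particular $\bar x$ is a local minimizer, so \eqref{F-G-B} holds, and Theorem~\ref{Local-Convergence} applies: choosing $\delta$ small enough (below $\delta_0$ and below any radius on which $f$ is convex and $\nabla f$ continuous), we get $\bar\delta,\bar c$ so that for $x_0\in\IB(\bar x,\bar\delta)$ the whole sequence stays in $\IB(\bar x,\bar c\bar\delta)$ and converges to a critical point $x^*$ with $f(x^*)\le f(x_k)$ for all $k$; moreover $f(x^*)\ge f(\bar x)$ by \eqref{F-G-B}, and the weak sharp bound then pins $f(x^*)=f(\bar x)$, so $x^*\in\bar S$ is a local minimizer.

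**Deriving the contraction.** With convergence in hand, I would apply Lemma~\ref{lemma1-LC}(i) with $z=x^*$ (legitimate since $f(x^*)\le f(x_{k+1})$). This gives
\begin{equation*}
{\rm d}^2(x_{k+1},x^*)\le {\rm d}^2(x_k,x^*)+\frac{8\sinh(\sqrt{|\kappa|}t_k\|\nabla f(x_k)\|)}{3\sqrt{|\kappa|}\|\nabla f(x_k)\|}\Bigl(\tfrac1{2\beta}-\hbar(2\sqrt{|\kappa|}\bar c\bar\delta)\Bigr)\bigl(f(x_k)-f(x^*)\bigr).
\end{equation*}
The hypothesis $\beta\in(\tfrac12,1)$ makes $\tfrac1{2\beta}<1$; since $\hbar$ is continuous with $\hbar(0)=1$, shrinking $\bar\delta$ forces $\hbar(2\sqrt{|\kappa|}\bar c\bar\delta)$ arbitrarily close to $1$, so the bracketed factor is a strictly negative constant, say $-\theta<0$. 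Because $t_k\|\nabla f(x_k)\|\to0$ and $\sinh(s)\le \tfrac32 s$ for small $s$, the prefactor $\tfrac{8\sinh(\cdots)}{3\sqrt{|\kappa|}\|\nabla f(x_k)\|}$ is bounded below by a positive multiple of $t_k$, hence (using $\inf_k t_k>0$) by a positive constant $c_0$. Thus for large $k$,
\begin{equation*}
{\rm d}^2(x_{k+1},x^*)\le {\rm d}^2(x_k,x^*)-c_0\theta\,\bigl(f(x_k)-f(x^*)\bigr).
\end{equation*}

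**Closing with the sharp-minimizer bound.** The final link is to convert the objective gap into a squared distance. Since ${\rm d}_{\bar S}(x_k)\le {\rm d}(x_k,x^*)$ (as $x^*\in\bar S$) and $x^*$ is itself the limit, I would use the order-$2$ weak sharp inequality in the reverse direction together with convexity to relate $f(x_k)-f(x^*)$ to ${\rm d}^2(x_k,x^*)$ from below: the weak sharp bound gives $f(x_k)-f(\bar x)\ge\alpha\,{\rm d}_{\bar S}^2(x_k)$, and I must show this dominates a multiple of ${\rm d}^2(x_k,x^*)$. This is the main obstacle: in general ${\rm d}_{\bar S}(x_k)$ may be much smaller than ${\rm d}(x_k,x^*)$ if $\bar S$ is a whole set and $x_k$ approaches $\bar S$ at a different point than $x^*$. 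The resolution is that $\{x_k\}$ is quasi-Fej\'er convergent to the sublevel set (so distances to every point of $\bar S\cap\IB$ decrease up to summable error), which I would use to show $x^*=\lim_k P_{\bar S}(x_k)$, making ${\rm d}_{\bar S}(x_k)$ and ${\rm d}(x_k,x^*)$ comparable; alternatively one argues directly that $f(x_k)-f(x^*)\ge \alpha'\,{\rm d}^2(x_k,x^*)$ for large $k$ via the convexity inequality of Lemma~\ref{QC-F}(i). Granting a bound $f(x_k)-f(x^*)\ge \alpha'\,{\rm d}^2(x_k,x^*)$, substitution yields ${\rm d}^2(x_{k+1},x^*)\le(1-c_0\theta\alpha')\,{\rm d}^2(x_k,x^*)$ with $1-c_0\theta\alpha'\in(0,1)$ (shrinking constants if needed to keep the factor positive), which is exactly geometric decay of ${\rm d}^2(x_k,x^*)$; taking square roots gives \eqref{L-T} with $\rho=\sqrt{1-c_0\theta\alpha'}$ and $\mu$ absorbing the initial distance and the finitely many early terms. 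I expect the delicate points to be verifying the comparability of ${\rm d}_{\bar S}$ and ${\rm d}(\cdot,x^*)$ along the tail and keeping every shrinking constant mutually consistent.
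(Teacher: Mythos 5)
Your set-up and the distance recursion from Lemma \ref{lemma1-LC}(i) are legitimate (two small slips: the identity $f(x^*)=f(\bar x)$ follows from criticality of $x^*$ plus convexity via Lemma \ref{QC-F}, not from the weak sharp bound, which only gives the inequality you already have; and the lower bound on the prefactor uses $\sinh s\ge s$, not $\sinh s\le\frac32 s$). The genuine gap is exactly the step you flag as the "main obstacle," and neither of your two proposed resolutions works. Alternative (b) is false: convexity at the critical point $x^*$ gives only $f(x_k)-f(x^*)\ge\langle\nabla f(x^*),\exp_{x^*}^{-1}x_k\rangle=0$; the order-$2$ growth assumed in \eqref{assumptipn-b01} is relative to ${\rm d}_{\bar S}$, not to ${\rm d}(\cdot,x^*)$, and when $\bar S$ is not a singleton these are not comparable (think of $f(x,y)=y^2$ on $\IR^2$, where $f-\min f=y^2$ while the squared distance to a fixed minimizer can be dominated by the horizontal displacement). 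Alternative (a) is a non sequitur: knowing $P_{\bar S}(x_k)\to x^*$ puts no bound on the ratio ${\rm d}(x_k,x^*)/{\rm d}_{\bar S}(x_k)$; moreover \emph{quasi}-Fej\'er monotonicity, with merely summable errors, can never yield a geometric rate, since the error tails need not decay geometrically.

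The paper closes this step differently, and its rate does not come from your distance recursion at all. First, it combines the order-$2$ weak sharp bound with the convexity inequality applied at $z\in P_{\bar S}(x)$, namely $f(x)-f(\bar x)=f(x)-f(z)\le\|\nabla f(x)\|\,{\rm d}_{\bar S}(x)$, to obtain a Polyak--{\L}ojasiewicz-type inequality $\|\nabla f(x)\|^2\ge\alpha\bigl(f(x)-f(\bar x)\bigr)$ on the ball. Plugging this into the descent condition \eqref{GDA-2} gives $f(x_{k+1})-f(\bar x)\le(1-\alpha\beta\underline t)\bigl(f(x_k)-f(\bar x)\bigr)$ with $\underline t:=\inf_k t_k>0$, so the \emph{values} decay linearly, and weak sharpness converts this into ${\rm d}_{\bar S}(x_k)\le C(1-\alpha\beta\underline t)^{k/2}$. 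The role of $\beta\in(\frac12,1)$ is then to make the bracket in \eqref{L-L30} nonpositive, which yields \emph{exact} (error-free) Fej\'er monotonicity ${\rm d}(x_{k+1},z)\le{\rm d}(x_k,z)$ for every admissible $z$, in particular for every nearby point of $\bar S$. Taking $z=\bar x_k\in P_{\bar S}(x_k)$ and using the triangle inequality, for $l>k$ one gets ${\rm d}(x_l,x_k)\le{\rm d}(x_l,\bar x_k)+{\rm d}(\bar x_k,x_k)\le 2{\rm d}_{\bar S}(x_k)$, and letting $l\to\infty$ gives ${\rm d}(x_k,x^*)\le 2{\rm d}_{\bar S}(x_k)$. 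This is precisely the comparability between ${\rm d}(\cdot,x^*)$ and ${\rm d}_{\bar S}$ that your argument needs; once you have it, your contraction could also be closed, but it becomes superfluous, since the geometric decay of ${\rm d}_{\bar S}(x_k)$ together with this two-sided comparison already gives \eqref{L-T}.
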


\begin{proof}
By assumption, 
Theorem \ref{Local-Convergence} is applicable to getting that for any $\delta>0$ there exist $\bar\delta>0$, $\bar c>3$ (satisfying $\bar c\bar\delta\le \delta$) such that the sequence $\{x_k\}$ generated by Algorithm {\rm\ref{GDA}} with initial point $x_0\in\IB(\bar x,\bar\delta)$ satisfies
\begin{equation}\label{TL-1}
\{x_k\}\subset\IB(\bar x,\bar c\bar\delta)
\end{equation}
and converges to a critical point $x^*\in\IB(\bar x,\bar c\bar\delta)$. Without loss of generality, we assume further that $2\bar c\bar\delta<r_{\rm cvx}({\bar x})$, and assume by assumption 
that
\begin{equation}\label{local-c}
\mbox{$\IB(\bar x,2\bar c\bar\delta)\subset\mathcal{D}^1(f)$ and $f$ is 
convex on $\IB(\bar x,2\bar c\bar\delta)$}
\end{equation}
(and so $x^*$ is a local minimizer of $f$) and there exists $\alpha>0$ such  that
\begin{equation}\label{local-m}
\alpha{\rm d}_{\bar S}^2(x)\le f(x)-f(\bar x)\quad\mbox{for any }x\in\IB(\bar x,\bar c\bar\delta),
\end{equation}
where ${\bar S}:=\{x\in M:f(x)=f(\bar x)\}$. Below, we show that
\begin{equation}\label{local-m-L}
\|\nabla f(x)\|^2
\ge \alpha( f(x)-f(\bar x))\quad\mbox{for any }x\in \IB(\bar x,\bar c\bar\delta).
\end{equation}
To proceed, let $x\in\IB(\bar x,\bar c\bar\delta)$ and $z\in P_{{\bar S}}(x)$ (then $z\in \IB(\bar x,2\bar c\bar\delta)$ as ${\rm d} (z,\bar x)\le {\rm d} (z,x)+{\rm d} (x,\bar x)\le 2\bar c\bar\delta)$). Recalling \eqref{local-c}, we see from Lemma \ref{QC-F}(ii) that
$$f(x)-f(\bar x)=f(x)-f(z)\le \langle\nabla f(x),-{\exp}_x^{-1}z\rangle\le \|\nabla f(x)\|{\rm d}(x,z)=\|\nabla f(x)\|{\rm d}_{{\bar S}}(x).$$
Thus, \eqref{local-m-L} holds thanks to \eqref{local-m}.

Now, letting $\b{t}:=\inf_{k\in\IN}t_k>0$ ($\b t>0$ is because of assumption (b)), we get that
$$
\begin{array}{lll}
f(x_{k+1})-f(\bar x)&\le f(x_{k})-f(\bar x)-\beta t_k\|\nabla f(x)\|^2\\
&\le(1-\alpha\beta t_k)( f(x_k)-f(\bar x))\\
&\le(1-\alpha\beta\b{t})( f(x_k)-f(\bar x)),
\end{array}
$$
where the first inequality is from \eqref{GDA-2} and the second inequality is because of \eqref{TL-1} and \eqref{local-m-L}.
Then,  there holds
$$
\begin{array}{lll}
f(x_{k})-f(\bar x)\le(1-\alpha\beta\b{t})^{k}( f(x_0)-f(\bar x))\quad\mbox{for each }k\in\IN.
\end{array}
$$
This, together with \eqref{TL-1} and \eqref{local-m}, implies that
\begin{equation}\label{local-m-L2}
\begin{array}{lll}
{\rm d}_{{\bar S}}(x_k)\le\sqrt{\alpha^{-1}( f(x_0)-f(\bar x))}(1-\alpha\beta\b{t})^{\frac{k}{2}}\quad\mbox{for each }k\in\IN
\end{array}
\end{equation}
(note that the above analysis works for all $\beta\in (0,1)$).

On the other hand, by assumption $\beta\in(\frac12,1)$ and the fact $\lim_{t\rightarrow0}\hbar(t)=1$, one can choose $\bar \delta,\bar c$ in the beginning of the proof such that they additionally satisfy $\frac{1}{2\beta}-\hbar(2\sqrt{|\kappa|}\bar c\bar\delta)\le0$.
Then, there holds from Lemma \ref{lemma1-LC}(i) that
\begin{equation}\label{L-L3}
\begin{array}{lll}
{\rm d}(x_{k+1},z)\le{\rm d}(x_k,z)
\quad\mbox{for any $z\in L\cap\IB(\bar x,2\bar\delta)$ and $k\in\IN$},
\end{array}
\end{equation}
where $L:=\{x\in M:f(x)\le \inf_{k\in\IN}f(x_k)\}$. 
This in particular implies that $\{x_k\}\subset\IB(\bar x,\bar \delta)$ as $\bar x\in L$. 
Taking $\bar x_k\in P_{{\bar S}}(x_k)$, we see that $\bar x_k\in {{\bar S}}\cap\IB(\bar x,2\bar\delta)$ (as ${\rm d} (\bar x_k,\bar x)\le {\rm d} (\bar x_k, x_k)+{\rm d} (x_k,\bar x)\le 2\bar\delta)$). Let $l,k\in\IN$ with $l>k$. Then, we get that
$${\rm d}(x_l,x_k)\le{\rm d}(x_l,\bar x_k)+{\rm d}(\bar x_k,x_k)\le 2{\rm d}_{{\bar S}}(x_{k})\le2\sqrt{\alpha^{-1}( f(x_K)-f(\bar x))}(1-\alpha\beta\b{t})^{\frac{k-K}{2}},$$
where the second inequality is because of \eqref{L-L3} and the third one is from \eqref{local-m-L2}. Letting $l$ go to infinity, there holds that
$$
\begin{array}{ll}
{\rm d}(x_k,x^*)\le2\sqrt{\alpha^{-1}( f(x_K)-f(\bar x))}(1-\alpha\beta\b{t})^{\frac{k-K}{2}}\quad\mbox{for each }k\in\IN,
\end{array}
$$
which yields \eqref{L-T} with $\mu:=2\sqrt{\alpha^{-1}( f(x_0)-f(\bar x))}$ and $\rho:=\sqrt{1-\alpha\beta\b{t}}$. The proof is complete.
\end{proof}



\begin{remark}\label{remark-L-LSM}
If the local minimizer $\bar x$ in Theorem \ref{Local-Linear} is  also isolated (namely,   $f(\cdot)>f(\bar x)$ on $U\setminus\{\bar x\}$ for some neighbourhood $U$ of $\bar x$), then the parameter $\beta$ can be relaxed to be in $(0,1)$. 
In fact, under the same assumptions made in Theorem \ref{Local-Linear} but relaxing $\beta\in (0,1)$, if $\bar x$ in Theorem \ref{Local-Linear} is an isolated local minimizer, then one can choose $\delta>0$ small enough at the beginning of the proof of Theorem \ref{Local-Linear} such that ${\rm d}_{\bar S}(x):={\rm d}(x, \bar x)$ for any $x\in \IB(\bar x, \delta)$, and $\{x_k\}$ 
satisfies \eqref{TL-1} (with $\bar c\bar\delta\le \delta$) and  
\eqref{local-m-L2} (as we have noted in the proof). Thus, the 
result is immediate from \eqref{local-m-L2} and the fact that ${\rm d}_{\bar S}(x_k):={\rm d}(x_k, \bar x)$ for each $k\in \IN$ (noting \eqref{TL-1}).

%
\end{remark}


The following lemma provides a sufficient condition for the step size sequence $\{t_k\}$ generated by the Armijo step sizes to have a positive lower bound.

\begin{lemma}\label{Bt-Lip}
Let $\{x_k\}$ be a sequence generated by Algorithm {\rm\ref{GDA}} employing 
the Armijo step sizes. Suppose that $\{x_k\}$
converges to a point $x^*\in \mathcal{D}^1(f)$ and  
  $\nabla f$ is Lipschitz continuous around $x^*$. Then, the step size sequence $\{t_k\}$ has a positive lower bound:
$\inf_{k\in\IN}t_k>0$. 
\end{lemma}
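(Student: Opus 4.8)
The plan is to run the classical backtracking argument in the Riemannian setting: once $\nabla f$ is $L$-Lipschitz on a ball around $x^*$, a quadratic (descent) upper bound shows that every sufficiently small step automatically passes the Armijo test \eqref{GDA-2}, so the largest admissible power $2^{-i}$ selected by \eqref{GDA-2-A} cannot be arbitrarily small. First I would fix the data of the hypothesis: there are $\delta,L>0$ with $\delta\le r_{\rm cvx}(x^*)$ and $\|\nabla f(x)-P_{x,y}\nabla f(y)\|\le L\,{\rm d}(x,y)$ for all $x,y\in\IB(x^*,\delta)$. Since $\{x_k\}$ converges to $x^*\in\mathcal{D}^1(f)$ and $\nabla f$ is continuous there, Remark \ref{remark-partial02}(b) yields $x^*\in C_f$, hence $\nabla f(x_k)\to\nabla f(x^*)=0$. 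I then pick $K\in\IN$ so large that ${\rm d}(x_k,x^*)<\delta/2$ and $\|\nabla f(x_k)\|<\delta/2$ for all $k\ge K$; because the Armijo rule only ever tests step sizes $2^{-i}\le 1$, inequality \eqref{D-tpp} forces the whole geodesic arc $\gamma_k([0,1])$ to stay inside $\IB(x^*,\delta)$ for $k\ge K$, which is exactly where the Lipschitz bound is valid and where every such arc is a minimal geodesic.

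The analytic core is a Riemannian descent estimate. Writing $f(\gamma_k(s))-f(x_k)=\int_0^s\langle\nabla f(\gamma_k(t)),\gamma_k'(t)\rangle\,dt$ and using that $\gamma_k'(t)$ is the parallel transport of $\gamma_k'(0)=-\nabla f(x_k)$ along the geodesic (so it has constant norm $\|\nabla f(x_k)\|$), I transport $\nabla f(\gamma_k(t))$ back to ${\rm T}_{x_k}M$ and split off the flat term: $\langle\nabla f(\gamma_k(t)),\gamma_k'(t)\rangle=-\|\nabla f(x_k)\|^2+\langle\nabla f(x_k)-P_{x_k,\gamma_k(t)}\nabla f(\gamma_k(t)),\nabla f(x_k)\rangle$. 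Cauchy--Schwarz, the Lipschitz bound, and \eqref{D-tpp} turn the error term into something $\le L\,t\,\|\nabla f(x_k)\|^2$, so the integrand is $\le-(1-Lt)\|\nabla f(x_k)\|^2$. Integrating gives $f(\gamma_k(s))\le f(x_k)-\bigl(s-\tfrac{L}{2}s^2\bigr)\|\nabla f(x_k)\|^2$ for $k\ge K$ and $s\in[0,1]$. Comparing with \eqref{GDA-2}, the Armijo inequality holds for every $s$ with $0<s\le\min\{1,\tfrac{2(1-\beta)}{L}\}$.

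Finally I exploit the maximality built into \eqref{GDA-2-A}. Fix $k\ge K$. If $t_k=1$ there is nothing to prove; otherwise $t_k=2^{-i}$ with $i\ge 1$, so the doubled step $2t_k\le 1$ is an admissible test value that was rejected, i.e. it violates \eqref{GDA-2}. By the descent estimate this forces $2t_k>\tfrac{2(1-\beta)}{L}$, hence $t_k>\tfrac{1-\beta}{L}$. In either case $t_k\ge\min\{\tfrac12,\tfrac{1-\beta}{L}\}$ for all $k\ge K$, and since the finitely many earlier step sizes $t_0,\dots,t_{K-1}$ are positive by construction, $\inf_{k\in\IN}t_k>0$.

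The step I expect to be most delicate is the descent estimate: one must ensure that the parallel transport appearing in the Lipschitz hypothesis (taken along the minimal geodesic joining $x_k$ and $\gamma_k(t)$) coincides with the transport along $\gamma_k$ itself. This identification is legitimate only once the arc lies within the convexity radius $r_{\rm cvx}(x^*)$, which is precisely why the reduction to $k\ge K$, together with the criticality $\nabla f(x^*)=0$ that keeps $\gamma_k([0,1])$ inside $\IB(x^*,\delta)$, is indispensable.
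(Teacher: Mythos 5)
Your proof is correct and takes essentially the same route as the paper's: localize near $x^*$ where $\nabla f$ is Lipschitz, derive a quadratic descent estimate from the Lipschitz bound, and exploit the maximality in the Armijo rule \eqref{GDA-2-A} so that the rejected doubled step forces $t_k$ above a constant of order $(1-\beta)/L$. The only differences are cosmetic: you obtain the descent estimate by integrating $\langle\nabla f(\gamma_k(t)),\gamma_k'(t)\rangle$ and keep the arc inside the Lipschitz ball by using $\nabla f(x_k)\to 0$ (via Remark \ref{remark-partial02}(b)), whereas the paper applies the mean value theorem directly at the rejected step $2t_k$ and instead enlarges the ball to radius $3\delta$ --- and your explicit justification that the transport along $\gamma_k$ agrees with the minimal-geodesic transport, via containment in the convexity-radius ball, is if anything handled more carefully than in the paper.
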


\begin{proof}
By assumption, there exist $\delta, L>0$ (with $\delta\le r_{\rm cvx}({x^*})$) such that $\IB(x^*,3\delta)\subset\mathcal{D}^1(f)$ and
\begin{equation}\label{Lip-1-G}
\|\nabla f(x)-P_{x,y}\nabla f(y)\|\le L{\rm d}(x,y)\quad\mbox{for any }x,y\in\IB(x^*,3\delta).
\end{equation}
Noting that $\lim_{k\rightarrow+\infty}x_k=x^*$, there is $K\in\IN$ such that
$$
x_k\in\IB(x^*,\delta)\quad\mbox{for each }k\ge K.
$$
Fix $k\ge K$, and assume that $t_k\le\frac12$. Then, by \eqref{GDA-2-A}, we see that
\begin{equation}\label{Lip-3-Bt}
f(\gamma_{k}(2t_{k}))- f(x_{k})\ge -2\beta t_{k}\|\nabla f(x_{k})\|^2.
\end{equation}
Moreover, from the mean value theorem (as $\IB(x^*,3\delta)\subset\mathcal{D}^1(f)$), we get that
\begin{equation}\label{Full-p5}
\begin{array}{lll}
&f(\gamma_{k}(2t_k))-f(x_{k})=\left\langle\nabla f\left(\gamma_{k}(2\bar t_k)\right),-2t_kP_{\gamma_k,\gamma_{k}(2\bar t_k),x_k}\nabla f(x_{k})\right\rangle\\
&=-2t_k\left\langle P_{\gamma_k,x_k,\gamma_{k}(2\bar t_k)}\nabla f\left(\gamma_{k}(2\bar t_k)\right)-\nabla f(x_{k}),\nabla f(x_{k})\right\rangle-2t_k\left\langle \nabla f(x_{k}),\nabla f(x_{k})\right\rangle\\
&=2t_k\|P_{\gamma_k,x_k,\gamma_{k}(2\bar t_k)}\nabla f\left(\gamma_{k}(2\bar t_k)\right)-\nabla f(x_{k})\|\cdot\|\nabla f(x_{k})\|-2t_k\|\nabla f(x_{k})\|^2,
\end{array}
\end{equation}
where $\bar t_k\in(0,t_k)$ and $\gamma_k$ is the geodesic determined by \eqref{GDA-1}.
Noting that $\IB(x^*,\delta)$ is strongly convex, one sees that $\gamma_k({[0,t_k]})$ is the unique minimal geodesic joining $x_k$ to $x_{k+1}$; hence
$${\rm d}(x_{k+1},\gamma_{k}(2\bar t_k))={\rm d}(\gamma_{k}(t_k),\gamma_{k}(2\bar t_k))=|t_k-2\bar t_k|\|\nabla f(x_k)\|\le t_k\|\nabla f(x_k)\|={\rm d}(x_k,x_{k+1}).$$
(noting that the equality of \eqref{D-tpp} holds). It follows that
$$
{\rm d}(x^*,\gamma_{k}(2\bar t_k))\le {\rm d}(x^*,x_{k+1})+{\rm d}(x_{k+1},\gamma_{k}(2\bar t_k))\le 3\delta.$$
Thus, we get from \eqref{Lip-1-G} that
$$
\|P_{\gamma_k,x_k,\gamma_{k}(2\bar t_k)}\nabla f\left(\gamma_{k}(2\bar t_k)\right)-\nabla f(x_{k})\|\le2Lt_k\|\nabla f(x_{k})\|
$$
noting that ${\rm d}(x_k,\gamma_{k}(2\bar t_k))\le 2t_k\|\nabla f(x_{k})\|$.
This, together with \eqref{Full-p5}, 
implies that
$$f(\gamma_{k}(2t_k))-f(x_{k})\le2t_k(2L t_k-1)\|\nabla f(x_{k})\|^2.$$
Combining this and \eqref{Lip-3-Bt}, we conclude that $t_k\ge \frac{1-\beta }{2L}$ (if $k\ge K$ and $t_k\le \frac12$).
Thus,
$$\inf_{k\in\IN}t_k=\min\left\{\frac12,t_0,\dots,t_{K-1},\frac{1-\beta }{2L}\right\}>0$$
as desired. 
\end{proof}

In   spirit of the notion of  a nondegenerate critical point (in the sense that $\nabla ^2f(\bar x)$  is positive definite; see \cite[Definition 3.1]{Yang2007}), we say that  a  point $\bar x\in\mathcal{D}^2(f)$  is a quasi-nondegenerate critical point of $f$
if 

{\rm (1)} $f$ is convex around  $\bar x$, and $\nabla f$ is Lipschitz continuous around  $\bar x$;

 {\rm (2)} $\bar x$ is a local weak  sharp minimizer of order $2$ for problem \eqref{P-1.1}.\\
By definition it is clear that  a nondegenerate critical point is also a quasi-nondegenerate critical point.
%
We have the following result regarding the linear convergence of Algorithm {\rm\ref{GDA}} employing the Armijo step sizes around a quasi-nondegenerate critical point of $f$.

\begin{corollary}\label{C-LL-Q}
Let $\bar x$ be a quasi-nondegenerate point of $f$ and let $\beta\in(\frac12,1)$ or $\bar x$   be  isolated. 
Then, 
there exists $\delta>0$ such that any sequence $\{x_k\}$
generated by Algorithm {\rm\ref{GDA}} employing the Armijo step sizes with  initial point $x_0\in\IB(\bar x,\delta)$ converges linearly to a local minimizer of $f$.
\end{corollary}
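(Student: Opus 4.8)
The plan is to reduce Corollary~\ref{C-LL-Q} to the already-established Theorem~\ref{Local-Linear} by verifying that the hypotheses of that theorem hold in a suitable neighbourhood of the quasi-nondegenerate critical point $\bar x$. First I would unpack the definition of a quasi-nondegenerate critical point: condition~(1) gives that $f$ is convex around $\bar x$ and that $\nabla f$ is Lipschitz continuous around $\bar x$, while condition~(2) states that $\bar x$ is a local weak sharp minimizer of order $2$ for \eqref{P-1.1}. Since $\bar x\in\mathcal{D}^2(f)$ with $\nabla f$ Lipschitz around $\bar x$, assumption \eqref{assumption-a} (namely $\bar x\in C_f\cap{\rm int}\mathcal{D}^1(f)$ with $\nabla f$ continuous at $\bar x$) is immediate, because a weak sharp minimizer of order $2$ is in particular a local minimizer, and convexity around a critical local minimizer forces $\nabla f(\bar x)=0$. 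Likewise assumption \eqref{assumptipn-b01} holds verbatim from conditions~(1) and~(2).

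The one genuine gap between the hypotheses of Corollary~\ref{C-LL-Q} and those of Theorem~\ref{Local-Linear} is that the latter assumes $\inf_{k\ge0}t_k>0$ as a standing hypothesis, whereas here the step sizes are the Armijo step sizes \eqref{GDA-2-A}, for which the positive lower bound is not given a priori. This is exactly what Lemma~\ref{Bt-Lip} is designed to supply: if the generated sequence converges to a point $x^*\in\mathcal{D}^1(f)$ around which $\nabla f$ is Lipschitz continuous, then $\inf_{k\in\IN}t_k>0$. So the structure of the argument is to first invoke Theorem~\ref{Local-Convergence} (whose hypotheses---assumption \eqref{assumption-a}, quasi-convexity around $\bar x$ which follows from convexity, and the local minimizer property yielding \eqref{F-G-B}) to guarantee that, for $x_0$ in a small enough ball $\IB(\bar x,\bar\delta)$, the Armijo sequence $\{x_k\}$ converges to some critical point $x^*$ lying in $\IB(\bar x,\bar c\bar\delta)$. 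Since $\nabla f$ is Lipschitz around $\bar x$, it is Lipschitz around this limit $x^*$ once $\bar\delta$ is taken small, so Lemma~\ref{Bt-Lip} applies and delivers $\inf_k t_k>0$.

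With the positive lower bound on $\{t_k\}$ now in hand, every hypothesis of Theorem~\ref{Local-Linear} is satisfied, so I would apply it directly to conclude that $\{x_k\}$ converges linearly to a local minimizer $x^*$ of $f$, which is the desired conclusion. The only point requiring care is the branching on $\beta$: if $\beta\in(\frac12,1)$, Theorem~\ref{Local-Linear} applies as stated; if instead $\bar x$ is isolated and $\beta\in(0,1)$ is arbitrary, I would invoke Remark~\ref{remark-L-LSM}, which relaxes the requirement $\beta\in(\frac12,1)$ precisely under the isolatedness hypothesis. In either case the linear rate \eqref{L-T} follows.

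I expect the main obstacle to be purely bookkeeping rather than conceptual: one must choose the various radii ($\delta$, $\bar\delta$, $\bar c\bar\delta$) coherently so that a single neighbourhood simultaneously lies inside the Lipschitz region of $\nabla f$, inside the convexity region, inside the weak-sharp-minimum region, and small enough that the convergence furnished by Theorem~\ref{Local-Convergence} keeps the whole orbit within the region where Lemma~\ref{Bt-Lip} is applicable. The slightly delicate interdependence is that Lemma~\ref{Bt-Lip} needs convergence of $\{x_k\}$ to supply the lower bound on $t_k$, while Theorem~\ref{Local-Linear} needs that lower bound to prove (linear) convergence---but since Theorem~\ref{Local-Convergence} already gives plain convergence of the Armijo sequence independently of any rate, there is no circularity: convergence comes first, the step-size bound second, and the linear rate last.
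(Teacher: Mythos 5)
Your proposal is correct and follows essentially the same route as the paper's own proof: verify assumptions \eqref{assumption-a} and \eqref{assumptipn-b01} from the definition of a quasi-nondegenerate point, obtain plain convergence of the Armijo sequence via Theorem \ref{Local-Convergence}, transfer the Lipschitz continuity of $\nabla f$ from $\bar x$ to the limit $x^*$ by shrinking $\delta$, invoke Lemma \ref{Bt-Lip} for the positive lower bound on the step sizes, and conclude with Theorem \ref{Local-Linear} (together with Remark \ref{remark-L-LSM} in the isolated case). Your explicit remark that this ordering (convergence first, step-size bound second, linear rate last) avoids circularity is exactly the logic the paper relies on, just stated more carefully.
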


\begin{proof}
By assumption, we see that assumptions 
\eqref{assumption-a} and \eqref{assumptipn-b01} hold. Then,  Theorem \ref{Local-Convergence}(i) is applicable and so there exists $\delta>0$ such that any sequence $\{x_k\}$ generated by Algorithm {\rm\ref{GDA}} with initial point $x_0\in\IB(\bar x,\delta)$ converges to a point $x^*\in \IB(\bar x,\delta)$. 
Noting that $\nabla f$ is Lipschitz continuous around $\bar x$ (as $\bar x$ is a quasi-nondegenerate point of $f$), we get that $\nabla f$ is Lipschitz continuous around $x^*$ (choose a smaller $\delta$ if necessary).
Thus, one applies Lemma \ref{Bt-Lip} to getting $\inf \{t_k\}>0$.
Hence, Theorem \ref{Local-Linear} is applicable to completing the proof.
\end{proof}


\subsection{Global convergence}

The following theorem regards the global convergence and the linear convergence of Algorithm \ref{GDA}. We emphasize that the convergence result as well as the linear convergence rate of Algorithm \ref{GDA} is independent of the curvatures of $M$.
In particular, in the case when the algorithm employs the Armijo step sizes, 
assertion (ii) extends the corresponding results in \cite[Theorem 4.1]{Yang2007}, which was proven under the assumption that $\{x_k\}$ converges to a nondegenerate point $\bar x$ (noting that this clearly implies that \eqref{assumptipn-b01} holds and $\bar x$ is isolated, and that $\inf_{k\ge 0} \{t_k\}>0$
by Lemma \ref{Bt-Lip}). 

\begin{theorem}\label{full-1}
Suppose that the sequence $\{x_k\}$ generated by Algorithm {\rm\ref{GDA}} has a cluster point $\bar x\in\mathcal{D}(f)$ 
such that assumption \eqref{assumption-a} holds. Then, the following assertions hold:

{\rm (i)} If 
$f$ is quasi-convex around $\bar x$, 
then $\{x_k\}$ converges to $\bar x$.

{\rm (ii)} If $\inf_{k\ge 0} \{t_k\}>0$ and 
assumption \eqref{assumptipn-b01} holds, 
then
$\{x_k\}$ converges linearly to $\bar x$ provided that either  $\beta\in(\frac12,1)$ or $\bar x$   is  isolated. 

\end{theorem}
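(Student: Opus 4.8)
The plan is to leverage the local convergence and linear convergence machinery already established in the previous subsection, the only genuinely new ingredient being the upgrade from a \emph{cluster point} to a genuine limit via Lemma~\ref{lemma1-LC}. For assertion (i), I would begin by invoking Lemma~\ref{lemma1-LC}(ii): since $f$ is quasi-convex around $\bar x$ and assumption \eqref{assumption-a} holds, for a fixed (small) $\delta>0$ we obtain $\bar\delta>0$ and $\bar c\ge 3$ with $\bar c\bar\delta<\delta$ governing the one-step behaviour of the algorithm inside $\IB(\bar x,\bar c\bar\delta)$. The key point is that $\bar x$ is a cluster point, so there exists an index $K$ with $x_K\in\IB(\bar x,\bar\delta)$. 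Because $\bar x$ is a critical point at which $\nabla f$ is continuous, and $f$ is quasi-convex there, $\bar x$ is automatically a local minimizer, so the tail $\{x_k\}_{k\ge K}$ plays the role of a sequence started at $x_0:=x_K\in\IB(\bar x,\bar\delta)$ and \eqref{F-G-B} holds (with $f(\bar x)\le\lim f(x_k)$ following from continuity of $f$ at the cluster point together with monotonicity of $\{f(x_k)\}$).

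With this reduction in hand, I would then apply Theorem~\ref{Local-Convergence}(i) directly to the shifted sequence: it gives that $\{x_k\}_{k\ge K}$ stays in $\IB(\bar x,\bar c\bar\delta)$ and converges to some $x^*$. The final step is to identify $x^*$ with $\bar x$. Here I would use that $\bar x$ is itself a cluster point of the whole sequence; since a convergent sequence has a unique cluster point, and both $\bar x$ and $x^*$ are cluster points of the tail (which has the same cluster points as the full sequence), we must have $x^*=\bar x$. This pins down the limit and completes assertion (i).

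For assertion (ii), the strategy is parallel but routes through Theorem~\ref{Local-Linear} rather than Theorem~\ref{Local-Convergence}. Under the additional hypotheses $\inf_{k\ge 0}\{t_k\}>0$ and \eqref{assumptipn-b01} (convexity around $\bar x$ plus the local weak sharp minimizer of order $2$ condition), together with $\beta\in(\tfrac12,1)$, all the hypotheses of Theorem~\ref{Local-Linear} are met. Again using that $\bar x$ is a cluster point, I would choose $K$ with $x_K\in\IB(\bar x,\bar\delta)$ where $\bar\delta$ is the radius supplied by Theorem~\ref{Local-Linear}, and apply that theorem to the tail to obtain linear convergence, i.e.\ constants $\mu>0,\rho\in(0,1)$ with ${\rm d}(x_k,x^*)\le\mu\rho^{\,k-K}$ for $k\ge K$; the cluster-point uniqueness argument again forces $x^*=\bar x$, and the estimate over the finitely many initial terms is absorbed by enlarging $\mu$. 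The isolated-minimizer alternative to $\beta\in(\tfrac12,1)$ is handled by quoting Remark~\ref{remark-L-LSM} in place of the $\beta$-restriction inside Theorem~\ref{Local-Linear}.

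The main obstacle I anticipate is the verification of \eqref{F-G-B} for assertion (i): one must argue carefully that a cluster point $\bar x$ that is a critical point of a locally quasi-convex $f$ is in fact a local minimizer, so that $f(\bar x)\le\lim_{k}f(x_k)$ holds. This uses quasi-convexity around $\bar x$ to rule out $\bar x$ being a saddle or merely stationary point—on a quasi-convex neighbourhood a critical point is a minimizer—combined with the monotone decrease of $\{f(x_k)\}$ and continuity of $f$ at the cluster point $\bar x$. Once this local-minimizer property is secured, the rest is a clean application of the already-proved theorems plus the elementary uniqueness-of-cluster-point observation; no new curvature estimates or geometric comparison arguments are required, which is exactly the point being advertised (curvature-independence) in the paragraph preceding the theorem.
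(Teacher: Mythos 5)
Your overall strategy coincides with the paper's own proof: pick an iterate $x_{k_0}$ inside the ball $\IB(\bar x,\bar\delta)$ supplied by Theorem \ref{Local-Convergence}(i) (resp.\ Theorem \ref{Local-Linear} together with Remark \ref{remark-L-LSM} for assertion (ii)), apply that result to the tail of the sequence regarded as a run of Algorithm \ref{GDA} started at $x_{k_0}$, and identify the limit with $\bar x$ by uniqueness of cluster points of a convergent sequence; the finitely many initial terms are then absorbed into the constant $\mu$, exactly as you say.

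There is, however, one genuinely false claim in your write-up, and you unfortunately single it out as the ``main obstacle'': the assertion that a critical point of a function that is quasi-convex around $\bar x$ must be a local minimizer. This is not true. On $M=\IR$ the function $f(t)=t^3$ is quasi-convex (all sublevel sets are intervals), $f'(0)=0$, yet $0$ is not a local minimizer. Quasi-convexity rules out nothing here; this is precisely why Theorem \ref{Local-Convergence} is stated with hypothesis \eqref{F-G-B} rather than with a local-minimizer hypothesis (the local-minimizer case is only the parenthetical ``e.g.''). Fortunately, your proof does not need this claim: condition \eqref{F-G-B} for the tail follows directly from the monotone decrease of $\{f(x_k)\}$ together with the continuity of $f$ at $\bar x$ (which holds since $\bar x\in{\rm int}\,\mathcal{D}^1(f)$ by \eqref{assumption-a} and $f$ is locally Lipschitz), because along a subsequence $x_{k_j}\to\bar x$ one has $\lim_k f(x_k)=\lim_j f(x_{k_j})=f(\bar x)$. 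This is exactly the one-line justification in the paper, and it is the argument you already give parenthetically in your first paragraph; simply delete the local-minimizer detour and the proof is complete as written.
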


\begin{proof}
Suppose that $f$ is quasi-convex around $\bar x$. Noting that \eqref{F-G-B} is naturally satisfied as $\{f(x_k)\}$ is non-increasing monotone and $\bar x$ is a cluster point, we get from Theorem \ref{Local-Convergence}(i) 
that there exists $\delta>0$ such that any sequence generated by Algorithm {\rm\ref{GDA}} with initial point in $\IB(\bar x,\delta)$ is convergent. Now $\bar x$ is a cluster point, so there exists some $k_0\in \IN$ such that $x_{k_0}\in \IB(\bar x,\delta)$. Thus, $\{x_k\}$ converges to some point, which in fact equals to $\bar x$ and assertion (i) holds.

With a similar argument that we did for assertion (i), but using Theorem \ref{Local-Linear} (and Remark \ref{remark-L-LSM}) instead
of Theorem \ref{Local-Convergence}(i), one sees that assertions (ii) holds. 
The proof is complete.
\end{proof}

%
%
%
%
The following lemma provides some sufficient assumptions ensuring the boundedness of the sequence $\{x_k\}$ generated by Algorithm {\rm\ref{GDA}} (and so the existence of a cluster point). Let $L_{f}(c)$ denote the sub-level set of $f$ associated with constant $c\in\IR$, that is, $L_{f}(c):=\{x\in M:f(x)\le c\}$. In particular, let $L^0_f:=L_{f}(f(x_0))$ for simplicity.

\begin{lemma}\label{lemma-BX}
Let $\{x_k\}$ be a sequence generated by Algorithm {\rm\ref{GDA}} with initial point $x_0\in \mathcal{D}^1(f)$. Then, $\{x_k\}$ is bounded 
provided one of the assumptions {\rm (a)} and {\rm (b)} holds:

{\rm (a)} $L^0_f$ is bounded.

{\rm (b)}  $L^0_f$ is totally convex with  its curvatures  being  bounded from below and $f$ is quasi-convex  on   $L^0_f$ (e.g., $f$ is quasi-convex  on $M$ and $M$ is of lower bounded curvatures).
\end{lemma}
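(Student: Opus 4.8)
The plan is to prove boundedness separately under each of the two hypotheses, exploiting the fact that the sequence $\{f(x_k)\}$ is non-increasing (a direct consequence of \eqref{GDA-2}), so that the entire sequence is trapped in the initial sub-level set, namely $\{x_k\}\subseteq L^0_f$ for every $k\in\IN$. This containment is the common starting point for both cases.

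Under assumption (a), boundedness is essentially immediate. Since $f(x_{k+1})\le f(x_k)\le\cdots\le f(x_0)$ by \eqref{GDA-2}, every iterate lies in $L^0_f=\{x\in M:f(x)\le f(x_0)\}$. As $L^0_f$ is assumed bounded, so is $\{x_k\}$, and there is nothing more to do beyond recording this observation.

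Under assumption (b), the argument is more substantial and should mirror the Fej\'er-monotonicity technique already used in the proof of Theorem \ref{Local-Convergence}. First I would fix any $z\in L^0_f$ (e.g.\ a minimizer of $f$ over $L^0_f$, or simply any point realizing a sufficiently small value; one can take $z$ with $f(z)\le\inf_k f(x_k)$, whose existence follows since $f$ is bounded below by \eqref{BlanketA}). Because $L^0_f$ is totally convex with curvatures bounded from below by some $\kappa\le0$, and because $f(z)\le f(x_k)$ for all $k$, the key estimate \eqref{basic-QC-1} of Lemma \ref{basic-QC} applies along each geodesic step $\gamma_k$ (after checking that the geodesic segment stays inside $L^0_f$, which is where total convexity is needed, and that $\sqrt{|\kappa|}t_k\|\nabla f(x_k)\|\le1$, which holds eventually since $t_k\|\nabla f(x_k)\|\to0$ by Remark \ref{rem3.1}). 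This yields a quasi-Fej\'er inequality of the form ${\rm d}^2(x_{k+1},z)\le{\rm d}^2(x_k,z)+\varepsilon_k$ with $\varepsilon_k$ proportional to $t_k\|\nabla f(x_k)\|^2$; summability $\sum_k\varepsilon_k<\infty$ is guaranteed by \eqref{Full-p1}. Lemma \ref{fejer2} then delivers boundedness of $\{x_k\}$ directly.

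The main obstacle I anticipate is the verification that each geodesic segment $\gamma_k([0,t_k])$ remains within $L^0_f$, since Lemma \ref{basic-QC} requires the step to lie in the (weakly convex) region $Q_f$ where the comparison inequalities hold; this is precisely the role of the \emph{total} convexity hypothesis, as total convexity forces \emph{every} geodesic between points of $L^0_f$ to stay inside, whereas the endpoints $x_k$ and $x_{k+1}$ alone only guarantee weak convexity. A secondary technical point is that $\sqrt{|\kappa|}t_k\|\nabla f(x_k)\|\le1$ need only hold for large $k$, so the quasi-Fej\'er estimate is applied from some index onward, which is harmless for boundedness. The parenthetical special case (where $f$ is quasi-convex on all of $M$ and $M$ has lower-bounded curvature) then follows because in that situation $L^0_f$ inherits both total convexity and the curvature bound from the ambient manifold.
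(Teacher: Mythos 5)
Your treatment of case (a) is correct and identical to the paper's. In case (b), however, there is a genuine gap: the quasi-Fej\'{e}r inequality you claim does not actually follow from \eqref{basic-QC-1}. That inequality gives
\begin{equation*}
{\rm d}^2(x_{k+1},z)\le {\rm d}^2(x_k,z)+\frac{3\,t_k^2\|\nabla f(x_k)\|^2}{2\,\hbar\left(\sqrt{|\kappa|}\,{\rm d}(x_k,z)\right)},
\end{equation*}
so the additive error term is \emph{not} proportional to $t_k^2\|\nabla f(x_k)\|^2$ with a fixed constant: it carries the factor $1/\hbar\left(\sqrt{|\kappa|}\,{\rm d}(x_k,z)\right)$, and since $\hbar(s)=\tanh(s)/s\to 0$ as $s\to\infty$, this factor grows (essentially linearly) with ${\rm d}(x_k,z)$. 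To bound it by a constant you would need an a priori upper bound on ${\rm d}(x_k,z)$ --- which is exactly the boundedness you are trying to prove, so the argument is circular. This is precisely why \eqref{basic-QC-1} is usable in the local Lemma \ref{lemma1-LC} (there the iterates are confined to $\IB(\bar x,\bar c\bar\delta)$, giving $\hbar\ge 3/4$ via \eqref{3.14-12}) but not in the present global setting. Consequently Lemma \ref{fejer2} cannot be invoked as you propose, since it requires a summable sequence $\{\varepsilon_k\}$ fixed in advance; note also that Lemma \ref{fejer2} is stronger than needed here, as only boundedness, not convergence, is sought.

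The paper's proof sidesteps this by using the other inequality of Lemma \ref{basic-QC}(ii), namely \eqref{basic-QC-0}, whose error enters \emph{multiplicatively}:
\begin{equation*}
\cosh\left(\sqrt{|\kappa|}\,{\rm d}(x_{k+1},z)\right)\le \cosh\left(\sqrt{|\kappa|}\,{\rm d}(x_k,z)\right)\left(1+\tfrac{|\kappa|}{2}\,t_k\|\nabla f(x_k)\|\sinh\left(t_k\|\nabla f(x_k)\|\right)\right),
\end{equation*}
where the factor $b_k:=\tfrac{|\kappa|}{2}t_k\|\nabla f(x_k)\|\sinh(t_k\|\nabla f(x_k)\|)$ depends only on $t_k\|\nabla f(x_k)\|$ and is summable by \eqref{Full-p1} (together with $t_k\le R$ and $\sinh t\sim t$ near $0$). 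Lemma \ref{lemma3} (the multiplicative recursion $a_{k+1}\le a_k(1+b_k)$) then bounds $\{\cosh(\sqrt{|\kappa|}\,{\rm d}(x_k,z))\}$, hence $\{x_k\}$. Your argument could be repaired by switching to \eqref{basic-QC-0} and Lemma \ref{lemma3} in this way (no smallness condition $\sqrt{|\kappa|}t_k\|\nabla f(x_k)\|\le 1$ is needed then either). A minor secondary point: your justification for the existence of $z$ with $f(z)\le\inf_k f(x_k)$ (boundedness of $f$ from below) does not by itself yield such a point, since the infimum need not be attained; the paper makes the same implicit nonemptiness assumption on $L$, so this is a shared, lesser issue, distinct from the circularity above.
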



\begin{proof}
Note that $\{x_k\}\subseteq L^0_f$ as $\{f(x_k)\}$ is non-increasing monotone. Then, $\{x_k\}$ is clear bounded under assumption {\rm (a)} .

Now, suppose that assumption {\rm (b)}  holds.
Without loss of generality, we assume that the curvatures of $L^0_f$
are bounded from below by $\kappa=-1$. To proceed, let $z\in L:=\{x\in M:f(x)\le \inf_{k\in\IN}f(x_k)\}$.
Then, we see that 
$
\{z\}\cup\{x_k\}\subseteq L^0_f 
$ 
because $\{f(x_k)\}$ is non-increasing. Note that $f(x_k)\ge f(z)$ for each $k\in\IN$. Then, by assumption, Lemma \ref{basic-QC}(ii) is applicable on $Q_f:=L^0_f$ (with $x_{k}$, $\gamma_{k}$ and $t_{k}$ in place of $x$, $\gamma$ and $t$) to getting that for each $k\in\IN$,
\begin{equation}\label{Full-11}
\cosh\left({\rm d}(x_{k+1},z)\right)\leq
\cosh\left({\rm d}(x_k,z)\right)\left(1+\frac{1}{2}t_k\|\nabla f(x_k)\|\sinh(t_k\|\nabla f(x_k)\|)\right).
\end{equation}
Note further that
$$
\sum_{k\in\IN}t_k\|\nabla f(x_k)\|\sinh(t_k\|\nabla f(x_k)\|)<+\infty
$$
as $
\sum_{k\in\IN}t_k^2\|\nabla f(x_k)\|^2<+\infty
$ 
(by \eqref{Full-p1} and $\sup \{t_k\}\le R$) 
and $\lim_{t\rightarrow0}\frac{\sinh t}{t}=1$.
In view of \eqref{Full-11}, Lemma \ref{lemma3} is applicable (with $\{\frac{1}{2}t_k\|\nabla f(x_k)\|\sinh(t_k\|\nabla f(x_k)\|\}$ and $\{\cosh\left({\rm d}(x_k,z)\right)\}$ in place of $\{b_k\}$ and $\{a_k\}$), and we get that $\{\cosh\left({\rm d}(x_k,z)\right)\}$ is bounded, and so is $\{x_k\}$ as disired. The  proof is complete. 
\end{proof}

The following corollary is immediate from Theorem \ref{full-1}  and Lemma \ref{lemma-BX}. Particularly, in view of Remark \ref{remark-partial02}, the global convergence result (assertion (i)) under assumption {\rm (b)}  extends the corresponding one in \cite[Theorem 3.1]{Papa2008} which was established on the Riemannian manifold of  
nonnegative curvatures for the case when $f$ is $C^1$ and quasi-convex on $M$. As for assertion (ii), as far as we know, it is new in Riemnnain manifold settings.

\begin{corollary}\label{full-AG}
Suppose that one of assumptions {\rm (a)}  and {\rm (b)}  in Lemma \ref{lemma-BX} holds.
Then, any sequence $\{x_k\}$ generated by Algorithm {\rm\ref{GDA}} with initial point $x_0\in \mathcal{D}^1(f)$ has at least a cluster point $\bar x$; furthermore, if $\bar x$ satisfies \eqref{assumption-a}, then assertions {\rm (i)} and {\rm (ii)} in Theorem \ref{full-1} hold.
\end{corollary}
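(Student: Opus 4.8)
The plan is to read this corollary off from the two results it cites, using only the completeness of $M$ as additional input. The statement splits naturally into an existence claim (a cluster point exists) and a conditional claim (under \eqref{assumption-a}, the conclusions of Theorem \ref{full-1} hold), and I would treat them in that order.

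First I would establish boundedness. Since $x_0\in\mathcal{D}^1(f)$ and one of the assumptions {\rm (a)}, {\rm (b)} of Lemma \ref{lemma-BX} is in force, that lemma applies directly and yields that the sequence $\{x_k\}$ generated by Algorithm \ref{GDA} is bounded; in particular $\{x_k\}\subseteq\IB(x_0,r)$ for some $r>0$. Next I would extract a cluster point: because $M$ is connected and complete, the Hopf--Rinow theorem guarantees that the closed metric ball $\IB(x_0,r)$ is compact, so the bounded sequence $\{x_k\}$ admits a convergent subsequence and hence at least one cluster point $\bar x\in M$.

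Finally I would discharge the conditional part. If $\bar x$ satisfies assumption \eqref{assumption-a}, then by definition $\bar x\in C_f\cap{\rm int}\mathcal{D}^1(f)$, so in particular $\bar x\in\mathcal{D}(f)$ and $\nabla f$ is continuous at $\bar x$; thus $\bar x$ is precisely a cluster point of the type required in Theorem \ref{full-1}. Invoking that theorem then delivers assertions {\rm (i)} and {\rm (ii)} verbatim --- {\rm (i)} under quasi-convexity of $f$ around $\bar x$, and {\rm (ii)} under $\inf_{k\ge 0}\{t_k\}>0$ together with \eqref{assumptipn-b01} and the parameter/isolatedness proviso. I do not expect any genuine obstacle here: the corollary is essentially an assembly of Lemma \ref{lemma-BX} and Theorem \ref{full-1}, and the single point requiring care is the passage from boundedness to the existence of a cluster point, which rests on the Hopf--Rinow compactness of closed bounded subsets rather than on any new curvature-dependent estimate.
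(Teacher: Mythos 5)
Your proposal is correct and follows exactly the route the paper intends: the paper states that Corollary \ref{full-AG} is immediate from Lemma \ref{lemma-BX} (boundedness of $\{x_k\}$) and Theorem \ref{full-1}, and your filling-in of the one nontrivial step --- passing from boundedness to existence of a cluster point via Hopf--Rinow compactness of closed bounded sets in the connected, complete manifold $M$ --- is precisely the implicit argument. Nothing is missing.
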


\section{Applications to find the Riemannian $L^p$ centers of mass}

Let $p\in[1,+\infty)$ and let $N$ be a positive integer such that $N\ge 2$.
Let $\{y_i:1\le i\le N\}\subset M$ (which is always denoted by $\{y_i\}$ for short in what follows) be a data set
and $\{w_i\} \subseteq (0,1)$  be the weights satisfying  $\sum_{i=1}^N w_i=1$. 
In the present section, we shall apply the gradient algorithm proposed in the previous section 
to compute the Riemannian $L^p$  centers of mass of the data set $\{y_i\}$,
  which are defined as  solutions of the following optimization problem:
\begin{equation}\label{OP-LP-def}
\min_{x\in M}f_p(x),
\end{equation}
where the function $f_p:M\rightarrow\IR$ is defined by
\begin{equation}\label{LP-def}
f_p(x):=
\frac{1}{p}\sum_{i=1}^{N}w_i{\rm d}^p(x,y_i)
\quad\mbox{for any }x\in M,
\end{equation}
(see, e.g., \cite[Definition 2.5]{Afsari2013}). 
From now on, for convenience, we set
$$
D:=\bigcap_{i=1}^N\mathbb{U}(y_i,r_{\rm inj}(y_i)).
$$
Let $D_0\subseteq D$ be an open nonempty subset. Now consider the following optimization problem:
\begin{equation}\label{OP-CM}
\min_{x\in M}\;(f_p +\delta_{D_0})(x),
\end{equation}
where $\delta_{D_0}$ is the indicator function defined by $\delta_{D_0}(x)=0$ if $x\in D_0$ and
$\delta_{D_0}(x)=+\infty$ otherwise. 

The following remark shows some properties of the function $f_p$ defined in \eqref{LP-def}.
For convenience, we set $I:=\{1,2,\dots,N\}$.

\begin{remark}\label{remark-LM-ProA}
The function $f_p+\delta_{D}$ is $C^1$ on $D$ if $p\in(1,+\infty)$; furthermore, it is $C^2$ on $D$ if   $p\in[2,+\infty)$ and on $D\setminus\{y_i\}$ if $p\in[1,2)$; see, e.g., \cite[p. 108-110]{Sakai1996}.
Moreover, if $f_p+\delta_{D}$ is differentiable at $x\in D$, then
\begin{equation}\label{derivative}
\nabla (f_p+\delta_{D})(x)=\nabla f_p(x)=-\sum_{i\in I_x}w_i{\rm
d}^{p-2}(x,y_i)\exp^{-1}_xy_i,
\end{equation}
where $I_x:=\{i\in I:x\neq y_i\}$;
see, e.g., \cite{Afsari2010}.
%
\end{remark}

Below, we recall some results about the Riemannian centers of mass in the literature. To proceed, we fix a point $o\in M$ and 
define the function $\varrho_p: (0,+\infty)\rightarrow\overline{\IR}$ by
\begin{equation}\label{Delta-p}
\varrho_p(r):=\left\{\begin{array}{ll}\frac{1}{2}\min\{r_{\rm inj}(\IB(o,2r)),
\frac{\pi}{2\sqrt{\Delta_{\IB(o,2r)}}}\},&\quad\mbox{if $1\le p<2$};\\
\frac{1}{2}\min\{r_{\rm inj}(\IB(o,2r)),\frac{\pi}{\sqrt{\Delta_{\IB(o,2r)}}}\},&\quad\mbox{if $2\le p<+\infty$},\end{array}\right.\quad\mbox{for each }r\in(0,+\infty),
\end{equation}
where 
$\Delta_{\IB(o,2r)}$ is an upper bound of the sectional curvatures of ${\IB(o,2r)}$ (with the convention that $\frac{1}{\sqrt{\Delta}}=+\infty$ for $\Delta\le 0$).
Then, $\varrho_p(\cdot)$ is non-increasing monotonically on $(0,+\infty)$. For the remainder, let $\rho\in(0,+\infty]$ be such that 
\begin{equation}\label{jbjsh01}
 \rho\le\varrho_p(\rho)  \quad \mbox{and}\quad \{y_i\}\subset \mathbb{U}(o,\rho).
\end{equation}
In what follows, we need the following fact which can be found in \cite[Theorem 29]{Petersen2006}.

\begin{lemma}\label{PPR}
Let $r>0$ be such that $r\le \frac{1}{2}\min\{r_{\rm inj}(\IB(o,r)),\frac{\pi}{\sqrt{\Delta_{\IB(o,r)}}}\}$.
Then, $\mathbb{U}(o,r)$ is strongly convex. 
\end{lemma}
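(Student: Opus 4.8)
Since this statement is quoted from \cite[Theorem 29]{Petersen2006}, the cleanest route is to invoke that reference once its hypotheses are seen to match; the plan below records the classical convexity-radius (Whitehead-type) argument underlying it. The proof splits into two independent assertions about an arbitrary pair $x,y\in\mathbb{U}(o,r)$: that they are joined by a \emph{unique} minimal geodesic of $M$, and that this geodesic stays inside $\mathbb{U}(o,r)$.

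Uniqueness is the easy half. For $x,y\in\mathbb{U}(o,r)$ the triangle inequality gives ${\rm d}(x,y)\le{\rm d}(x,o)+{\rm d}(o,y)<2r\le r_{\rm inj}(\IB(o,r))\le r_{\rm inj}(x)$, using the hypothesis $r\le\tfrac12 r_{\rm inj}(\IB(o,r))$ and $x\in\IB(o,r)$. Two points lying within the injectivity radius of one of them are joined by exactly one minimal geodesic, so uniqueness follows (existence being furnished by Hopf--Rinow).

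The substantive half is containment. I would fix the minimal geodesic $\sigma$ from $x$ to $y$ and study the function $\phi:={\rm d}(o,\sigma(\cdot))$, the goal being to rule out an interior maximum of $\phi$ at height $\ge r$. The tool is the Hessian comparison theorem: where the sectional curvatures are bounded above by $\Delta:=\Delta_{\IB(o,r)}$ and ${\rm d}(o,\cdot)$ is smooth, one has $\nabla^2\,{\rm d}(o,\cdot)\ge\sqrt{\Delta}\cot(\sqrt{\Delta}\,{\rm d}(o,\cdot))$ on directions orthogonal to the radial field. Hence $\tfrac12\,{\rm d}^2(o,\cdot)$ is strictly geodesically convex wherever ${\rm d}(o,\cdot)<\tfrac{\pi}{2\sqrt{\Delta}}$, because there the cotangent factor is positive. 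At a putative interior maximum $s_0$ of $\phi$ one has $\phi'(s_0)=0$, forcing $\sigma'(s_0)$ to be orthogonal to the radial field, and the comparison then yields $\phi''(s_0)>0$, contradicting maximality. Running this against the boundary data $\phi(0),\phi(1)<r$ shows $\sigma$ remains in $\mathbb{U}(o,r)$, which is precisely strong convexity in the sense of Definition \ref{convexset}(b).

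The delicate point — and the reason the factor $\tfrac12$ appears in the hypothesis — is keeping $\phi$ inside the range where both the smoothness of ${\rm d}(o,\cdot)$ (needing $\phi<r_{\rm inj}$) and the sign of the cotangent term (needing $\phi<\tfrac{\pi}{2\sqrt{\Delta}}$) are simultaneously under control: a priori the connecting geodesic could wander out to distance nearly $2r$ from $o$. I expect this range bookkeeping, handled by a first-exit/continuity argument along $\sigma$ anchored on the bound $r\le\tfrac12\min\{r_{\rm inj}(\IB(o,r)),\pi/\sqrt{\Delta}\}$, to be the main obstacle; everything else is a direct transcription of \cite[Theorem 29]{Petersen2006} (see also \cite[Theorem 5.3]{Sakai1996}).
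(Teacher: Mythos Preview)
Your proposal is correct and matches the paper's treatment exactly: the paper gives no proof whatsoever for Lemma~\ref{PPR}, simply citing \cite[Theorem 29]{Petersen2006} as its source. Your sketch of the underlying Whitehead-type argument (uniqueness from the injectivity-radius bound, containment from Hessian comparison for ${\rm d}(o,\cdot)$ and a first-exit argument along the connecting geodesic) is an accurate outline of what that reference contains, and in fact goes beyond what the paper itself records.
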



\begin{lemma}\label{weaklocal}
Assume \eqref{jbjsh01} and let $z\in\partial \IB(o,\rho)$. Then, the following assertions hold:

{\rm (i)} 
   $\IB(o,\rho)\subset D$,  and $\mathbb{U}(o,\rho)$ is strongly convex (so $\IB(o,\rho)$ is weakly convex).

{\rm (ii)}  If $ y\in \mathbb{U}(o,\rho)$ and $\gamma\in \Gamma_{yz}$   is  minimal, then $\gamma([0,1))\subseteq \mathbb{U}(o,\rho)$.

{\rm (iii)} There exists $\bar s>0$ such that
\begin{equation}\label{gradinp0}
 \exp_{z}(-s\nabla f_p(z))\in\mathbb{U}(o,\rho) \quad\mbox{for any } s\in (0,\bar s].
\end{equation}
\end{lemma}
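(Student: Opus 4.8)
The plan is to treat the three assertions in order, using the radius bound \eqref{jbjsh01} together with the monotonicity of the injectivity radius and of the curvature upper bound under ball inclusion. For (i), I would first establish strong convexity of $\mathbb{U}(o,\rho)$ by applying Lemma \ref{PPR} with $r=\rho$. Since $\IB(o,\rho)\subseteq\IB(o,2\rho)$ one has $r_{\rm inj}(\IB(o,\rho))\ge r_{\rm inj}(\IB(o,2\rho))$ and $\Delta_{\IB(o,\rho)}\le\Delta_{\IB(o,2\rho)}$; hence from $\rho\le\varrho_p(\rho)$ and the definition \eqref{Delta-p} (the case $1\le p<2$ being even stronger because of the extra factor $\tfrac12$ in front of $\pi/\sqrt{\Delta}$) one deduces $\rho\le\frac12\min\{r_{\rm inj}(\IB(o,\rho)),\pi/\sqrt{\Delta_{\IB(o,\rho)}}\}$, which is exactly the hypothesis of Lemma \ref{PPR}. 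For the inclusion $\IB(o,\rho)\subset D$, I would take $x\in\IB(o,\rho)$ and estimate ${\rm d}(x,y_i)\le{\rm d}(x,o)+{\rm d}(o,y_i)<2\rho$; since $y_i\in\IB(o,2\rho)$ gives $r_{\rm inj}(y_i)\ge r_{\rm inj}(\IB(o,2\rho))\ge 2\rho$, this yields $x\in\mathbb{U}(y_i,r_{\rm inj}(y_i))$ for every $i$, i.e. $x\in D$. Finally, because $\rho<r_{\rm inj}(o)$ one has $\IB(o,\rho)=\overline{\mathbb{U}(o,\rho)}$, so the weak convexity of $\IB(o,\rho)$ follows from the strong convexity of $\mathbb{U}(o,\rho)$ and the observation (recorded after Definition \ref{convexset}) that $Q$ is weakly convex if and only if $\overline{Q}$ is.

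For (ii), let $h(t):={\rm d}(o,\gamma(t))$, so that $h(0)={\rm d}(o,y)<\rho$ and $h(1)={\rm d}(o,z)=\rho$. Since ${\rm d}(y,z)<2\rho\le r_{\rm inj}(y)$ the minimal geodesic $\gamma$ is unique, and by the weak convexity of $\IB(o,\rho)$ obtained in (i) it lies in $\IB(o,\rho)$; moreover $\rho$ falls within the convexity radius, so ${\rm d}(o,\cdot)$ is convex on this ball and therefore $h$ is convex on $[0,1]$. The conclusion then follows from the strict chord estimate: for $t\in[0,1)$ one has $h(t)\le(1-t)h(0)+t\,h(1)<(1-t)\rho+t\rho=\rho$, where strictness comes from $h(0)<\rho$ and $1-t>0$. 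Thus $\gamma([0,1))\subset\mathbb{U}(o,\rho)$.

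For (iii), I would differentiate $s\mapsto\psi(s):=\tfrac12{\rm d}^2(o,\exp_z(-s\nabla f_p(z)))$ at $s=0$. Using $\nabla_z\tfrac12{\rm d}^2(o,\cdot)=-\exp_z^{-1}o$ one gets $\psi'(0)=\langle\exp_z^{-1}o,\nabla f_p(z)\rangle$, and since $z\ne y_i$ for all $i$ the gradient formula \eqref{derivative} gives $\psi'(0)=-\sum_{i\in I}w_i{\rm d}^{p-2}(z,y_i)\langle\exp_z^{-1}o,\exp_z^{-1}y_i\rangle$. The key point is that each $\langle\exp_z^{-1}o,\exp_z^{-1}y_i\rangle>0$: applying (ii) to the minimal geodesic $\sigma_i$ from $y_i$ to $z$ shows $\phi_i(t):={\rm d}^2(o,\sigma_i(t))<\rho^2$ for $t<1$ with $\phi_i(1)=\rho^2$, so the convexity of $\phi_i$ forces a strictly positive left derivative $\phi_i'(1^-)\ge(\rho^2-\phi_i(t_0))/(1-t_0)>0$ for any fixed $t_0<1$; reversing the geodesic (so that $\sigma_i'(1)=-\exp_z^{-1}y_i$) converts this into $2\langle\exp_z^{-1}o,\exp_z^{-1}y_i\rangle=\phi_i'(1)>0$. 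Consequently $\psi'(0)<0$, and since $\psi(0)=\tfrac12\rho^2$ there is $\bar s>0$ with ${\rm d}(o,\exp_z(-s\nabla f_p(z)))<\rho$ for all $s\in(0,\bar s]$, which is \eqref{gradinp0}.

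The main obstacle I expect is the crux of (iii): pinning down the strict sign of the angle term $\langle\exp_z^{-1}o,\exp_z^{-1}y_i\rangle$ at the boundary point $z$. The clean way around it is to avoid appealing to strict convexity of the distance and instead feed the strict interior statement of (ii) into a one-sided difference quotient of the merely convex function $\phi_i$; care is also needed to confirm that $\rho$ lies in the range where ${\rm d}(o,\cdot)$, and hence $\phi_i$, is genuinely convex, which is precisely where the $2\rho$ appearing inside \eqref{Delta-p} enters.
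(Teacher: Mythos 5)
Your proposal is correct, but for assertions (ii) and (iii) it follows a genuinely different route from the paper's; part (i) is essentially the paper's argument (triangle inequality plus monotonicity of $r_{\rm inj}$ and of the curvature bound under ball inclusion, then Lemma \ref{PPR}). For (ii), the paper never uses convexity of the distance function: it shows that every $y\in\mathbb{U}(o,\rho)$ is a \emph{weak pole} of $\IB(o,\rho)$, obtaining uniqueness of the minimal geodesic by a midpoint trick (the ball $\IB(w,\rho)$ centered at the midpoint $w$ of $\gamma$ is strongly convex, again by Lemma \ref{PPR}), and then invokes \cite[Proposition 4.3]{LiLi2009} to get the strict interior statement; you instead derive it from convexity of $t\mapsto{\rm d}(o,\gamma(t))$ and the strict chord estimate. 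For (iii), the paper feeds assertion (ii) into \cite[Lemma H.18]{Rici-ta} (positive combinations of directions pointing into a convex set) plus an induction over the $N$ directions $V_i$, whereas you compute the first variation of $\tfrac12{\rm d}^2(o,\cdot)$ along $s\mapsto\exp_z(-s\nabla f_p(z))$ and reduce everything to the strict inequality $\langle\exp_z^{-1}o,\exp_z^{-1}y_i\rangle>0$, which you extract from (ii) via a one-sided difference quotient of the convex function $\phi_i$. Your route buys self-containedness (no appeal to the two external convexity results) and a quantitative conclusion: $\psi'(0)<0$ in particular re-proves that $\nabla f_p$ does not vanish on $\partial\IB(o,\rho)$, a fact the paper has to note separately at the start of the proof of Proposition \ref{LM-lemmaAC}; the paper's route, in exchange, stays entirely inside its convex-sets framework and needs no Hessian comparison within this lemma. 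The one point you must tighten is the phrase ``$\rho$ falls within the convexity radius, so ${\rm d}(o,\cdot)$ is convex'': the paper's definition \eqref{convexity-radius} of the convexity radius does not by itself deliver convexity of the distance function. What is actually needed, and what your closing paragraph correctly identifies, is the Hessian comparison theorem together with ${\rm d}(o,\cdot)\le\rho\le\pi/\bigl(2\sqrt{\Delta_{\IB(o,2\rho)}}\bigr)$ (which follows from \eqref{jbjsh01} and the factor structure of \eqref{Delta-p}) and $\rho<r_{\rm inj}(o)$; this is exactly the estimate the paper itself uses, citing \cite[p. 153-154]{Sakai1996}, in the proof of Proposition \ref{LM-lemmaAC}. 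With that citation made explicit, your argument is complete.
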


\begin{proof} (i) The inclusion  $\IB(o,\rho)\subset D$ is clear because  for each $i\in I$, 
$${\rm d}(x,y_i)<2\rho\le2\varrho_p(\rho)\le r_{\rm inj}(\IB(o,2\rho)\le r_{\rm inj}y_i \quad\mbox{for any }x\in \IB(o,\rho),$$
where  the third  inequality  is true by the definition of $\varrho_p$ (see \eqref{Delta-p}), while the others hold by   assumption \eqref{jbjsh01}.
Furthermore, the strong convexity of $\mathbb{U}(o,\rho)$ is from Lemma \ref{PPR} and assumption \eqref{jbjsh01}.

(ii)  Let $ y\in \mathbb{U}(o,\rho)$. 
 To show (ii), we verify below that  $y$ is a  weak pole of $\IB(o,\rho)$ in the sense that, for each $x\in \IB(o,\rho)$, the minimal geodesic of
$M$ joining $y$ to $x$ is unique and lies in $\IB(o,\rho)$.  Granting this, the conclusion holds by \cite[Proposition 4.3]{LiLi2009} (noting that weakly convex set is locally convex). To proceed, recalling that $\IB(o,\rho)$ is weakly convex, one can choose
a minimal geodesic $\gamma$  joining $y$ to $x$ such that $\gamma\subset \IB(o,\rho)$.
Let 
$w$ be the midpoint of $\gamma$. Note that the length
$l(\gamma)<2\rho$. One sees that  $y,x\in \mathbb{U}(w,\rho)$, and $\mathbb{U}(w,\rho)\subset\mathbb{U}(o,2\rho)$. By assumption \eqref{jbjsh01},
  there holds that
$$\rho\le \frac{1}{2}\min\{r_{\rm inj}(\IB(o,2\rho)),\frac{\pi}{\sqrt{\Delta_{\IB(o,2\rho)}}}\}\le \frac{1}{2}\min\{r_{\rm inj}(\IB(w,\rho)),
\frac{\pi}{\sqrt{\Delta_{\IB(w,\rho)}}}\}.$$
Thus,  Lemma \ref{PPR} is applicable to concluding that   $\IB(w,\rho)$ is strongly convex and so $\gamma$ is the unique minimal geodesic joining $y$ to $x$ (noting that $x,y\in \IB(w,\rho)$). This shows that  $y$ is a weak pole of $\IB(o,\rho)$ as desired, and assertion (ii) is established.

(iii) 
Fix  $i\in I$, and write $V_i:=\frac{\exp^{-1}_{z}y_i}{\|\exp^{-1}_{z}y_i\|}$. The geodesic $[0,1]\ni t\mapsto \exp_{z}(t\|\exp^{-1}_{z}y_i\|V_i)$
is the minimal geodesic joining $z$ and $y_i$. Applying assertion (ii) just established (to $y_i$ in place of $y$), one checks that
$$
 \exp_{z}(s\|\exp^{-1}_{z}y_i\|V_i)\in \mathbb{U}(o,{\rho})  \quad \mbox{for any } 0<s<1.
$$
Thus, applying \cite[Lemma H.18]{Rici-ta} to $\{V_1,V_2\}$ (with $\mathbb{B}(o,\rho)$ in place of $C$), one can conclude that
  there exits $s_1>0$ such that
$$
\mbox{$\exp_{z}s({\lambda_1V_1+\lambda_2V_2})\in\mathbb{U}(o,{\rho})$ \quad for $0<s\le s_1$,}
$$
  and then, by mathematical  induction, that there exits $\bar s>0$ such that
$$
 \exp_{z}s\sum_{i\in I}\lambda_iV_i\in\mathbb{U}(o,{\rho}) \quad \mbox{for any }0<s\le\bar s,
$$
where each  $\lambda_i:=w_i{\rm d}^{p-1}(z,y_i)$. Taking into account that $-\nabla f_p(z)=\sum_{i\in I}\lambda_iV_i$ by \eqref{derivative}
(noting that $I_z=I$, thanks to assumption \eqref{jbjsh01} and $z\in\partial \IB(o,\rho)$), we conclude that
\eqref{gradinp0} holds. The proof is complete. 
%
%
%
\end{proof}

For the remainder, in view of Lemma \ref{weaklocal}(i), we choose $D_0:=\mathbb{U}(o,\rho)$ for the problem \eqref{OP-CM} unless otherwise specified. Now we are ready to establish the following key proposition.  Recall that $\{y_i\}$ is   colinear if it lies in one geodesic segment. We also need to make use of the following assumption:
%
\begin{equation}\label{ninnonde-asm}
 \min_{x\in {M}} f_p(x) <\min_{  i\in I}f_p(y_i) . 
\end{equation}

\begin{proposition}\label{LM-lemmaAC}
Assume that \eqref{jbjsh01} holds 
and that  $\{y_i\}$ is  not colinear  if  $p=1$.  
Then, $\{y_i\}$ has the unique Riemannian $L^p$ center of mass $\bar x_p$, which lies in $\mathbb{U}(o,\rho)$ and  is the unique critical point of $f_p$ in $\mathbb{B}(o,\rho)$. Furthermore, the following assertions hold:

{\rm (i)}
$\bar x_p$ is a nondegenerate critical point of $f_p$ (and so $\nabla f_p$ is Lipschitz continuous around $\bar x_p$) if  \eqref{ninnonde-asm} is additionally assumed for  $p\in [1,2)$.

{\rm (ii)} $\bar x_p$ is a local weak sharp minimizer of order $2$ for problem \eqref{OP-CM} if  $p\in (1,2)$.

{\rm (iii)} $f_p$ is convex around $\bar x_p$.
\end{proposition}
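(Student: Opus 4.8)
Everything will be deduced from the (strong) convexity of $f_p$ on the strongly convex ball $\mathbb{U}(o,\rho)$ provided by Lemma \ref{weaklocal}(i), so I would record this first. For $x,y_i\in\mathbb{U}(o,\rho)$ one has ${\rm d}(x,y_i)<2\rho$, and for $p\in[1,2)$ the definition \eqref{Delta-p} gives $2\rho\le\frac{\pi}{2\sqrt{\Delta}}$ (with $\Delta$ the upper curvature bound on $\IB(o,2\rho)$), so all mutual distances stay below $\frac{\pi}{2\sqrt{\Delta}}$; by the Hessian comparison theorem the tangential Hessian of each ${\rm d}(\cdot,y_i)$ is then strictly positive, and in the splitting $\nabla^2{\rm d}^p=p\,{\rm d}^{p-1}\nabla^2{\rm d}+p(p-1){\rm d}^{p-2}\nabla{\rm d}\otimes\nabla{\rm d}$ the radial term is nonnegative, so Lemma \ref{QC-F}(iii) yields convexity of each ${\rm d}^p(\cdot,y_i)$, and hence of $f_p$, on $\mathbb{U}(o,\rho)$, strictly so when $p\in(1,2)$. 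For $p\in[2,+\infty)$ term-by-term convexity may fail and I would instead invoke the classical strong convexity of the Riemannian $L^p$ energy on $\mathbb{U}(o,\rho)$ under $\rho\le\varrho_p(\rho)$ (cf. \cite{Afsari2010,Afsari2013}). For $p=1$ each ${\rm d}(\cdot,y_i)$ is convex with radial null line $\mathbb{R}\nabla{\rm d}(\cdot,y_i)$, and non-colinearity of $\{y_i\}$ keeps these lines from all coinciding, giving strict convexity of $f_1$. Since $\bar x_p\in\mathbb{U}(o,\rho)$, assertion (iii) is now immediate.

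For existence and uniqueness I would use compactness: $f_p$ attains its minimum on the compact set $\IB(o,\rho)$ at some $\bar x_p$. By Lemma \ref{weaklocal}(iii), at each boundary point $z\in\partial\IB(o,\rho)$ the gradient $-\nabla f_p(z)$ is nonzero and $\exp_z(-s\nabla f_p(z))$ enters $\mathbb{U}(o,\rho)$ for small $s>0$; since the directional derivative of $f_p$ along $-\nabla f_p(z)$ equals $-\|\nabla f_p(z)\|^2<0$, no boundary point can be a minimizer or a critical point. Thus $\bar x_p\in\mathbb{U}(o,\rho)$ with $\nabla f_p(\bar x_p)=0$, and the strict convexity just recorded forces $\bar x_p$ to be the unique minimizer and the unique critical point of $f_p$ in $\IB(o,\rho)$, i.e. the Riemannian $L^p$ center of mass.

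For assertion (i) I must upgrade $\nabla^2 f_p(\bar x_p)=\sum_i w_i\nabla^2{\rm d}^p(\cdot,y_i)(\bar x_p)$ from positive semidefinite to positive definite. First I secure $C^2$ smoothness at $\bar x_p$: by Remark \ref{remark-LM-ProA} this holds on $D$ for $p\in[2,+\infty)$, while for $p\in[1,2)$ the extra assumption \eqref{ninnonde-asm} forces $\bar x_p\neq y_i$ for all $i$ (if $\bar x_p=y_j$ then $f_p(\bar x_p)=f_p(y_j)\ge\min_i f_p(y_i)$, contradicting \eqref{ninnonde-asm}), placing $\bar x_p$ in $D\setminus\{y_i\}$ where $f_p$ is $C^2$. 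For $p\in(1,2)$ each summand is already positive definite at $\bar x_p\neq y_i$, so their sum is; for $p=1$ each summand is positive semidefinite with radial null line, and non-colinearity makes the common null space trivial; for $p\ge2$ positive definiteness is part of the cited strong convexity (a data-point summand, if present, contributes nonnegatively while the rest are positive definite). Continuity of $\nabla^2 f_p$ then gives Lipschitz continuity of $\nabla f_p$ near $\bar x_p$.

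For assertion (ii), fix $p\in(1,2)$ and shrink $\delta$ so that $\bar x_p$ is the only point of $\bar S$ in $\IB(\bar x_p,\delta)\subset\mathbb{U}(o,\rho)=D_0$; there $f_p+\delta_{D_0}=f_p$ and ${\rm d}_{\bar S}(\cdot)={\rm d}(\cdot,\bar x_p)$, so it suffices to show $f_p(x)-f_p(\bar x_p)\ge\alpha\,{\rm d}^2(x,\bar x_p)$. If \eqref{ninnonde-asm} holds, assertion (i) supplies a positive definite Hessian and the bound follows by Taylor expansion; if it fails then $\bar x_p=y_j$ for some $j$, and writing $f_p=\frac{w_j}{p}{\rm d}^p(\cdot,y_j)+R$ with $R:=\frac1p\sum_{i\neq j}w_i{\rm d}^p(\cdot,y_i)$ one has $\nabla R(\bar x_p)=0$ (the $y_j$-term has vanishing gradient at $y_j$ for $p>1$) and $R$ is strongly convex near $\bar x_p$ (each of its summands is positive definite for $p>1$), so $R(x)-R(\bar x_p)\ge\alpha\,{\rm d}^2(x,\bar x_p)$ while $\frac{w_j}{p}{\rm d}^p(x,y_j)\ge0$. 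The step I expect to be the main obstacle is the strict positive definiteness of $\nabla^2 f_p(\bar x_p)$ behind (i): reconciling the clean term-by-term estimate available for $p\in[1,2)$ (distances staying below $\frac{\pi}{2\sqrt{\Delta}}$) with the genuinely non-term-wise strong convexity needed when $p\ge2$, and, in the borderline case $p=1$, converting the geometric non-colinearity of $\{y_i\}$ into the algebraic triviality of the common radial null space. The remaining assertions are direct consequences of convexity or elementary once the Hessian is controlled.
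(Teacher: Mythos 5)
Your treatment of $p\in[1,2)$ is essentially the paper's own argument and is sound: the term-by-term Hessian comparison (all mutual distances stay below $\frac{\pi}{2\sqrt{\Delta}}$, so each summand's Hessian is positive semidefinite, and positive definite off the data points when $p>1$), non-colinearity killing the common radial null direction when $p=1$, the observation that \eqref{ninnonde-asm} forces $\bar x_p\notin\{y_i\}$ and hence $C^2$ smoothness, and, for assertion (ii) in the degenerate case $\bar x_p=y_j$, the decomposition $f_p=\frac{w_j}{p}{\rm d}^p(\cdot,y_j)+R$ with $\nabla R(\bar x_p)=0$. That decomposition is exactly the paper's reduction to the data set with $y_j$ removed (the paper's $\tilde f_p$ is $\frac{1}{1-w_j}R$), and your direct Taylor bound on $R$ is a legitimate, slightly shorter finish than the paper's re-invocation of assertion (i) for the reduced data.

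The genuine gap is the case $p\in[2,+\infty)$, where your whole architecture rests on ``classical strong convexity of $f_p$ on $\mathbb{U}(o,\rho)$ under $\rho\le\varrho_p(\rho)$.'' No such result holds at this radius. By \eqref{Delta-p}, for $p\ge2$ the condition \eqref{jbjsh01} only guarantees mutual distances $<2\rho\le\frac{\pi}{\sqrt{\Delta}}$, i.e.\ possibly beyond $\frac{\pi}{2\sqrt{\Delta}}$, where the tangential comparison quantity $c_{\Delta}({\rm d})$ is negative: on the unit sphere one may take $\rho=\pi/2$ (allowed by \eqref{jbjsh01}) and all data clustered near the equator, and then $f_2$ fails to be convex on the hemisphere $\mathbb{U}(o,\rho)$ at points near the opposite side of the equator. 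Karcher-type ball-wide convexity requires the smaller radius $\frac{\pi}{2\sqrt{\Delta}}$; Afsari's uniqueness theorem at the larger radius is proved precisely \emph{without} ball-wide convexity. Consequently: (a) your uniqueness argument (compactness, boundary exclusion via Lemma \ref{weaklocal}(iii), then strict convexity) breaks down for $p\ge2$; (b) assertion (iii) for $p\ge2$ is not ``immediate'' from ball-wide convexity --- the paper instead deduces local convexity from nondegeneracy of $\nabla^2f_p(\bar x_p)$ (cited from Afsari, applied to $\mathbb{U}(o,2\rho)$) together with continuity of the Hessian and Lemma \ref{QC-F}(iii); and (c) your parenthetical in (i) that for $p\ge2$ ``the rest are positive definite'' term by term is false for the same reason, and contradicts your own earlier (correct) remark that term-by-term convexity may fail for $p\ge2$. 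Two smaller gaps: the proposition's opening claim concerns the center of mass as the \emph{global} minimizer over $M$ (problem \eqref{OP-LP-def}), while your compactness argument only produces the unique minimizer and critical point inside $\IB(o,\rho)$; ruling out minimizers outside the ball is nontrivial and is exactly what the paper's citation of Afsari's Theorem 2.1 and Remark 2.5 supplies. Finally, for $p=1$ your sentence ``$\bar x_p\in\mathbb{U}(o,\rho)$ with $\nabla f_p(\bar x_p)=0$'' can fail, since without \eqref{ninnonde-asm} the minimizer may be a data point where $f_1$ is not differentiable.
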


\begin{proof}
Note by \eqref{gradinp0} that $\nabla f_p$ does not vanish on $\partial \mathbb{B}(o,\rho)$ thanks to assumption \eqref{jbjsh01}.
Thus, by assumption, it follows from \cite[Theorem 2.1 and Remark 2.5]{Afsari2010} that $\{y_i\}$ has the unique Riemannian $L^p$ center of mass $\bar x_p\in\mathbb{U}(o,\rho)$, which is the unique critical point of $f_p$ in $\mathbb{U}(o,\rho)$.

(i)  Note that the conclusion  for $p\in[2,+\infty)$ follows from \cite[Theorem 2.1]{Afsari2010} (applied to
  $\mathbb{U}(o,2\rho)$ in place of $M$). Thus, we assume that
 $p\in[1,2)$ and   that \eqref{ninnonde-asm} holds.  
Then,    $\bar x_p\in\IB(o,\rho)\setminus\{y_i\}$.
 We shall complete the proof by showing that $\nabla^2 f_p(x)$ is positive definite for each $x\in\IB(o,\rho)\setminus\{y_i\}$.
To do this, let $x\in\IB(o,\rho)\setminus\{y_i\}$, and let $  \gamma(\cdot)$ be a unit
speed geodesic with $\gamma(0)=x$. Then, $\gamma([-\epsilon,\epsilon])\subseteq \IB(o,\rho)\setminus\{y_i\}$ for some $\epsilon>0$. It suffices to verify that
\begin{equation}\label{wsm-pd1}
\frac{{\rm d}^2}{{\rm d}t^2}(f_p\circ\gamma)(0)=\frac{{\rm d}^2}{{\rm d}t^2}f_p(\gamma(t))|_{t=0}>0
\end{equation}
To show this, let $i\in I$, and let $\alpha_i$   denote the angle at $x$ between the geodesic $\gamma$ and the unique minimal
geodesic joining $x$ to $y_i$. Note that
\begin{equation}\label{Hessian-P-x}
{\rm d}(x,y_i)<2\rho\le2\varrho_p(\rho)= \min\{r_{\rm inj}(\IB(o,2\rho)),
\frac{\pi}{2\sqrt{\Delta_{\IB(o,2\rho)}}}\}\quad \mbox{for each $i\in I$},
\end{equation}
thanks to  $\rho\le\varrho_p(\rho)$ (by \eqref{jbjsh01}) and   \eqref{Delta-p}. Then, the function ${\rm d}(\gamma(\cdot),y_i)$
is analytic on $(-\epsilon,\epsilon)$, and by    the arguments for proving \cite[(2.3)]{Absil2008} and \cite[p. 153-154]{Sakai1996},
one has that \begin{equation}\label{Hessian-P}  \frac{{\rm d}}{{\rm d}t}{\rm d}(\gamma(t),y_i)|_{t=0}=\cos\alpha_i\quad\mbox{and}\quad
\frac{{\rm d}^2}{{\rm d}t^2}{\rm d}(\gamma(t),y_i)|_{t=0}\ge c_{\Delta_{\IB(o,2\rho)}}({\rm d}(x,y_i))\sin^2 \alpha_i,
\end{equation}
where, for any $l>0$,  $c_{\delta}(l):=\frac{1}{\sqrt{\delta}}\cot{(\sqrt{\delta}l)}$ if $\delta>0$,  $c_{\delta}(l):=\frac{1}{l}$ if $\delta=0$, and  $c_{\delta}(l):=\frac{1}{\sqrt{|\delta|}}\coth{(\sqrt{|\delta|}l)}$ otherwise.
Since,  for any $t\in(-\epsilon, \epsilon)$,  
$$
\frac{{\rm d}^2}{{\rm d}t^2}f_p(\gamma(t)) =\sum_{i\in I}w_i\left((p-1){\rm
d}^{p-2}(x,y_i)\left(\frac{{\rm d}}{{\rm d}t}{\rm d}(\gamma(t),y_i)\right)^2+{\rm
d}^{p-1}(x,y_i)\frac{{\rm d}^2}{{\rm d}t^2}{\rm d}(\gamma(t),y_i)\right),
$$
it follows from \eqref{Hessian-P} 
that
\begin{equation}\label{wsm-pd}
\frac{{\rm d}^2}{{\rm d}t^2}(f_p\circ\gamma)(0)\ge\sum_{i\in I}w_i\left((p-1){\rm
d}^{p-2}(x,y_i)\cos^2\alpha_i+{\rm
d}^{p-1}(x,y_i)c_{\Delta_{\IB(o,2\rho)}}({\rm d}(x,y_i))\sin^2 \alpha_i\right).
\end{equation}
Note by \eqref{Hessian-P-x} that $0<{\rm d}(x,y_i)<\frac{\pi}{2\sqrt{\Delta_{\IB(o,2\rho)}}}$,
 and then $c_{\Delta_{\IB(o,2\rho)}}({\rm d}(x,y_i))>0$   by definition.
Thus,  \eqref{wsm-pd1} is clear in the case when $p\in(1,2)$; while, for the case when  
$p=1$, there exists  an index $i_0\in I$ such that $\sin \alpha_{i_0}\neq0$ (as $\{y_i\}$ is not colinear by assumption), and \eqref{wsm-pd1} follows  from \eqref{wsm-pd} as
$$
\frac{{\rm d}^2}{{\rm d}t^2}(f_p\circ\gamma)(0)\ge\sum_{i\in I}w_ic_{\Delta_{\IB(o,2\rho)}}({\rm d}(x,y_i))\sin^2 \alpha_{i}\ge w_{i_0}c_{\Delta_{\IB(o,2\rho)}}({\rm d}(x,y_{i_0}))\sin^2 \alpha_{i_0}>0.
$$
Therefore, \eqref{wsm-pd1} is   valid for any $p\in[1,2)$, completing the proof of assertion (i).

(ii) Assume   $p\in (1,2)$. 
 In light of assertion (i), 
we only need to consider the case when  \eqref{ninnonde-asm} is  not satisfied.
Thus, we may assume that  $\bar x_p=y_{i_0}$ for   some $i_0\in I$ and so  $\nabla f_p(y_{i_0})=0$.
Consider the date set $\{y_i:i\in \tilde I\}$ and the weights $\{\tilde w_i:i\in \tilde I\}$, where  $\tilde I:=I\setminus\{i_0\}$ and
 $\tilde w_i:=\frac{w_i}{1-w_{i_0}}$ for each $i\in \tilde I$.
Then, \eqref{jbjsh01} remains true for the date set $\{y_i:i\in \tilde I\}$. Let $\tilde f_p$ denote
  the corresponding function  defined  by \eqref{LP-def} (with $\{y_i:i\in \tilde I\}$,
   $\{\tilde w_i:i\in \tilde I\}$ in place of $\{y_i:i\in  I\}$, $\{  w_i:i\in  I\}$). Then,
  \begin{equation}\label{wsm-ei}
\tilde f_p(\cdot):=\frac{1}{p}\sum_{i\in \tilde I}\tilde w_i{\rm d}^p(\cdot,y_i)=\frac{1}{1-w_{i_0}}\left(f_p(\cdot) -\frac{w_{i_0}}{p}{\rm d}^p(\cdot,y_{i_0})\right).
  \end{equation}
Hence, $ \nabla \tilde f_p({\bar x_p})=\frac{\nabla f_p({\bar x_p})}{1-w_{i_0}}=0$.
This means that  $\bar x_p$ is also the unique Riemannian $L^p$ center of mass of $\{y_i:i\in \tilde I\}$, and so
 \eqref{ninnonde-asm} holds with  $\tilde f_p$, $ \tilde I$ in place of $f_p$, $I $.  %
%
Thus, by assertion (i), 
one sees that $\bar x_p$ is a nondegenerate critical point of $\tilde f_p$, which in particular implies that   $\bar x_p$ is a local weak  sharp minimizer of order $2$ for problem \eqref{OP-CM} with $\tilde f_p$ in place of $f_p$:   there exist $\delta,\alpha>0$ such that 
$$
\alpha{\rm d}^2(x,\bar x_p)\le \tilde f_p(x)-\tilde f_p(\bar x_p)\quad\mbox{for any }x\in\IB(\bar x,\delta).
$$
Since $\tilde f_p(\bar x_p)= \frac{1}{1-w_{i_0}}f_p(\bar x_p)$ and
$\tilde f_p(\cdot)\le \frac{1}{1-w_{i_0}}f_p(\cdot)$ on $\IB(\bar x,\delta)$ by \eqref{wsm-ei}, it follows that
$$
\alpha({1-w_{i_0}}){\rm d}^2(x,\bar x_p)\le ({1-w_{i_0}})(\tilde f_p(x)-\tilde f_p(\bar x_p))\le  f_p(x)-f_p(\bar x_p)\quad\mbox{for any }x\in\IB(\bar x_p,\delta).
$$%
Therfore $\bar x_p$ is a local weak  sharp minimizer of order $2$ for problem \eqref{OP-CM}, establishing assertion (ii).

(iii) It follows from assertion (i) for $p\in[2,+\infty)$ and from \cite[Theorem 2.1]{Afsari2010}) for $p\in [1,2)$
($f_p$ is  actually convex on $\mathbb{U}(o,\rho)$ in the case when $p\in [1,2)$). The proof is complete.
\end{proof}


Recall from Remark \ref{remark-LM-ProA} that $f_p$ is $C^1$ on $D$ for $p\in(1,+\infty)$ and $C^1$ on $D\setminus\{y_i\}$ for $p=1$. Then, we have
\begin{equation}\label{C1fp}
\mathcal{D}^1(f_p +\delta_{D_0})=\left\{\begin{array}{ll}D_0\setminus\{y_i\}&\quad\mbox{if $p=1$},\\
D_0&\quad\mbox{if $p\in(1,+\infty)$}.\end{array}\right.
\end{equation}
Furthermore, it is clear that
\begin{equation}\label{C1fpc}
\mbox{$\nabla (f_p +\delta_{D_0})$ is continuous on $\mathcal{D}^1(f_p +\delta_{D_0})$.}
\end{equation}


\begin{theorem}\label{THA-full-L-mass}
Assume that \eqref{jbjsh01} holds 
and that  $\{y_i\}$ is not colinear for  $p=1$.
Let $\{x_k\}$ be a sequence generated by Algorithm {\rm\ref{GDA}} for solving problem \eqref{OP-CM} with initial point $x_0\in \mathbb{U}(o,{\rho})$, and suppose that the step size sequence $\{t_k\}$ has a positive lower bound: $\inf\{t_k\}>0$ and $\{x_k\}$ has a cluster point $\bar x_p\in \mathcal{D}^1(f_p +\delta_{D_0})$. 
Then, $\{x_k\}$  converges to $\bar x_p$, which is the unique Riemannian $L^p$ center of mass of $\{y_i\}$; moreover  the convergence rate is at least linear if \eqref{ninnonde-asm} is additionally  assumed for $p=1$.
\end{theorem}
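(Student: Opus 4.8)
The plan is to regard \eqref{OP-CM} as an instance of the abstract problem \eqref{P-1.1} with objective $f:=f_p+\delta_{D_0}$ and $D_0=\mathbb{U}(o,\rho)$, and to extract both conclusions from Theorem \ref{full-1} applied at the cluster point. First I would verify that the cluster point $\bar x_p$ satisfies assumption \eqref{assumption-a}. By \eqref{C1fp} the set $\mathcal{D}^1(f)$ equals $D_0\setminus\{y_i\}$ (when $p=1$) or $D_0$ (when $p>1$); in either case it is open, so the hypothesis $\bar x_p\in\mathcal{D}^1(f)$ gives $\bar x_p\in{\rm int}\,\mathcal{D}^1(f)$, and $\nabla f$ is continuous there by \eqref{C1fpc}. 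Since $\inf_k t_k>0$, Remark \ref{remark-partial02}(b) forces $\bar x_p\in C_f$; as $\bar x_p$ lies in the interior $D_0$, one has $\nabla f(\bar x_p)=\nabla f_p(\bar x_p)=0$, so $\bar x_p$ is a critical point of $f_p$ in $\mathbb{B}(o,\rho)$. By Proposition \ref{LM-lemmaAC} such a critical point is unique and coincides with the Riemannian $L^p$ center of mass, which identifies the cluster point with the center of mass and completes the check of \eqref{assumption-a}.

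For the convergence claim I would invoke Theorem \ref{full-1}(i), whose only remaining hypothesis is that $f$ be quasi-convex around $\bar x_p$. Since $\bar x_p\in D_0$ and $D_0$ is open, $f$ agrees with $f_p$ on a small ball about $\bar x_p$, and Proposition \ref{LM-lemmaAC}(iii) asserts that $f_p$ is convex (hence quasi-convex) there. Theorem \ref{full-1}(i) then delivers $\{x_k\}\to\bar x_p$, the asserted center of mass.

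For the linear rate I would apply Theorem \ref{full-1}(ii), which requires $\inf_k t_k>0$ (given), assumption \eqref{assumptipn-b01}, and either $\beta\in(\tfrac12,1)$ or $\bar x_p$ isolated. Convexity around $\bar x_p$ is already in hand, so the crux is the order-$2$ weak sharp minimizer property together with isolatedness, which I would obtain by the case analysis in $p$ dictated by Proposition \ref{LM-lemmaAC}. For $p\in(1,2)$, assertion (ii) gives the order-$2$ weak sharp minimizer directly, and its proof yields the estimate $f_p(x)-f_p(\bar x_p)\ge\alpha(1-w_{i_0})\,{\rm d}^2(x,\bar x_p)$, so $\bar x_p$ is a strict, hence isolated, local minimizer. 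For $p\in[2,+\infty)$, and for $p=1$ under the additional hypothesis \eqref{ninnonde-asm}, assertion (i) makes $\bar x_p$ a nondegenerate critical point; a positive definite Hessian furnishes both a strict (isolated) local minimizer and the order-$2$ weak sharp minimizer property (this is precisely the ``nondegenerate $\Rightarrow$ quasi-nondegenerate'' observation recorded before Corollary \ref{C-LL-Q}). In every regime $\bar x_p$ is isolated, so the isolated alternative in Theorem \ref{full-1}(ii) (via Remark \ref{remark-L-LSM}) applies for any admissible $\beta\in(0,1)$, giving linear convergence.

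The main obstacle is the bookkeeping across the three regimes $p=1$, $p\in(1,2)$, $p\in[2,+\infty)$: one must draw the correct conclusion (weak sharp minimizer of order $2$ versus nondegeneracy) from the matching part of Proposition \ref{LM-lemmaAC}, and in particular confirm isolatedness so as not to impose $\beta>\tfrac12$ that the statement does not assume—the estimate featuring ${\rm d}^2(x,\bar x_p)$ rather than ${\rm d}_{\bar S}^2(x)$ in the $p\in(1,2)$ branch is what secures this. A secondary point to keep clean is that all arguments are carried out at an interior point of $D_0$, where $f_p+\delta_{D_0}$ coincides with $f_p$, so the critical point, convexity, and minimizer notions for \eqref{OP-CM} reduce to those for $f_p$.
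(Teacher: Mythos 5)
Your proposal is correct and follows essentially the same route as the paper: verify \eqref{assumption-a} via \eqref{C1fpc} and Remark \ref{remark-partial02}(b), identify the cluster point with the unique center of mass via Proposition \ref{LM-lemmaAC}, and then invoke Theorem \ref{full-1} (the paper cites Corollary \ref{full-AG}, which rests on that same theorem). In fact your handling of the alternative in Theorem \ref{full-1}(ii)---checking across the regimes $p=1$, $p\in(1,2)$, $p\in[2,+\infty)$ that $\bar x_p$ is a strict, hence isolated, local minimizer, so that no restriction $\beta\in(\tfrac12,1)$ is needed---is more explicit than the paper's own proof, which leaves that point implicit and cites only Proposition \ref{LM-lemmaAC}(iii) for assumption \eqref{assumptipn-b01} even though the order-$2$ weak sharp minimizer part comes from assertions (i)/(ii).
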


\begin{proof}
By \eqref{C1fpc} and the assumption $\inf\{t_k\}>0$,
Remark \ref{remark-partial02}(b) is applicable and we see $\nabla f_p(\bar x_p)=0$, and so \eqref{assumption-a} holds (with $\bar x_p$ in place of $\bar x$).
Then, $\bar x_p\in \mathbb{U}(o,{\rho})$ and is the unique Riemannian $L^p$ center of mass of $\{y_i\}$. Therefore, \eqref{assumptipn-b01} is satisfied from
Proposition \ref{LM-lemmaAC}(iii) if \eqref{ninnonde-asm} is additionally  assumed for $p=1$. 
Thus, Corollary \ref{full-AG} is applicable (noting $\inf\{t_k\}>0$) to completing the proof.
\end{proof}

\begin{corollary}\label{THA-full-L-massc}
Assume that \eqref{jbjsh01} holds 
and that  $\{y_i\}$ is not colinear for  $p=1$.
Let $x_0\in \mathbb{U}(o,{\rho})$ and suppose for $p=1$ that
\begin{equation}\label{p1n}
f_p(x_0) <\min_{i\in I}f_p(y_i).
\end{equation}
Then,   Algorithm {\rm\ref{GDA}} for solving problem \eqref{OP-CM} employing the Armijo step sizes with initial point $x_0$ is well defined, and  
 the generated sequence   $\{x_k\}$ 
converges to the unique Riemannian $L^p$ center of mass of $\{y_i\}$. Moreover,  the
convergence rate is at least linear if \eqref{ninnonde-asm} is additionally  assumed for $p\in[1,2)$.
\end{corollary}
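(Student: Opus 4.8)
The plan is to read this corollary as the Armijo-step-size instance of Theorem \ref{THA-full-L-mass}, so that the only genuinely new work is (a) establishing that the Armijo iteration is well defined and (b) producing a cluster point that lies in $\mathcal{D}^1(f_p+\delta_{D_0})$; convergence and the linear rate will then be delivered by the results of Section~3 together with Proposition \ref{LM-lemmaAC}.

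First I would note that admissibility is automatic. Since $f_p+\delta_{D_0}$ equals $+\infty$ off $D_0=\mathbb{U}(o,\rho)$, the Armijo test \eqref{GDA-2-A} can only accept a step with $\gamma_k(t_k)\in D_0$, so $\{x_k\}\subset\mathbb{U}(o,\rho)$ with no extra effort. For $p\in(1,+\infty)$ this gives $x_k\in D_0=\mathcal{D}^1(f_p+\delta_{D_0})$ by \eqref{C1fp}. For $p=1$ I would combine the monotonicity of $\{f_1(x_k)\}$ with the hypothesis \eqref{p1n}: from $f_1(x_k)\le f_1(x_0)<\min_{i\in I}f_1(y_i)$ no iterate can be a data point, so $x_k\in D_0\setminus\{y_i\}=\mathcal{D}^1(f_1+\delta_{D_0})$. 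Well-definedness of the Armijo rule is then immediate from Remark \ref{remark-partial02}(a) (using also \eqref{C1fpc}).

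Next I would produce and locate a cluster point, the step I expect to be the main obstacle. As $\{x_k\}\subset\mathbb{U}(o,\rho)\subset\mathbb{B}(o,\rho)$ and the closed ball is compact---equivalently the sublevel set $L^0_f$ with $f=f_p+\delta_{D_0}$ is bounded, so Lemma \ref{lemma-BX}(a) applies---the sequence admits a cluster point $\bar x\in\mathbb{B}(o,\rho)$. The crux is to show $\bar x\in\mathbb{U}(o,\rho)$, i.e.\ that $\bar x$ does not lie on the boundary sphere. Here I would invoke Lemma \ref{weaklocal}(iii): on $\partial\mathbb{B}(o,\rho)$ the field $\nabla f_p$ is nonzero and $-\nabla f_p$ points strictly into $\mathbb{U}(o,\rho)$. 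If $\bar x$ lay on $\partial\mathbb{B}(o,\rho)$, then $\nabla f_p$ would be continuous, nonvanishing and inward-pointing throughout a neighbourhood of $\bar x$ in $D$ (note $\mathbb{B}(o,\rho)\subset D$ and, for $p=1$, the sphere avoids the $y_i$, so $f_p$ is smooth there); the inward-pointing property guarantees that short Armijo trial steps remain in $D_0$, and the estimate underlying Remark \ref{remark-partial02}(b) then forces the accepted step sizes along the subsequence tending to $\bar x$ to be bounded below. This would make $\sum_k t_k\|\nabla f_p(x_k)\|^2$ diverge, contradicting \eqref{Full-p1}. Hence $\bar x\in\mathbb{U}(o,\rho)\cap\mathcal{D}^1$ (for $p=1$, $\bar x\ne y_i$ again by \eqref{p1n} and continuity), so Remark \ref{remark-partial02}(b) makes $\bar x$ a critical point and the uniqueness in Proposition \ref{LM-lemmaAC} identifies $\bar x=\bar x_p$. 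Assumption \eqref{assumption-a} now holds, and since $f_p$ is convex (hence quasi-convex) around $\bar x_p$ by Proposition \ref{LM-lemmaAC}(iii), Theorem \ref{full-1}(i) (equivalently Corollary \ref{full-AG}) yields $x_k\to\bar x_p$, the Riemannian $L^p$ center of mass.

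Finally I would obtain the linear rate, breaking the apparent circularity that Lemma \ref{Bt-Lip} needs convergence, which the previous paragraph supplies without any lower bound on $\{t_k\}$. Under \eqref{ninnonde-asm} for $p\in[1,2)$, and unconditionally for $p\in[2,+\infty)$, Proposition \ref{LM-lemmaAC}(i) makes $\bar x_p$ a nondegenerate critical point, so $\nabla f_p$ is Lipschitz continuous around $\bar x_p$ and Lemma \ref{Bt-Lip} yields $\inf_k t_k>0$. The remaining hypotheses of Theorem \ref{full-1}(ii) are then in place: $f_p$ is convex around $\bar x_p$ by Proposition \ref{LM-lemmaAC}(iii), $\bar x_p$ is a local weak sharp minimizer of order $2$ by Proposition \ref{LM-lemmaAC}(i) (or by (ii) for $p\in(1,2)$), and the uniqueness of $\bar x_p$ as a critical point makes it an isolated minimizer, which discharges the clause ``$\beta\in(\tfrac12,1)$ or $\bar x$ isolated.'' Theorem \ref{full-1}(ii) then gives the linear convergence, finishing the proof.
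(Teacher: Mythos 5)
Your proposal is correct and follows essentially the same route as the paper's own proof: the same well-definedness argument via Remark \ref{remark-partial02}(a), monotonicity and \eqref{p1n}; the same compactness-produced cluster point; the same boundary exclusion built on Lemma \ref{weaklocal}(iii) plus Armijo maximality (your contradiction with \eqref{Full-p1} via a step-size lower bound is just the contrapositive of the paper's argument, which sends $t_{k_j}\to 0$, notes the doubled trial step $2t_{k_j}$ fails the Armijo test inside $\mathbb{U}(o,\rho)$, and derives $\beta\ge 1$ by the mean value theorem); and the same endgame through Proposition \ref{LM-lemmaAC}, Lemma \ref{Bt-Lip} and Theorem \ref{full-1}, where your reordering --- establishing convergence first, so that Lemma \ref{Bt-Lip} is applied to an already convergent sequence --- tidily resolves the circularity the paper's wording glosses over. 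The one step you assert rather than prove, namely that inward-pointing holds \emph{uniformly} (a single $\bar s>0$ with $\exp_x[-s\nabla f_p(x)]\in\mathbb{U}(o,\rho)$ for all $x$ near $\bar x$ and all $s\in(0,\bar s]$, not merely at the boundary point itself), is precisely where the paper invests its effort (estimate \eqref{In-pp-x}, proved via continuity of $x\mapsto\exp_x[-\bar s\nabla f_p(x)]$, a convexity-radius bound, and strong convexity of $\mathbb{U}(o,\rho)$), so it does not come for free from Lemma \ref{weaklocal}(iii) alone, which only supplies a point-dependent $\bar s$ at boundary points.
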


\begin{proof} By \eqref{C1fp}, one sees that $\mathcal{D}^1(f_p +\delta_{D_0})=\mathcal{D}(f_p +\delta_{D_0})=D_0$ in the case when $p\in (1,+\infty)$; thus
the first conclusion regarding the well definedness of Algorithm {\rm\ref{GDA}} follows
directly from Remark \ref{remark-partial02}(a). 
Below we consider  the case 
when $p=1$. To do this,   
in view of \eqref{p1n}, one applies  Remark \ref{remark-partial02}(a) inductively  to check  that
each generated point $\{x_k\}$ satisfying $\{x_k\}\subseteq L^0_f\subset D_0\setminus\{y_i\}=\mathcal{D}^1(f_p +\delta_{D_0})$ (as $\{f_1(x_k)\}$ is decreasing), and so  Algorithm {\rm\ref{GDA}} employing the Armijo step sizes is well defined, completing
the proof for the first conclusion.

To show the second conclusion regarding the convergence rate, we note first that $L^0_f$ is bounded, that is,   assumption (a) in Lemma \ref{lemma-BX} holds. Thus  Corollary \ref{full-AG} is applicable to getting that $\{x_k\}$ has a cluster point, say $\bar x_p\in \IB(o,\rho)$ (noting that $D_0:=\mathbb{U}(o,\rho)$). As noted before, $\{x_k\}\subseteq L^0_f$; hence    $\bar x_p\notin\{y_i\}$ when $p=1$. Then, one sees from \eqref{C1fp} that
$\bar x_p\in\mathcal{D}^1(f_p +\delta_{D_0})\cup\partial D_0$.
Below we show that
$\bar x_p\in \mathcal{D}^1(f_p +\delta_{D_0})$.
Granting this 
and noting that \eqref{ninnonde-asm} additionally holds for $p\in [1,2)$, we get from Proposition \ref{LM-lemmaAC}(i) that $\nabla f_p$ is Lipschitz continuous around $\bar x_p$. Therefore  the corresponding  step size sequence $\{t_k\}$ employing the Armijo step sizes has a positive lower bound thanks to Lemma \ref{Bt-Lip}, and then Theorem \ref{THA-full-L-mass} is applicable to 
completing the proof.

To proceed, suppose on the contrary that $\bar x_p\in\partial D_0$. By assumption \eqref{jbjsh01}, it follows  from Lemma \ref{weaklocal}(iii) that
\begin{equation}\label{LM-lemmaB1}
\nabla f_p(\bar x_p)\neq0
\end{equation}
and there exists $\bar s>0$ such that
\begin{equation}\label{In-pp}
 \exp_{\bar x_p}[-s{\nabla f_p(\bar x_p)}] \in \mathbb{U}(o,{\rho})  \quad \mbox{for any }0< s\le \bar s.
\end{equation}
Now, we show that there exists $\delta_0>0$ such that
\begin{equation}\label{In-pp-x}
\exp_{x}[-s{\nabla f_p(x)}]\in\mathbb{U}(o,{\rho})  \quad \mbox{for any }x\in\IB(\bar x_p,\delta_0) \mbox{ and }0< s\le  \bar s
\end{equation}
(using a smaller $\bar s$ if necessary). To this end,   set $\bar z:=\exp_{\bar x_p}[-\bar s\nabla f_p(\bar x_p)]$ and then $\bar z\in \mathbb{U}(o,{\rho})$  by \eqref{In-pp}; hence  there is $\bar\varepsilon >0$ such that $\IB(\bar z,\bar\varepsilon)\subset \mathbb{U}(o,{\rho})$. Without loss generality, we may assume
\begin{equation}\label{In-pp-Acvx}
\bar s\|\nabla f_p(\bar x_p)\|+\bar\varepsilon+\bar \delta\le r_{\rm cvx}(\IB(\bar x_p,\bar\delta))
\end{equation}
for some $\bar\delta>0$. Since  the mapping $x\mapsto\exp_x[-\bar s\nabla f_p(x)]$ is continuous on $\mathbb{U}(o,{\rho})$ (as $\nabla f_p(x)$ is continuous on $\mathbb{U}(o,{\rho})$), there exists $\delta_0\in (0,\bar \delta)$ such that
$$
\exp_{x}[-\bar s\nabla f_p(x)]\in \IB(\bar z,\bar \varepsilon)\subset \mathbb{U}(o,{\rho})  \quad\mbox{for any }x\in \IB(\bar x_p,\delta_0).
$$
Let $x\in \IB(\bar x_p,\delta_0)$ and write  $z_x:=\exp_{x}[-\bar s\nabla f_p(x)]$. Then, in view of \eqref{In-pp-Acvx}, we check that $${\rm d}(x,z_x)\le {\rm d}(x,\bar x_p)+{\rm d}(\bar x_p,\bar z)+{\rm d}(\bar z,z_x)\le r_{\rm cvx}(\IB(\bar x_p,\delta_1))\le r_{\rm cvx}(x).$$
Thus, the geodesic $[0,\bar s]\ni s\mapsto \exp_{x}[-s\nabla f_p(x)]$ is the minimal geodesic joining $x$ to $z_x$, and so \eqref{In-pp-x} holds as $\mathbb{U}(o,{\rho})$ is strongly convex.

To proceed, let $\{x_{k_j}\}$ be a subsequence of $\{x_k\}$ converging  to $\bar x_p$. Then, $\lim_{j\to+\infty}t_{k_j}=0$
%
by Remark \ref{remark-partial02}(i). Thus, without loss of generality, we may assume that
\begin{equation}\label{LM-lemmaB2}
x_{k_j}\in \IB(\bar x_p,\delta_0)\quad
\mbox{and}\quad
2t_{k_j}\le \bar s\quad\mbox{for each }j.
\end{equation}
Fix  $j$ and recall that the geodesic $\gamma_{k_j}$ is defined  by  \eqref{GDA-1}. Then,  in view of \eqref{In-pp-x} and \eqref{LM-lemmaB2}, we see that
$$
\gamma_{k_j}(s)=\exp_{x_{k_j}}[-s{\nabla f_p(x_{k_j})}]\in \mathbb{U}(o,{\rho}) \quad\mbox{for each }s\in  [0,2t_{k_j}].
$$
By using the mean value theorem, there is $\bar t_{k_j}\in(0,2t_{k_j})$ such that
$$
\frac{f_p(\gamma_{k_j}(2t_{k_j}))-f_p(x_{k_j})}{-2t_{k_j}}=\langle P_{\gamma_k,x_{k_j},\gamma_{k_j}(\bar t_{k_j})}\nabla f_p(\gamma_{k_j}(\bar t_{k_j})),\nabla f_p(x_{k_j})\rangle.
$$
This, together with   (\eqref{GDA-2-A}), implies that
$$\langle P_{\gamma_{k_j},x_k,\gamma_{k_j}(\bar t_{k_j})}\nabla f_p(\gamma(\bar t_{k_j})),\nabla f_p(x_{k_j})\rangle\le \beta \|\nabla f_p(x_{k_j})\|^2.$$
Passing to the limit as  $j\to \infty$, we arrive  at  $\beta\ge1$ by  \eqref{LM-lemmaB1}, which is a contradiction. Thus, the proof is complete.
\end{proof}

Below, we shall
consider the gradient algorithm for solving problem \eqref{OP-LP-def} employing constant step sizes, which is stated as follows.

\begin{algorithm}\label{GA} Give $x_0\in\mathcal{D}(f)$, $t_0\in(0,+\infty)$ and set $k:=0$.

\noindent {\rm{Step 1}}. If $\nabla f(x_k)=0$ or $x_k\notin\mathcal{D}^1(f)$, 
then stop;
otherwise construct $\gamma_{k}$ as \eqref{GDA-1}.

\noindent {\rm{Step 2}}. Set
$x_{k+1}:=\gamma_{k}(t_{0})$,
replace $k$ by $k+1$ and go to step 1.
\end{algorithm}

Let $x_0\in D_0:=\mathbb{U}(o,\rho)$, and we need
the following assumption:
\begin{equation}\label{gcwdinu0}
L^0_{f_p}\subset D_0,
\end{equation}
where, as done in Section 3, $L^0_{f_p}:=L_{f_p}(f_p(x_0))$ is the sub-level set.  Moreover, we need also the following assumption made for
$p\in [1,2)$:
\begin{equation}\label{indif}
f_p(x_0)<\min_{i\in I}f_p(y_i).
\end{equation}
%
Thus, under assumption \eqref{gcwdinu0}, and assumption \eqref{indif} (only for $p\in [1,2)$),
 $f_p$ is $C^2$ on $L_{f_p}^0$ by Remark \ref{remark-LM-ProA}, and the supremum of all eigenvalues of $\nabla^2 f_p(\cdot)$ on $L_{f_p}^0$, denoted by   
  $\lambda_p(x_0)$,   is bounded (as $L^0_{f_p}$ is compact).


%

\begin{corollary}\label{Coro-full-L-mass-1}
Assume that \eqref{jbjsh01} holds and that  $\{y_i\}$ is   not colinear if  $p=1$. Let  $x_0\in D_0$ be  such that
\eqref{gcwdinu0} holds,  and that  \eqref{indif} holds for $p\in[1,2)$. 
Then, Algorithm {\rm\ref{GA}} for solving problem \eqref{OP-LP-def} with 
$t_0\in \left(0,\frac{2}{\lambda_p(x_0)}\right)$ is well defined and converges linearly to the unique $L^p$ center of mass of $\{y_i\}$. 
\end{corollary}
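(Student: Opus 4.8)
The plan is to realize Algorithm \ref{GA} (with the constant step $t_0$) as a particular instance of Algorithm \ref{GDA}, and then invoke the general linear convergence theory of Section 3. The first task is to record the second-order structure of $f_p$ on the relevant region. Under \eqref{jbjsh01} and the non-colinearity hypothesis, Proposition \ref{LM-lemmaAC} supplies the unique $L^p$ center of mass $\bar x_p$, which is the unique critical point of $f_p$ in $\IB(o,\rho)$; moreover \eqref{indif} forces $\min_{x\in M}f_p(x)\le f_p(x_0)<\min_{i\in I}f_p(y_i)$, so \eqref{ninnonde-asm} holds and, for $p\in[1,2)$, each $y_i$ lies outside $L^0_{f_p}$. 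Hence by Remark \ref{remark-LM-ProA} and \eqref{gcwdinu0}, $f_p$ is $C^2$ on the compact set $L^0_{f_p}\subset D_0\setminus\{y_i\}$, and $\lambda_p(x_0)<+\infty$ is a genuine (positive, as $\nabla^2 f_p(\bar x_p)$ is positive definite by Proposition \ref{LM-lemmaAC}(i)) upper bound for the eigenvalues of $\nabla^2 f_p$ on $L^0_{f_p}$.

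The crux is a descent-plus-invariance argument for the constant step. Writing $\phi_k(t):=f_p(\gamma_k(t))$, one has $\phi_k'(0)=\langle\nabla f_p(x_k),\gamma_k'(0)\rangle=-\|\nabla f_p(x_k)\|^2$, while, along the geodesic $\gamma_k$ (of constant speed $\|\nabla f_p(x_k)\|$), $\phi_k''(s)=\langle\nabla^2 f_p(\gamma_k(s))\gamma_k'(s),\gamma_k'(s)\rangle\le\lambda_p(x_0)\|\nabla f_p(x_k)\|^2$ whenever $\gamma_k(s)\in L^0_{f_p}$. I would then argue inductively that each iterate stays in $L^0_{f_p}\cap\mathcal{D}^1(f_p)$: assuming $x_k\in L^0_{f_p}$ with $\nabla f_p(x_k)\neq 0$, let $T\in[0,t_0]$ be maximal with $\gamma_k([0,T])\subset L^0_{f_p}$; Taylor's theorem and the two bounds give, for $t\in[0,T]$,
\[
f_p(\gamma_k(t))\le f_p(x_k)-t\Bigl(1-\tfrac{\lambda_p(x_0)\,t}{2}\Bigr)\|\nabla f_p(x_k)\|^2,
\]
which, since $t_0<2/\lambda_p(x_0)$, is strictly below $f_p(x_0)$ for $t\in(0,T]$. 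Thus $\gamma_k(T)$ lies in the open set $\{f_p<f_p(x_0)\}\subset L^0_{f_p}$, so $T<t_0$ would contradict maximality; hence $T=t_0$, $x_{k+1}=\gamma_k(t_0)\in L^0_{f_p}$, and the displayed inequality with $t=t_0$ is precisely the sufficient-decrease condition \eqref{GDA-2} with $\beta:=1-\tfrac{\lambda_p(x_0)t_0}{2}\in(0,1)$. For $p\in[1,2)$ the inclusion $L^0_{f_p}\subset D_0\setminus\{y_i\}$ keeps the iterates in $\mathcal{D}^1(f_p)$, so the algorithm never stops prematurely; if $\nabla f_p(x_k)=0$ does occur, the stopping point is the unique critical point $\bar x_p$.

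Consequently, with $t_k\equiv t_0$ (so $\inf_k t_k=t_0>0$) and $R:=\max\{1,t_0\}$, the sequence produced by Algorithm \ref{GA} is a sequence generated by Algorithm \ref{GDA} with the parameter $\beta$ above. Since $L^0_{f_p}\subset D_0=\mathbb{U}(o,\rho)$ is bounded, Lemma \ref{lemma-BX}(a) yields a cluster point, and, because $\inf_k t_k>0$ and $\nabla f_p$ is continuous on the compact $L^0_{f_p}$, Remark \ref{remark-partial02}(b) forces every cluster point to be critical, hence equal to $\bar x_p$; so \eqref{assumption-a} holds at $\bar x_p$. Finally, Proposition \ref{LM-lemmaAC}(iii) gives convexity of $f_p$ around $\bar x_p$, while the nondegeneracy from Proposition \ref{LM-lemmaAC}(i) (available for every $p$ here, since \eqref{ninnonde-asm} is in force when $p\in[1,2)$) makes $\bar x_p$ an isolated local minimizer and a local weak sharp minimizer of order $2$; thus \eqref{assumptipn-b01} holds. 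Corollary \ref{full-AG}, whose hypotheses we have now verified, together with assertion (ii) of Theorem \ref{full-1} (the isolated case, valid for any $\beta\in(0,1)$), then gives linear convergence to $\bar x_p$, the unique $L^p$ center of mass.

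The main obstacle is precisely the invariance step: because the step size is frozen at $t_0$ rather than chosen adaptively (as with the Armijo rule), one cannot assume a priori that the geodesic $\gamma_k([0,t_0])$ remains in the region $L^0_{f_p}$ where the Hessian bound $\lambda_p(x_0)$ is valid and where $f_p$ is $C^2$. The self-consistent Taylor/connectedness argument — decrease is guaranteed while inside $L^0_{f_p}$, and this decrease in turn prevents the iterate from leaving the sub-level set provided $t_0<2/\lambda_p(x_0)$ — is what closes this gap; the role of \eqref{indif} for $p\in[1,2)$ is to keep the singular points $y_i$ out of $L^0_{f_p}$ so that the required $C^2$-smoothness is not lost.
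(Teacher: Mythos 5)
Your proposal is correct and follows essentially the same route as the paper: the identical Taylor-expansion/sub-level-set invariance argument showing the iterates stay in $L^0_{f_p}$ and satisfy \eqref{GDA-2} with $\beta:=1-\tfrac{t_0\lambda_p(x_0)}{2}\in(0,1)$, thereby realizing Algorithm \ref{GA} as an instance of Algorithm \ref{GDA} with constant step sizes. The only cosmetic difference is that the paper finishes by citing Theorem \ref{THA-full-L-mass} as a black box, whereas you unpack its proof (Remark \ref{remark-partial02}(b), Proposition \ref{LM-lemmaAC}, Corollary \ref{full-AG} and Theorem \ref{full-1}(ii) in the isolated case), which is the same chain of results.
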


\begin{proof}
As noted earlier,
$f_p$ is $C^2$ on $L_{f_p}^0$, which particularly implies that $L_{f_p}^0\subset\mathcal{D}^1(f_p +\delta_{D_0})$.
Thus, to show the first assertion, it is sufficient to show  that $x_k\in L_{f_p}^0$ for each $k$.
Clearly, $x_0\in L_{f_p}^0$ 
by the choice of $x_0$. 
To proceed, suppose that $x_j\in L_{f_p}^0$ for some $j\in\IN$. 
Let $\gamma_j:[0,+\infty)\rightarrow M$ be the geodesic defined by \eqref{GDA-1}, and set
 $\bar t:=\sup\{t:\gamma_j(s)\in L_{f_p}^0\mbox{ for any }0\le s\le t\}$.
By  the Taylor   expansion and using the upper bound on the Hessian
of $f_p$ on $L_{f_p}^0$, 
we check that, for each $t\in (0,\bar t]$,
$$
\begin{array}{lll}
f_p(\gamma_j(t))&=f_p(x_j)-t\|\nabla f_p(x_j)\|^2+t^2\int_0^1(1-\tau)\langle\nabla^2f_p(\gamma_j(\tau t))\nabla f_p(x_j),\nabla f_p(x_j)\rangle d\tau\\
&\le f_p(x_j)-t\|\nabla f_p(x_j)\|^2+\frac{t^2\lambda_p(x_0)}{2}\|\nabla f_p(x_j)\|^2.
\end{array}
$$
Noting $f_p(x_j)\le f_p(x_0)$, it follows that
\begin{equation}\label{Taylor-LM}
f_p(\gamma_j(t))\le f_p(x_0)-t(1-\frac{t\lambda_p(x_0)}{2})\|\nabla f_p(x_j)\|^2\quad\mbox{for each }t\in [0,\bar t].
\end{equation}
This 
implies that  $
\bar t\ge \frac{2}{\lambda_p(x_0)}> t_0 $ by definition of $\bar t$ and continuinity of $f_p$. 
Therefore, $x_{j+1}:=\gamma_j(t_0)\in L_{f_p}^0$, and then, by mathematical induction,
$x_k\in L_{f_p}^0$ for each $k\in\IN$ as desired to show. Furthermore, \eqref{Taylor-LM} implies that the generated sequence $\{x_k\}$ by Algorithm \ref{GA} satifies
$$
f(\gamma_k(t_0))\le f(x_k)-t_0\beta\|\nabla f(x_k)\|^2\quad \mbox{for each }k\in\IN,
$$
where $\beta:=1-\frac{t_0\lambda_p(x_0)}{2}\in(0,1)$. This means that $\{x_k\}$ coincides with the sequence generated
by Algorithm {\rm\ref{GDA}} for solving problem \eqref{OP-CM} with initial point $x_0$ and
constant step sizes $\{t_k:=t_0\}$.
Note that $\{x_k\}\subset L_{f_p}^0$ and $L_{f_p}^0\subset\mathcal{D}^1(f_p +\delta_{D_0})$. Then, $\{x_k\}$ has a cluster point in $\mathcal{D}^1(f_p +\delta_{D_0})$ as $L_{f_p}^0$ is clearly blounded.
Furthermore, \eqref{indif} particularly implies \eqref{ninnonde-asm}. Thus, Theorem \ref{THA-full-L-mass} is applicable and
$\{x_k\}$ converges  linearly to the unique Riemannian $L^p$ center of mass of $\{y_i\}$. The proof is complete.
\end{proof}

The following corollary is new in the case when $p\in[1,2)$, and   was proved  in \cite[Theorem 4.1]{Afsari2013}
in the case when   $p\in[2,+\infty)$ under the assumption that  $\{y_i\}\subset\IB(o,\frac13 r_{cx})$ with
$r_{cx}:=\frac{1}{2}\min\{r_{\rm inj} (M),\frac{\pi}{\sqrt{\Delta_M}}\}$), which
 particularly implies the following assumption \eqref{jbjsh01A}  with $r_{cx}$ in place of $\rho$.

\begin{corollary}\label{Coro-full-L-massA}
Assume that 
\begin{equation}\label{jbjsh01A}
 \rho\le\varrho_p(\rho)  \quad \mbox{and}\quad \{y_i\}\subset \mathbb{U}\left(o,\frac{1}{3}\rho\right),
\end{equation}
and $\{y_i\}$ is   not colinear if  $p=1$.
Let $x_0\in \mathbb{U}(o,\frac{1}{3}\rho)$ be such that \eqref{indif}
holds for $p\in [1,2)$. 
Then, Algorithm {\rm\ref{GA}} for solving problem \eqref{OP-LP-def} 
with initial point $x_0$ and $t_0\in \left(0,\frac{2}{\lambda_p(x_0)}\right)$
is well defined and converges linearly to the unique Riemannian $L^p$ center of mass of $\{y_i\}$. 
\end{corollary}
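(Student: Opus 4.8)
The plan is to derive this corollary directly from Corollary \ref{Coro-full-L-mass-1} by checking that the stronger localization hypothesis \eqref{jbjsh01A} forces all the hypotheses of that corollary. Note first that $\mathbb{U}(o,\frac13\rho)\subset\mathbb{U}(o,\rho)$, so the containment $\{y_i\}\subset\mathbb{U}(o,\frac13\rho)$ immediately yields $\{y_i\}\subset\mathbb{U}(o,\rho)$; together with $\rho\le\varrho_p(\rho)$ this is precisely assumption \eqref{jbjsh01}. Moreover $x_0\in\mathbb{U}(o,\frac13\rho)\subset\mathbb{U}(o,\rho)=D_0$, and assumption \eqref{indif} for $p\in[1,2)$ is imposed directly. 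Hence the only nontrivial hypothesis of Corollary \ref{Coro-full-L-mass-1} that remains to be verified is the sub-level set inclusion \eqref{gcwdinu0}, namely $L^0_{f_p}\subset D_0=\mathbb{U}(o,\rho)$.

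First I would bound $f_p(x_0)$ from above. Since $x_0,y_i\in\mathbb{U}(o,\frac13\rho)$, the triangle inequality gives ${\rm d}(x_0,y_i)\le{\rm d}(x_0,o)+{\rm d}(o,y_i)<\frac23\rho$ for each $i\in I$, whence, using $\sum_{i}w_i=1$,
\begin{equation*}
f_p(x_0)=\frac{1}{p}\sum_{i\in I}w_i{\rm d}^p(x_0,y_i)<\frac{1}{p}\left(\frac23\rho\right)^p.
\end{equation*}
Next I would bound $f_p$ from below outside the ball: if ${\rm d}(x,o)\ge\rho$, then for each $i\in I$ the reverse triangle inequality gives ${\rm d}(x,y_i)\ge{\rm d}(x,o)-{\rm d}(o,y_i)>\rho-\frac13\rho=\frac23\rho$, so that
\begin{equation*}
f_p(x)=\frac{1}{p}\sum_{i\in I}w_i{\rm d}^p(x,y_i)>\frac{1}{p}\left(\frac23\rho\right)^p>f_p(x_0).
\end{equation*}
Consequently any $x$ with ${\rm d}(x,o)\ge\rho$ satisfies $f_p(x)>f_p(x_0)$ and therefore lies outside $L^0_{f_p}$; equivalently $L^0_{f_p}\subset\mathbb{U}(o,\rho)=D_0$, which is exactly \eqref{gcwdinu0}.

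With \eqref{jbjsh01}, \eqref{gcwdinu0}, $x_0\in D_0$ and (for $p\in[1,2)$) \eqref{indif} all in force, Corollary \ref{Coro-full-L-mass-1} applies verbatim and yields both the well-definedness of Algorithm \ref{GA} with step size $t_0\in(0,\frac{2}{\lambda_p(x_0)})$ and the linear convergence of $\{x_k\}$ to the unique Riemannian $L^p$ center of mass of $\{y_i\}$, which completes the proof. The argument reduces to two triangle-inequality estimates, so there is no serious analytic obstacle; the only delicate point is the bookkeeping of the radii, where the factor $\frac13$ is exactly what makes the upper bound $f_p(x_0)<\frac1p(\frac23\rho)^p$ and the lower bound $f_p(x)>\frac1p(\frac23\rho)^p$ (valid for ${\rm d}(x,o)\ge\rho$) meet, so that the sub-level set cannot escape $\mathbb{U}(o,\rho)$.
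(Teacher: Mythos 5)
Your proposal is correct and follows essentially the same route as the paper: both reduce the corollary to Corollary \ref{Coro-full-L-mass-1} and verify the sub-level set inclusion \eqref{gcwdinu0} via the triangle inequality, obtaining ${\rm d}(x_0,y_i)<\frac{2}{3}\rho<{\rm d}(x,y_i)$ for any $x$ outside $\mathbb{U}(o,\rho)$. Your explicit intermediate bound $\frac{1}{p}\left(\frac{2}{3}\rho\right)^p$ is just a slightly more detailed bookkeeping of the same comparison the paper makes pointwise.
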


\begin{proof} Note that  \eqref{jbjsh01} holds by   \eqref{jbjsh01A}. To apply Corollary \ref{Coro-full-L-mass-1}, we only need to show \eqref{gcwdinu0}. To do this, let  $z\in M\setminus\mathbb{U}(o,\rho)$. Then, we have by \eqref{jbjsh01A} and the choice of $x_0$  that
 ${\rm d}(x_0,y_i)<\frac{2\rho}{3}<{\rm d}(z,y_i)$ for each $i\in I$;
hence $f_p(x_0)<f_p(z)$ by definition (see \eqref{LP-def}). This means that $z\notin L^0_{f_p}$, establishing  \eqref{gcwdinu0} as $z\in M\setminus\mathbb{U}(o,\rho)$ is arbitrary. Thus, Corollary \ref{Coro-full-L-mass-1} is applicable to completing the proof.
\end{proof}




In the special case when $M$ is a Hadamard manifold, one checks by definition (see \eqref{Delta-p}) that $\varrho_p(r)=+\infty$ for each $r>0$.
Then, we can choose that $\rho:=+\infty$  so that  \eqref{jbjsh01} and \eqref{jbjsh01A} hold trivially. Thus, Corollary \ref{THA-full-L-masscH} follows
direct from
Corollaries \ref{THA-full-L-massc} and \ref{Coro-full-L-massA}.

%

\begin{corollary}\label{THA-full-L-masscH}
Assume that $M$ is a Hadamard manifold and $\{y_i\}$ is not colinear for  $p=1$,  and let $x_0\in M$. Then, the following assertions hold:

{\rm (i)} If \eqref{indif} holds for $p=1$, then  Algorithm {\rm\ref{GDA}} for solving problem \eqref{OP-LP-def} employing the Armijo step sizes  with initial point $x_0$ is well defined and the generated sequence $\{x_k\}$ converges to the unique Riemannian $L^p$ center of mass of $\{y_i\}$; and the
convergence rate is at least linear if \eqref{ninnonde-asm} is additionally  assumed for $p\in(1,2)$.

{\rm (ii)} If \eqref{indif} holds for $p\in[1,2)$ and $t_0\in \left(0,\frac{2}{\lambda_p(x_0)}\right)$, then Algorithm {\rm\ref{GA}} for solving problem \eqref{OP-LP-def}
 with initial point $x_0$ is well defined and converges linearly to the unique Riemannian $L^p$ center of mass of $\{y_i\}$. 
\end{corollary}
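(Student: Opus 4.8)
The plan is to reduce everything to the two corollaries already established for a general $M$ with a finite radius $\rho$, by checking that a Hadamard manifold is precisely the setting in which one may legitimately take $\rho := +\infty$. First I would record the two structural facts that make this work: on a Hadamard manifold the sectional curvatures are nonpositive, so every upper bound $\Delta_{\IB(o,2r)}$ may be taken $\le 0$, whence $\frac{\pi}{\sqrt{\Delta_{\IB(o,2r)}}} = +\infty$ by the convention stated after \eqref{Delta-p}; and $r_{\rm inj}(x) = r_{\rm cvx}(x) = +\infty$ for every $x$, as recalled in Section 2, so that $r_{\rm inj}(\IB(o,2r)) = +\infty$ as well. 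Feeding these into the definition \eqref{Delta-p} of $\varrho_p$ yields $\varrho_p(r) = +\infty$ for every $r > 0$, uniformly in $p \in [1,+\infty)$.

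With $\varrho_p \equiv +\infty$ in hand, I would set $\rho := +\infty$ and verify that the standing hypotheses of the ambient corollaries become vacuous. Indeed \eqref{jbjsh01} reads $\rho \le \varrho_p(\rho)$, i.e. $+\infty \le +\infty$, while $\{y_i\} \subset \mathbb{U}(o,\rho) = M$ is automatic; the same remark disposes of \eqref{jbjsh01A}, since $\mathbb{U}(o,\frac{1}{3}\rho) = M$ contains both $\{y_i\}$ and $x_0$. The crucial bookkeeping point is that $D_0 := \mathbb{U}(o,\rho) = M$, so the indicator $\delta_{D_0}$ vanishes identically and the constrained problem \eqref{OP-CM} coincides verbatim with the unconstrained problem \eqref{OP-LP-def}. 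At this stage assertion (i) is a direct application of Corollary \ref{THA-full-L-massc} (Armijo step sizes) and assertion (ii) of Corollary \ref{Coro-full-L-massA} (constant step sizes), with the non-colinearity hypothesis for $p=1$ carried over unchanged and \eqref{indif} supplying the required instance of \eqref{p1n}.

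The only point that is not pure specialization, and hence the one I would treat with care, is the mismatch between the index ranges in the linear-rate clauses: Corollary \ref{THA-full-L-massc} grants linear convergence whenever \eqref{ninnonde-asm} holds for $p \in [1,2)$, yet assertion (i) advertises the linear rate only for $p \in (1,2)$. I would reconcile this by observing that the endpoint $p = 1$ is already covered: the hypothesis \eqref{indif}, namely $f_1(x_0) < \min_{i\in I} f_1(y_i)$, together with the trivial bound $\min_{x\in M} f_1(x) \le f_1(x_0)$, forces $\min_{x\in M} f_1(x) < \min_{i\in I} f_1(y_i)$, which is exactly \eqref{ninnonde-asm}. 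Thus for $p = 1$ the linear rate comes for free from \eqref{indif} and need not be listed as a separate hypothesis, so the range $(1,2)$ in assertion (i) is an economy rather than an omission. No genuine obstacle arises beyond this indexing subtlety, since all the analytic work has already been carried out in the two source corollaries.
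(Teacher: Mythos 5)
Your proof is correct and takes essentially the same route as the paper: observe that $\varrho_p(r)=+\infty$ for every $r>0$ on a Hadamard manifold, take $\rho:=+\infty$ so that \eqref{jbjsh01} and \eqref{jbjsh01A} hold trivially and $D_0=\mathbb{U}(o,\rho)=M$ (making \eqref{OP-CM} coincide with \eqref{OP-LP-def}), and then invoke Corollaries \ref{THA-full-L-massc} and \ref{Coro-full-L-massA}. Your additional observation that for $p=1$ the hypothesis \eqref{indif} already forces \eqref{ninnonde-asm} (since $\min_{x\in M}f_1(x)\le f_1(x_0)$), which explains why assertion (i) needs \eqref{ninnonde-asm} only for $p\in(1,2)$, is correct and makes explicit a detail the paper leaves implicit.
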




\end{CJK*}

\end{document}